\newtheorem{thm}{Theorem}[section]
\newtheorem{cor}[thm]{Corollary}
\newtheorem{lem}[thm]{Lemma}
\newtheorem{prop}[thm]{Proposition}
\newtheorem{bem}[thm]{Remark}
\newcommand{\sgn}{\operatorname{sgn}}
\newcommand{\dom}{\operatorname{dom}}
\newcommand{\supp}{\operatorname{supp}}
\numberwithin{equation}{section}
\renewcommand\and{\qquad\text{and}\qquad}
\newcommand\sm{\setminus}
\newcommand\fra{\mathfrak{a}}
\newcommand\dl{\delta}
\newcommand{\comm}[1]{}
\renewcommand\aa{\alpha}
\newcommand\lm{\lambda}
\newcommand\s{\sigma}
\newcommand\omg{\omega}
\newcommand\arr{\rightarrow}
\newcommand\sess{\sigma_{\rm ess}}
\newcommand\dd{{\mathsf{d}}}
\newcounter{counter_a}
\newenvironment{myenum}{\begin{list}{{\rm(\roman{counter_a})}}%
{\usecounter{counter_a}
\setlength{\itemsep}{1.ex}\setlength{\topsep}{0.8ex}
\setlength{\leftmargin}{5ex}\setlength{\labelwidth}{5ex}}}{\end{list}}
\newcommand\wt{\widetilde}
      \def\dC{{\mathbb C}}
      \def\dR{{\mathbb R}}
\def\sfA{{\mathsf A}}   \def\sfB{{\mathsf B}}   
\def\sfD{{\mathsf D}}      
      \def\sfI{{\mathsf I}}
   \def\sfT{{\mathsf T}}   
   \def\sfW{{\mathsf W}}
\def\cA{{\mathcal A}}   \def\cB{{\mathcal B}}   
      \def\cF{{\mathcal F}}
\def\section{\@startsection{section}{1}\z@{.9\linespacing\@plus\linespacing}%
	{.7\linespacing} {\fontsize{13}{14}\selectfont\bfseries\centering}}
\def\paragraph{\@startsection{paragraph}{4}%
	\z@{0.3em}{-.5em}%
	{$\bullet$ \ \normalfont\itshape}}
\title[]{Schr\"odinger operators with \boldmath{$\dl$}-potentials supported on unbounded Lipschitz hypersurfaces}
\author[J. Behrndt]{Jussi Behrndt}
\address{(JB) Institute of Applied Mathematics, Graz University of Technology, 8010 Graz, Austria}
\email{behrndt@tugraz.at}
\author[V. Lotoreichik]{Vladimir Lotoreichik}
\address{(VL) Department of Theoretical Physics, Nuclear Physics Institute, Czech Academy of Sciences, 25068 \v{R}e\v{z}, Czech Republic}
\email{lotoreichik@ujf.cas.cz }
\author[P. Schlosser]{Peter Schlosser}
\address{(PS) Institute of Applied Mathematics, Graz University of Technology, 8010 Graz, Austria}
\email{schlosser@tugraz.at}
\begin{document}

\maketitle

\dedicatory{\hspace{6ex}\it Dedicated to the memory of our friend and colleague Sergey Naboko}

\begin{abstract}
In this note we consider the self-adjoint Schr\"odinger operator $\sfA_\aa$ in $L^2(\dR^d)$, $d\geq 2$, with a $\delta$-potential supported on a Lipschitz hypersurface $\Sigma\subseteq\mathbb{R}^d$ of strength $\aa\in L^p(\Sigma)+L^\infty(\Sigma)$.
We show the uniqueness of the ground state and, under some additional
conditions on the coefficient $\alpha$ and the hypersurface $\Sigma$, we determine the essential spectrum of $\sfA_\aa$. In the special case that $\Sigma$ is a hyperplane we 
obtain a Birman-Schwinger principle with a relativistic Schr\"{o}dinger operator as 
Birman-Schwinger operator. As an application we prove an optimization result for the bottom of
the spectrum of $\sfA_\aa$.
\end{abstract}

\section{Introduction}

In this paper we are interested in spectral properties of a 
class of self-adjoint operators $\sfA_\aa$ with singular $\delta$-potentials 
in the Hilbert 
space $L^2(\dR^d)$, $d\ge 2$, which correspond to the formal differential expression
\begin{equation}\label{formal}
-\Delta-\aa\,\delta(x-\Sigma),
\end{equation}
where $\Sigma\subset\dR^d$ is the graph of a Lipschitz function $\xi:\dR^{d-1}\rightarrow\dR$ and $\aa\colon\Sigma\arr\dR$ is the strength of the $\delta$-potential; cf. \cite{BLL13, BEKS94}, the monograph~\cite{EK} and the references therein. Note that 
the unbounded Lipschitz surface $\Sigma$ splits $\dR^d$ into two unbounded disjoint parts and that the special choice $\xi=0$ corresponds to the situation where $\Sigma$ is the hyperplane in $\dR^d$. Assuming $\alpha\in L^p(\Sigma)+L^\infty(\Sigma)$ for some $1<p<\infty$ in $d=2$ and for $d-1\leq p<\infty$ in $d\geq 3$ dimensions 
we define $\sfA_\aa$ as the semibounded self-adjoint operator in $L^2(\dR^d)$
associated with the densely defined, symmetric, semibounded, 
and closed form 
\begin{equation}\label{Eq_Schroedinger_form}
\begin{split}
\mathfrak{a}_\alpha[u,v]&\coloneqq(\nabla u,\nabla v)_{L^2(\mathbb{R}^d;{\mathbb{C}^d})}-\int_{\Sigma}\alpha\,\tau_{\rm D}u\,\overline{\tau_{\rm D}v}\,\dd x, \\
\dom\mathfrak{a}_\alpha&\coloneqq H^1(\mathbb{R}^d),
\end{split}
\end{equation}
where $\tau_{\rm D}:H^1(\mathbb{R}^d)\rightarrow H^{1/2}(\Sigma)$ is the Dirichlet trace operator. Let us denote the bottom of the spectrum of $\sfA_\alpha$ by
\begin{equation}\label{eq:lowest}
\lambda_1(\alpha)\coloneqq\inf\sigma(\sfA_\alpha).
\end{equation}

The first issue we discuss in this paper is the essential spectrum of the self-adjoint operator $\sfA_\aa$. 
In the present situation one always has the inclusion $[0,\infty)\subset \sess(\sfA_\alpha)$ and in Theorem \ref{thm:Essential_spectrum}
we prove that if $\Sigma$ is a local deformation of the hyperplane $\dR^{d-1}\times \{0\}$ and $\alpha$ is close to 
a constant $\alpha_0$ outside of sets of finite measure, then
\begin{equation*}
\sess(\sfA_\alpha)=\left\{\begin{array}{ll} [-\frac{\alpha_0^2}{4},\infty), & \text{if }\alpha_0\geq 0, \\ {[0,\infty)}, & \text{if }\alpha_0\leq 0; \end{array}\right.
\end{equation*}
see also \cite{R19} for related results. Next we investigate 
the uniqueness of the ground state of $\sfA_\aa$, which is a typical property for Schr\"{o}dinger operators $-\Delta+V$ with {\it regular} potentials.
More precisely, if $\lambda_1(\alpha)$ in \eqref{eq:lowest} is a discrete eigenvalue then 
it will be shown in Section~\ref{sec_AppA} that $\lambda_1(\alpha)$ is simple and the corresponding 
eigenfunction can be chosen strictly positive 
on $\mathbb{R}^d\setminus\Sigma$; this observation is based on a standard argument using Harnack's inequality.

In Section~\ref{sec_BS} we focus on the special case that $\Sigma$ is the hyperplane and we 
obtain a Birman-Schwinger principle, where the Birman-Schwinger operator is a relativistic 
Schr\"{o}dinger operator in $L^2(\dR^{d-1})$. The operators appearing in this context can also be viewed as (extensions of) the 
$\gamma$-field and Weyl function corresponding to a certain quasi boundary triple; cf. \cite[Section~8]{BLLR18} for more details. Under the additional assumption that $\alpha$ is close to 
a constant outside of sets of finite measure we 
then provide a more detailed analysis of the spectrum
of the Birman-Schwinger operator and link these spectral properties to those of $\sfA_\aa$.
As an interesting application we prove an optimization result 
for the bottom of the spectrum of $\sfA_\alpha$ which is formulated in terms of the so-called  
\textit{symmetric decreasing rearrangement}: Consider again a real-valued $\aa\in L^\infty(\dR^{d-1})+L^p(\dR^{d-1})$ 
for some $1<p<\infty$ in $d=2$ and for $d-1\leq p<\infty$ in $d\geq 3$ dimensions, and assume that $\alpha$ is close to 
a constant $\alpha_0\in\mathbb{R}$ outside of sets of finite measure. Furthermore, let $\aa_1\coloneqq\aa-\aa_0$ and $(\aa_1)_+=\max\{\aa_1,0\}$, and let $(\aa_1)_+^*$ be 
the symmetric decreasing rearrangement of $(\aa_1)_+$ defined in \eqref{Eq_Symmetric_decreasing_rearrangement}. Then we have the inequality
\begin{equation}\label{opti}
\lm_1(\aa_0+(\aa_1)_+^*)\le \lm_1(\aa_0+\aa_1).
\end{equation}
Our proof of \eqref{opti} relies on the fact that the symmetric decreasing rearrangement decreases the kinetic energy term corresponding to the relativistic Schr\"odinger operator. This property of the kinetic energy can be viewed as an analogue of the P\'{o}lya-Szeg\H{o} inequality. We note that a different argument for 
\eqref{opti} based on Steiner symmetrization was communicated to us; cf. Remark~\ref{otherbem} for more details.
We wish to mention that eigenvalue optimization is a trademark topic in spectral theory; see the monographs~\cite{Henrot,Henrot2} and the references therein. In particular,
optimization of eigenvalues induced by $\dl$-potentials supported on hypersurfaces is a topic of growing interest~\cite{E05, EHL06, EL17, L19}. There are also closely related works on eigenvalue optimization for $\dl$-potentials supported on sets of higher co-dimension~\cite{BFKLR17, EK19}, for the Robin Laplacian~\cite{AFK17, B86, BFNT18, D06, FK15, FL18, GL19, KL19, KL20}, for $\dl'$-interactions~\cite{L18} and for Dirac operators with surface 
interactions~\cite{ABLO20,AMV16}.

 \vskip 0.2cm\noindent\\
    {\bf Acknowledgements}. 
    J. Behrndt gratefully acknowledges financial support by the Austrian Science Fund (FWF): P 33568-N.
    V. Lotoreichik was supported 
    	by the Czech Science Foundation project 21-07129S. 
    	This publication is based upon work from COST Action CA 18232 
MAT-DYN-NET, supported by COST (European Cooperation in Science and 
Technology), www.cost.eu.

\section{The Schrödinger operator with $\delta$-potential supported on a Lipschitz graph}\label{sec_Schroedinger_operator}

In this section let $d\geq 2$ and 
\begin{equation}\label{Eq_Sigma}
\Sigma\coloneqq\Set{(x,\xi(x)) | x\in\mathbb{R}^{d-1}}\subset\mathbb{R}^d
\end{equation}
be the graph of a Lipschitz function $\xi:\mathbb{R}^{d-1}\rightarrow\mathbb{R}$. Furthermore, let
\begin{equation}\label{Eq_alpha}
\alpha\in L^p(\Sigma)+L^\infty(\Sigma)
\end{equation}
be a real-valued function with $1<p<\infty$ in $d=2$ and $d-1\leq p<\infty$ in $d\geq 3$ dimensions. In this setting we will define the self-adjoint operator $\sfA_\alpha$ associated to the form \eqref{Eq_Schroedinger_form} and study its essential spectrum. In particular, under some additional flatness assumptions on the support $\Sigma$ and some decay at infinity of the coefficient $\alpha$ we explicitly compute $\sigma_\text{ess}(\sfA_\alpha)$. Furthermore, we verify that the ground state $\lambda_1(\alpha)$ (if it is a discrete eigenvalue) is simple. 

\subsection{The form $\mathfrak{a}_\alpha$ and the operator $\sfA_\alpha$}

In this subsection we will prove that the form \eqref{Eq_Schroedinger_form}, which models a $\delta$-potential of strength $\alpha$ supported on $\Sigma$, is well defined and gives rise to a self-adjoint operator $\sfA_\alpha$ in $L^2(\mathbb{R}^d)$; cf. \cite{BEKS94,FL08} and \cite[Proposition~3.8]{BS19}. In the following the Dirichlet trace operator $\tau_{\rm D}$ in \eqref{Eq_Schroedinger_form} is viewed for $\frac{1}{2}<s<\frac{3}{2}$ as a bounded operator
\begin{equation}\label{Eq_Trace_Sobolev_boundedness}
\tau_{\rm D}\colon H^s(\mathbb{R}^d)\rightarrow H^{s-\frac{1}{2}}(\Sigma);
\end{equation}
cf. \cite[Proof of Theorem  3.38]{McLean}.

\begin{prop}\label{prop_Closed_form}
The form $\mathfrak{a}_\alpha$ in \eqref{Eq_Schroedinger_form} is densely defined, symmetric, semibounded, 
and closed in $L^2(\dR^d)$.
\end{prop}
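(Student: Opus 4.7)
My plan is to invoke the KLMN theorem (form version of Kato's perturbation result). The first summand $(\nabla u,\nabla v)_{L^2(\mathbb{R}^d;\mathbb{C}^d)}$ is the well-known closed, symmetric, non-negative form of the free Laplacian on $H^1(\mathbb{R}^d)$, which is dense in $L^2(\mathbb{R}^d)$. The second summand is clearly symmetric because $\alpha$ is real-valued. Hence the entire question reduces to showing that the trace-perturbation form
\[
\mathfrak{b}_\alpha[u]\coloneqq\int_\Sigma \alpha\,|\tau_{\rm D}u|^2\,\dd x
\]
is relatively form-bounded with respect to $\|\nabla\cdot\|_{L^2}^2$ with relative bound $0$; once this is established, KLMN delivers semiboundedness and closedness on the same domain $H^1(\mathbb{R}^d)$ automatically.

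To prove this infinitesimal form bound I would first split $\alpha=\alpha_p+\alpha_\infty$ with $\alpha_p\in L^p(\Sigma)$ and $\alpha_\infty\in L^\infty(\Sigma)$ and estimate the two pieces separately. The $L^\infty$-part is handled directly via $\|\tau_{\rm D}u\|_{L^2(\Sigma)}^2$. For the $L^p$-part, Hölder's inequality gives
\[
\Big|\int_\Sigma \alpha_p |\tau_{\rm D}u|^2\,\dd x\Big|\leq \|\alpha_p\|_{L^p(\Sigma)}\,\|\tau_{\rm D}u\|_{L^{2p'}(\Sigma)}^2,\qquad p'=\tfrac{p}{p-1}.
\]
Next I would combine the Sobolev embedding $H^{s-1/2}(\Sigma)\hookrightarrow L^{2p'}(\Sigma)$ on the $(d-1)$-dimensional Lipschitz hypersurface $\Sigma$ with the trace theorem \eqref{Eq_Trace_Sobolev_boundedness}, tuning $s\in(\tfrac12,1]$ so that the Sobolev exponent works out: in $d\geq 3$ the choice $s=\tfrac12+\tfrac{d-1}{2p}$ is admissible precisely under the hypothesis $p\geq d-1$, and in $d=2$ any $s\in(\tfrac12,1)$ suffices. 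This yields
\[
\|\tau_{\rm D}u\|_{L^{2p'}(\Sigma)}\leq C\,\|u\|_{H^s(\mathbb{R}^d)}.
\]
Finally, complex interpolation gives $\|u\|_{H^s}\leq \|u\|_{H^1}^s\|u\|_{L^2}^{1-s}$, and Young's inequality $ab\leq \varepsilon a^{1/s}+C_\varepsilon b^{1/(1-s)}$ produces, for every $\varepsilon>0$,
\[
|\mathfrak{b}_\alpha[u]|\leq \varepsilon\,\bigl(\|\nabla u\|_{L^2}^2+\|u\|_{L^2}^2\bigr)+C_\varepsilon\|u\|_{L^2}^2,
\]
which is exactly the infinitesimal form-boundedness needed.

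The main subtlety is the borderline case $p=d-1$ in dimensions $d\geq 3$, where the naive computation forces $s=1$ and Young's inequality no longer gives a free small prefactor. To treat this endpoint I would approximate: write $\alpha_p=\alpha_p^{(n)}+r_n$ with $\alpha_p^{(n)}\in L^\infty(\Sigma)$ (e.g. the truncation of $\alpha_p$ at level $n$) and $\|r_n\|_{L^p(\Sigma)}\to 0$ as $n\to\infty$. Putting $\alpha_p^{(n)}$ into the $L^\infty$-part and applying the continuous embedding $\tau_{\rm D}\colon H^1(\mathbb{R}^d)\to L^{2p'}(\Sigma)$ (via $H^{1/2}(\Sigma)\hookrightarrow L^{2(d-1)/(d-2)}(\Sigma)$) to the small remainder $r_n$ yields the $\varepsilon$-bound with $\varepsilon=C\|r_n\|_{L^p(\Sigma)}$.

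Having established the infinitesimal form bound, the KLMN theorem (e.g.\ \cite[Thm.~VI.1.33]{Kato}) immediately implies that $\mathfrak{a}_\alpha=\|\nabla\cdot\|_{L^2}^2-\mathfrak{b}_\alpha$ is closed and semibounded on $\dom\mathfrak{a}_\alpha=H^1(\mathbb{R}^d)$; density in $L^2(\mathbb{R}^d)$ and symmetry are trivial, finishing the proof. The only step I expect to require real care is the borderline $p=d-1$, as just described.
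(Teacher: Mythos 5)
Your proposal is correct and takes essentially the same route as the paper: an infinitesimal form bound for the surface term via Hölder's inequality, the Sobolev embedding of $H^{1/2}(\Sigma)$ (resp. $H^{s-1/2}(\Sigma)$) into $L^{2p/(p-1)}(\Sigma)$, the trace theorem, an $\varepsilon$-type interpolation inequality, and then the KLMN theorem (Kato, Thm.~VI.1.33). The only organizational difference is that the paper gains the small prefactor by truncating the $L^p$-part of $\alpha$ (its Lemma~\ref{lem_Malpha_boundedness}), so the fixed embedding $H^{1/2}(\Sigma)\hookrightarrow L^{2p/(p-1)}(\Sigma)$ handles all admissible $p$ at once — precisely the device you reserve for the borderline case $p=d-1$.
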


\begin{proof}
It is clear, that $\dom\mathfrak{a}_\alpha=H^1(\mathbb{R}^d)$ is dense in $L^2(\mathbb{R}^d)$. Furthermore, we split $\mathfrak{a}_\alpha$ into
\begin{align*}
\mathfrak{a}_0[u,v]&\coloneqq(\nabla u,\nabla v)_{L^2(\mathbb{R}^d;\mathbb{C}^d)},&\text{with}&&\dom\mathfrak{a}_0&\coloneqq H^1(\mathbb{R}^d), \\
\mathfrak{a}_1[u,v]&\coloneqq-\int_\Sigma\alpha\,\tau_{\rm D}u\,\overline{\tau_{\rm D}v}\,\dd x,&\text{with}&&\dom\mathfrak{a}_1&\coloneqq H^1(\mathbb{R}^d).
\end{align*}
Observe that $\mathfrak{a}_0$ is densely defined, nonnegative, and closed in $L^2(\dR^d)$. Furthermore, since $\alpha$ is real-valued it is clear that $\mathfrak{a}_1$ is symmetric. The estimate \eqref{Eq_Malpha_boundedness} shows that for every $\varepsilon>0$ there exists some $c_\varepsilon\geq 0$, such that
\begin{equation*}
\big|\mathfrak{a}_1[u,u]\big|\leq\varepsilon^2\Vert\tau_{\rm D}u\Vert^2_{H^{\frac{1}{2}}(\Sigma)}+c_\varepsilon^2\Vert\tau_{\rm D}u\Vert^2_{L^2(\Sigma)},\qquad u\in H^1(\mathbb{R}^d).
\end{equation*}
Using the boundedness \eqref{Eq_Trace_Sobolev_boundedness} of the trace operator, the absolute value of $\mathfrak{a}_1[u,u]$ can further be estimated by
\begin{equation*}
\big|\mathfrak{a}_1[u,u]\big|\leq\varepsilon^2d_1^2\Vert u\Vert^2_{H^1(\mathbb{R}^d)}+c_\varepsilon^2d_s^2\Vert u\Vert^2_{H^s(\mathbb{R}^d)},\qquad u\in H^1(\mathbb{R}^d),
\end{equation*}
where $d_1$ and $d_s$ are the operator norms \eqref{Eq_Trace_Sobolev_boundedness} with $s=1$ and  some fixed 
$s\in(\frac{1}{2},1)$, respectively. Since $s<1$, we can use \cite[Theorem 3.30]{HT08} to find a constant $\tilde{c}_\varepsilon\geq 0$ with
\begin{equation*}
\big|\mathfrak{a}_1[u,u]\big|\leq\varepsilon^2(d_1^2+1)\Vert u\Vert^2_{H^1(\mathbb{R}^d)}+\tilde{c}_\varepsilon^2\Vert u\Vert^2_{L^2(\mathbb{R}^d)},\qquad u\in H^1(\mathbb{R}^d).
\end{equation*}
That is, the form $\mathfrak{a}_1$ is $\mathfrak{a}_0$-bounded with form bound $0$. The semiboundedness and closedness of $\mathfrak{a}_\alpha=\mathfrak{a}_0+\mathfrak{a}_1$ now follow from \cite[Chapter {VI}, Theorem 1.33]{Ka1976}.
\end{proof}

Proposition \ref{prop_Closed_form} combined with the First Representation Theorem \cite[Chapter~VI, Theorem~2.1]{Ka1976} implies that there is a unique self-adjoint operator $\sfA_\alpha$ in $L^2(\dR^d)$ representing the form $\mathfrak{a}_\alpha$ in
the sense that $\dom \sfA_\alpha\subset \dom\fra_\aa$ and 
\begin{equation}\label{dada}
(\sfA_\alpha f,g)_{L^2(\mathbb{R}^d)}=\mathfrak{a}_\alpha[f,g],\quad f\in\dom \sfA_\alpha,\, g\in \dom\fra_\aa.
\end{equation}

\subsection{Essential spectrum of $\sfA_\alpha$}

In this subsection we investigate the essential spectrum of $\sfA_\alpha$. The following preparatory lemma shows that in the present situation the essential spectrum of $\sfA_\alpha$ always covers the nonnegative real axis.

\begin{lem}\label{lemmilein}
For any $\alpha$ of the form \eqref{Eq_alpha} we have
\begin{equation}\label{Eq_Essential_spectrum_inclusion2}
[0,\infty)\subseteq\sess(\sfA_\alpha).
\end{equation}
\end{lem}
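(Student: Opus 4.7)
The plan is to establish the inclusion via Weyl's criterion: for each $\lm \geq 0$ I would construct a singular sequence $(u_n) \subset \dom \sfA_\aa$ with $\|u_n\|_{L^2(\dR^d)} = 1$, $u_n \rightharpoonup 0$ weakly in $L^2(\dR^d)$, and $\|(\sfA_\aa - \lm) u_n\|_{L^2(\dR^d)} \to 0$. The natural candidates are rescaled plane-wave wavepackets supported in one connected component of $\dR^d \setminus \Sigma$ with supports drifting to infinity.

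First, I would use the Lipschitz graph structure to produce balls $B_n := B(x_n, R_n)$ with $R_n \to \infty$, $\overline{B_n} \subset \dR^d \setminus \Sigma$, and $\operatorname{dist}(B_n, 0) \to \infty$. Writing $L$ for a Lipschitz constant of $\xi$, the choice $x_n := (0, n^2) \in \dR^{d-1} \times \dR$ and $R_n := n$ does the job: for $(x', x_d) \in B_n$ one has $|x'| \leq n$ and $x_d \geq n^2 - n$, so
\begin{equation*}
x_d - \xi(x') \geq n^2 - n - \xi(0) - L n > 0
\end{equation*}
for $n$ large, and $\operatorname{dist}(B_n, 0) \geq n^2 - n \to \infty$.

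Next, fix $k \in \dR^d$ with $|k|^2 = \lm$ and a function $\chi \in C_c^\infty(\dR^d)$ with $\supp \chi \subseteq B(0,1)$ and $\|\chi\|_{L^2(\dR^d)} = 1$. Set
\begin{equation*}
u_n(x) := R_n^{-d/2} \chi\big(R_n^{-1}(x - x_n)\big) e^{\ii k \cdot x}, \qquad x \in \dR^d.
\end{equation*}
A change of variables gives $\|u_n\|_{L^2(\dR^d)} = 1$, and since $\supp u_n \subseteq B_n$ stays away from $\Sigma$, one has $\tau_{\rm D} u_n = 0$. Integration by parts then yields
\begin{equation*}
\fra_\aa[u_n, g] = (\nabla u_n, \nabla g)_{L^2(\dR^d;\dC^d)} = (-\Delta u_n, g)_{L^2(\dR^d)}, \qquad g \in H^1(\dR^d),
\end{equation*}
so by \eqref{dada} we obtain $u_n \in \dom \sfA_\aa$ with $\sfA_\aa u_n = -\Delta u_n$.

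A Leibniz-rule computation gives
\begin{equation*}
(\sfA_\aa - \lm) u_n(x) = -R_n^{-d/2} \Big(R_n^{-2} (\Delta \chi)\big(R_n^{-1}(x-x_n)\big) + 2\ii R_n^{-1} k \cdot (\nabla \chi)\big(R_n^{-1}(x-x_n)\big)\Big) e^{\ii k \cdot x},
\end{equation*}
whose $L^2$-norm, after rescaling, is bounded by $C(R_n^{-2} + R_n^{-1}) \to 0$. For weak convergence, Cauchy--Schwarz yields $|(u_n, \phi)_{L^2(\dR^d)}| \leq \|\phi\|_{L^2(B_n)}$ for every $\phi \in L^2(\dR^d)$, and the right-hand side tends to zero by dominated convergence because $B_n$ escapes every compact set. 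The main obstacle is the geometric first step: one must fit balls of diverging radii into one component of $\dR^d \setminus \Sigma$, which is where the Lipschitz (rather than merely continuous) character of $\xi$ is essential.
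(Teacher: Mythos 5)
Your proof is correct and follows essentially the same route as the paper: construct Weyl singular sequences of smooth functions supported in $\dR^d\setminus\Sigma$, observe that $\tau_{\rm D}u_n=0$ forces $\sfA_\aa u_n=-\Delta u_n$ via the form identity \eqref{dada}, and conclude via Weyl's criterion. The only differences are cosmetic: the paper delegates the construction of the singular sequence to a cited reference and then invokes closedness of the essential spectrum to capture $\lambda=0$, whereas you carry out the drifting-wavepacket construction explicitly (which also handles $\lambda=0$ directly by taking $k=0$); note only that mere continuity of $\xi$, not its Lipschitz character, already suffices for placing the balls in $\Omega_+$.
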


\begin{proof}
In a similar way as in the proof of \cite[Theorem 6.5]{EE18} one constructs for 
$\lambda\in(0,\infty)$ an orthonormal sequence $(\Psi_n)_n\in\mathcal{C}_0^\infty(\mathbb{R}^d)$ with support in $\mathbb{R}^d\setminus\Sigma$ and
\begin{equation*}
\Vert(-\Delta-\lambda)\Psi_n\Vert_{L^2(\mathbb{R}^d)}\overset{n\rightarrow\infty}{\longrightarrow}0.
\end{equation*}
From $\supp\Psi_n\subseteq\mathbb{R}^d\setminus\Sigma$ we have $\tau_{\rm D}\Psi_n=0$ and hence it follows from 
\eqref{Eq_Schroedinger_form} that $\sfA_\alpha\Psi_n=-\Delta\Psi_n$. This implies
\begin{equation*}
\Vert(\sfA_\alpha-\lambda)\Psi_n\Vert_{L^2(\mathbb{R}^d)}\overset{n\rightarrow\infty}{\longrightarrow}0,
\end{equation*}
so that $(\Psi_n)_n$ is a singular sequence and we conclude $\lambda\in\sess(\sfA_\alpha)$. This proves that $(0,\infty)\subseteq\sess(\sfA_\alpha)$ and since the essential spectrum is closed we obtain \eqref{Eq_Essential_spectrum_inclusion2}.
\end{proof}

For a subclass of hypersurfaces $\Sigma$, which are local deformations of a hyperplane, and interaction strengths $\alpha$ having a certain decay at infinity, we are able to determine the essential spectrum explicitly.

\begin{thm}\label{thm:Essential_spectrum}
If the function $\xi:\mathbb{R}^{d-1}\rightarrow\mathbb{R}$ in \eqref{Eq_Sigma} is compactly supported
and if for some $\alpha_0\in\mathbb{R}$
\begin{equation}\label{Eq_Decay_property_potential}
\Set{x\in\Sigma | \vert\alpha(x)-\alpha_0\vert>\varepsilon}\text{ has finite measure for every }\varepsilon>0,
\end{equation}
then the essential spectrum of the corresponding Schrödinger operator $\sfA_\alpha$ is given by
\begin{equation}\label{Eq_Essential_spectrum}
\sess(\sfA_\alpha)=\left\{\begin{array}{ll} [-\frac{\alpha_0^2}{4},\infty), & \text{if }\alpha_0\geq 0, \\ {[0,\infty)}, & \text{if }\alpha_0\leq 0. \end{array}\right.
\end{equation}
\end{thm}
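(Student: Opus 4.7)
The plan is to compare $\sfA_\alpha$ with the reference operator $\sfA^{\rm ref}$ associated via the form \eqref{Eq_Schroedinger_form} with the flat hyperplane $\Sigma_0\coloneqq\mathbb{R}^{d-1}\times\{0\}$ and the constant interaction strength $\alpha_0$. Via the tensor factorization $L^2(\mathbb{R}^d)\simeq L^2(\mathbb{R}^{d-1})\otimes L^2(\mathbb{R})$ one obtains $\sfA^{\rm ref}=(-\Delta_{x'})\otimes I+I\otimes T_{\alpha_0}$, where $T_{\alpha_0}\coloneqq -\partial_y^2-\alpha_0\delta(y)$ on $L^2(\mathbb{R})$ is the classical one-dimensional $\delta$-point interaction, whose spectrum is $\{-\alpha_0^2/4\}\cup[0,\infty)$ for $\alpha_0>0$ (with eigenfunction $e^{-\alpha_0|y|/2}$) and $[0,\infty)$ for $\alpha_0\le 0$. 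The tensor-sum formula then identifies $\sess(\sfA^{\rm ref})=\sigma(\sfA^{\rm ref})$ with the right-hand side of \eqref{Eq_Essential_spectrum}. The theorem follows once I show $\sess(\sfA_\alpha)=\sess(\sfA^{\rm ref})$, which I prove via two separate inclusions.

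For the inclusion $\sess(\sfA^{\rm ref})\subseteq\sess(\sfA_\alpha)$, Lemma~\ref{lemmilein} already yields $[0,\infty)$, so only the additional interval $[-\alpha_0^2/4,0)$ in the case $\alpha_0>0$ needs to be addressed. For each $\lambda=-\alpha_0^2/4+|k|^2$ with $k\in\mathbb{R}^{d-1}$, I would construct a Weyl sequence of the form
\begin{equation*}
\Psi_n(x',x_d)\coloneqq c_n\,\chi_n(x')\,e^{\ii k\cdot x'}\,e^{-\alpha_0|x_d|/2},
\end{equation*}
with $c_n>0$ a normalization constant and $\chi_n\in C_c^\infty(\mathbb{R}^{d-1})$ a smooth cut-off supported in a ball of radius $R_n\to\infty$ around points $y_n\in\mathbb{R}^{d-1}$ with $|y_n|\to\infty$. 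The centers $y_n$ and sizes $R_n$ are chosen so that (i) $\supp\chi_n\cap\supp\xi=\emptyset$, so that $\Sigma$ locally coincides with $\Sigma_0$, and (ii) the $(d-1)$-measure of $\{x'\in\supp\chi_n:|\alpha(x',0)-\alpha_0|>\varepsilon_n\}$ is negligible compared to $|\supp\chi_n|$ for some $\varepsilon_n\to 0$; both choices are available by the compact support of $\xi$ and by \eqref{Eq_Decay_property_potential}. After normalization the $\Psi_n$ are orthonormal, and using $-\partial_y^2 e^{-\alpha_0|y|/2}-\alpha_0\delta(y)e^{-\alpha_0|y|/2}=-\tfrac{\alpha_0^2}{4}e^{-\alpha_0|y|/2}$ together with $(-\Delta_{x'})e^{\ii k\cdot x'}=|k|^2e^{\ii k\cdot x'}$, a direct calculation gives $\|(\sfA_\alpha-\lambda)\Psi_n\|_{L^2}\to 0$.

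For the reverse inclusion, given the first inclusion it suffices to establish $\inf\sess(\sfA_\alpha)\geq\inf\sigma(\sfA^{\rm ref})$. To this end I would invoke Persson's theorem for semibounded operators,
\begin{equation*}
\inf\sess(\sfA_\alpha)=\lim_{R\to\infty}\inf\big\{\mathfrak{a}_\alpha[\phi,\phi]\colon\phi\in H^1(\mathbb{R}^d),\;\|\phi\|_{L^2}=1,\;\supp\phi\subseteq\mathbb{R}^d\setminus\ov{B_R}\big\}.
\end{equation*}
For $R$ larger than a constant determined by $\supp\xi$ and $\|\xi\|_\infty$, any admissible $\phi$ satisfies $\supp\phi\cap\Sigma\subseteq\Sigma_0$, whence
\begin{equation*}
\mathfrak{a}_\alpha[\phi,\phi]=\|\nabla\phi\|_{L^2}^2-\alpha_0\|\tau_{\rm D}\phi\|_{L^2(\Sigma_0)}^2-\int_{\Sigma_0}(\alpha-\alpha_0)\,|\tau_{\rm D}\phi|^2\,\dd x,
\end{equation*}
and the first two terms are bounded below by $\inf\sigma(\sfA^{\rm ref})\,\|\phi\|_{L^2}^2$ by the variational principle applied to $\sfA^{\rm ref}$.

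The main obstacle is to control the residual $(\alpha-\alpha_0)$-integral uniformly in the admissible $\phi$ as $R\to\infty$. I would decompose $\alpha-\alpha_0=\eta_\varepsilon+\rho_\varepsilon$ with $\|\eta_\varepsilon\|_{L^\infty(\Sigma_0)}<\varepsilon$ and $\rho_\varepsilon\in L^p(\Sigma_0)$ supported on a set $E_\varepsilon\subseteq\Sigma_0$ of finite $(d-1)$-measure, which is available from \eqref{Eq_Decay_property_potential} applied at level $\varepsilon$ (the $L^p$-integrability of $\rho_\varepsilon$ follows from $\alpha\in L^p+L^\infty$ together with $|E_\varepsilon|<\infty$). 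The $\eta_\varepsilon$-contribution is absorbed by the relative form bound from the proof of Proposition~\ref{prop_Closed_form} and yields an $O(\varepsilon)$ term uniformly in $\phi$. The $\rho_\varepsilon$-contribution is bounded via H\"older's inequality by $\|\rho_\varepsilon\|_{L^p(E_\varepsilon\setminus\ov{B_R})}\,\|\tau_{\rm D}\phi\|^2_{L^{2p/(p-1)}(\Sigma_0)}$; the trace factor is uniformly bounded by Sobolev embedding (for which the conditions $1<p<\infty$ in $d=2$ and $d-1\leq p<\infty$ in $d\geq 3$ are precisely what is needed), while the $L^p$-factor vanishes as $R\to\infty$ by dominated convergence. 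Letting $R\to\infty$ and then $\varepsilon\to 0$ delivers the required lower bound and completes the proof.
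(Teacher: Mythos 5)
Your overall route differs from the paper's (which establishes the constant-coefficient hyperplane case by a tensor decomposition, transfers it to the deformed surface $\Sigma$ via \cite[Theorem 4.7]{BEL14}, and then treats $\alpha-\alpha_0$ through compactness of the resolvent difference, using Proposition~\ref{prop_Malpha_compactness}), and in principle a ``Weyl sequence plus Persson'' scheme can be made to work; the logic of combining the inclusion $\sess(\sfA^{\rm ref})\subseteq\sess(\sfA_\alpha)$ with the lower bound $\inf\sess(\sfA_\alpha)\geq\inf\sigma(\sfA^{\rm ref})$ is sound because the reference essential spectrum is a half-line. However, as written there is a genuine gap in the first inclusion: your trial functions $\Psi_n=c_n\chi_n(x')e^{\ii k\cdot x'}e^{-\alpha_0|x_d|/2}$ are in general \emph{not} in $\dom\sfA_\alpha$. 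They satisfy the $\delta$-transmission condition with the constant $\alpha_0$, whereas membership in the operator domain forces the jump of the normal derivative to involve $\alpha$ itself; since \eqref{Eq_Decay_property_potential} only says the set $\{|\alpha-\alpha_0|>\varepsilon\}$ has \emph{finite} measure (it may meet every ball in a set of positive measure), you cannot choose $\supp\chi_n$ to avoid it, so $(\alpha-\alpha_0)\tau_{\rm D}\Psi_n\neq0$ and the quantity $\Vert(\sfA_\alpha-\lambda)\Psi_n\Vert_{L^2}$ is not even defined. To repair this you would need a singular-sequence criterion at the form level and then an estimate of the residual surface term $\int(\alpha-\alpha_0)\,\tau_{\rm D}\Psi_n\,\ov{\tau_{\rm D}v}\,\dd x$ uniformly over $v$ in the form-norm unit ball; your condition (ii), smallness of the \emph{relative measure} of the bad set, does not suffice for this because $\alpha-\alpha_0$ is only $L^p$ there and may be unbounded. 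What does work is the decomposition $\alpha-\alpha_0=\eta_\varepsilon+\rho_\varepsilon$ with $\Vert\eta_\varepsilon\Vert_\infty\leq\varepsilon$ and $\rho_\varepsilon\in L^p$ on a finite-measure set, combined with $\Vert\tau_{\rm D}\Psi_n\Vert_{L^\infty}\sim c_n\to0$ and the Sobolev trace embedding -- but none of this appears in your argument.

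In the reverse inclusion there are two further points. First, Persson's theorem is invoked for an operator whose form contains a surface-measure potential; the classical statement covers $-\Delta+V$ with a function potential, so you must either prove a Persson-type formula for $\fra_\alpha$ (IMS localization together with local compactness of the trace term) or cite a version valid for such forms -- this is true but not free. Second, your claim that the factor $\Vert\tau_{\rm D}\phi\Vert^2_{L^{2p/(p-1)}(\Sigma_0)}$ is ``uniformly bounded'' over admissible $\phi$ is false as stated: it is controlled by $C\Vert\phi\Vert^2_{H^1(\dR^d)}$, and only $\Vert\phi\Vert_{L^2}=1$ is imposed, so $\Vert\nabla\phi\Vert$ is unbounded on the admissible set. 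This is fixable, e.g.\ by restricting to trial functions with $\fra_\alpha[\phi]\leq\inf+1$ (the relative form bound of Proposition~\ref{prop_Closed_form} then gives a uniform $H^1$ bound) or by absorbing the small $L^p$ factor into the gradient term, but the step must be made. By contrast, the paper's compact-perturbation argument ($\sfW=\sfT_2^*\sfT_1$ with the compactness of the multiplication operator from Proposition~\ref{prop_Malpha_compactness}) yields both inclusions simultaneously and avoids the domain and Persson issues entirely.
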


\begin{proof}
{\it Step 1.} First, we consider the hyperplane $\Sigma=\mathbb{R}^{d-1}\times\{0\}\cong\mathbb{R}^{d-1}$ and the constant potential $\alpha(x)=\alpha_0$. We introduce two auxiliary closed forms
\begin{align*}
\mathfrak{d}[\phi,\psi]&\coloneqq(\nabla\phi,\nabla\psi)_{L^2(\mathbb{R}^{d-1};\mathbb{C}^{d-1})},&\text{with}\quad&\dom\mathfrak{d}\coloneqq H^1(\mathbb{R}^{d-1}), \\
\mathfrak{t}_{\alpha_0}[f,g]&\coloneqq(f',g')_{L^2(\mathbb{R})}-\alpha_0f(0)\overline{g(0)},&\text{with}\quad&\dom\mathfrak{t}_{\alpha_0}\coloneqq H^1(\mathbb{R}),
\end{align*}
with the corresponding self-adjoint operators $-\Delta$ and $\sfT_{\aa_0}$ in the Hilbert spaces $L^2(\dR^{d-1})$ and $L^2(\dR)$, respectively. The spectra of these operators are explicitly given by
\begin{equation*}
\s(-\Delta)=[0,\infty)\quad\text{and}\quad\s(\sfT_{\aa_0})=\left\{\begin{array}{ll} \{-\frac{\alpha_0^2}{4}\}\cup[0,\infty), & \text{if }\alpha_0\geq 0, \\ {[0,\infty)}, & \text{if }\alpha_0\leq 0, \end{array}\right.
\end{equation*}
where the proof of the latter one can be found in \cite[Theorem 3.1.4]{AGHH05}. The Schrödinger operator $\wt\sfA_{\aa_0}$ with $\delta$-potential supported on a hyperplane of constant strength $\aa_0$ can be decomposed as
\begin{equation*}
\wt\sfA_{\aa_0}=(-\Delta)\otimes\sfI_\mathbb{R}+\sfI_{\mathbb{R}^{d-1}}\otimes\sfT_{\aa_0}
\end{equation*}
with respect to $L^2(\dR^d)=L^2(\dR^{d-1})\otimes L^2(\dR)$; here $\sfI_\mathbb{R}$ and $\sfI_{\mathbb{R}^{d-1}}$ denote the identity operators in $L^2(\mathbb{R})$ and $L^2(\mathbb{R}^{d-1})$, respectively. Hence, it follows from \cite[Eq. (4.44)]{Te2009} that
\begin{equation}\label{eq:specAaa0}
\s(\wt\sfA_{\aa_0})=\left\{\begin{array}{ll} [-\frac{\alpha_0^2}{4},\infty), & \text{if }\alpha_0\geq 0, \\ {[0,\infty)}, & \text{if }\alpha_0\leq 0. \end{array}\right.
\end{equation}

{\it Step 2.} Let $\sfA_{\aa_0}$ be the Schrödinger operator with $\delta$-potential of constant strength $\aa_0$ supported on the hypersurface $\Sigma$. Since the Lipschitz mapping $\xi$ is compactly supported, the surface $\Sigma$ is a local deformation of the hyperplane $\dR^{d-1}\times\{0\}$ in the sense that $\Sigma\sm\cB=(\dR^{d-1}\times\{0\})\sm\cB$ for a ball $\cB\subset\dR^d$ of sufficiently large radius. Hence it follows from \eqref{eq:specAaa0} using \cite[Theorem 4.7]{BEL14} that 
\begin{equation}\label{eq:essspec}
\sess(\sfA_{\aa_0})=\sess(\wt\sfA_{\aa_0})=\left\{\begin{array}{ll} [-\frac{\alpha_0^2}{4},\infty), & \text{if }\alpha_0\geq 0, \\ {[0,\infty)}, & \text{if }\alpha_0\leq 0. \end{array}\right. 
\end{equation}

{\it Step 3.} With $\alpha_0$ from the decay property \eqref{Eq_Decay_property_potential} we define $\alpha_1\coloneqq\alpha-\alpha_0$, such that $\Set{x\in\Sigma | \vert\alpha_1(x)\vert>\varepsilon}$ has finite measure for every $\varepsilon>0$. The self-adjoint operators $\sfA_{\aa_0}$ and $\sfA_\aa$ are both semibounded since they correspond to semibounded forms. Hence, we can fix $\lambda<\inf(\s(\sfA_{\aa_0})\cup\sigma(\sfA_\aa))$ and consider the resolvent difference
\begin{equation}\label{eq:W}
\sfW\coloneqq(\sfA_{\aa_0}-\lambda)^{-1}-(\sfA_\aa-\lambda)^{-1}.
\end{equation}
Our aim is to show that $\sfW$ is a compact operator in $L^2(\mathbb{R}^d)$. For this let $f,g\in L^2(\mathbb{R}^d)$ and set
\begin{equation}\label{eq:uv}
u\coloneqq(\sfA_{\aa_0}-\lambda)^{-1}f\quad\text{and}\quad v\coloneqq(\sfA_\aa-\lambda)^{-1}g.
\end{equation}
Using \eqref{eq:uv} and the definition of the operator $\sfW$ in~\eqref{eq:W} we obtain
\begin{equation}\label{Eq_W_reduction}
\begin{split}
(\sfW f,g)_{L^2(\dR^d)}&=\big((\sfA_{\aa_0}-\lambda)^{-1}f,g\big)_{L^2(\dR^d)}-\big((\sfA_\aa-\lambda)^{-1}f,g\big)_{L^2(\dR^d)} \\
&=(u,g)_{L^2(\dR^d)}-(f,v)_{L^2(\dR^d)} \\
&=\big(u,(\sfA_\aa-\lambda)v\big)_{L^2(\dR^d)}-\big((\sfA_{\aa_0}-\lambda)u,v\big)_{L^2(\dR^d)} \\
&=(u,\sfA_\aa v)_{L^2(\dR^d)}-(\sfA_{\aa_0}u,v)_{L^2(\dR^d)}.
\end{split}
\end{equation}
We can express the above inner products via the corresponding forms \eqref{dada} and conclude that $(\sfW f,g)_{L^2(\dR^d)}$ reduces to the surface integral
\begin{equation*}
(\sfW f,g)_{L^2(\dR^d)}=-\int_\Sigma\alpha_1\tau_{\rm D}u\,\overline{\tau_{\rm D}v}\,\dd x=(\sfT_1f,\sfT_2g)_{L^2(\Sigma)},
\end{equation*}
where $\sfT_1,\sfT_2:L^2(\mathbb{R}^d)\rightarrow L^2(\Sigma)$ are defined by
\begin{equation*}
\sfT_1\coloneqq|\alpha_1|^{\frac{1}{2}}\tau_{\rm D}(\sfA_{\aa_0}-\lambda)^{-1}\quad\text{and}\quad\sfT_2\coloneqq-\sgn(\alpha_1)|\alpha_1|^{\frac{1}{2}}\tau_{\rm D}(\sfA_\aa-\lambda)^{-1}.
\end{equation*}
As $(\sfA_{\alpha_0}-\lambda)^{-1}$ and $(\sfA_\alpha-\lambda)^{-1}$ are bounded operators from $L^2(\mathbb{R}^d)$ into $H^1(\mathbb{R}^d)$, it follows from \eqref{Eq_Trace_Sobolev_boundedness} that $\tau_{\rm D}(\sfA_{\alpha_0}-\lambda)^{-1}$ and $\tau_{\rm D}(\sfA_\alpha-\lambda)^{-1}$ are bounded from $L^2(\mathbb{R}^d)$ into $H^{\frac{1}{2}}(\Sigma)$. Consequently, both $\sfT_1$ and $\sfT_2$ are compact as operators from $L^2(\mathbb{R}^d)$ into $L^2(\Sigma)$ by Proposition \ref{prop_Malpha_compactness}. Thus the operator $\sfW=\sfT_2^*\sfT_1$ is compact as well and the stability of the essential spectrum under compact perturbations in resolvent sense 
combined with~\eqref{eq:essspec} yields the claim.
\end{proof}

\subsection{Uniqueness of the ground state}\label{sec_AppA}

In this subsection we assume that the bottom of the spectrum $\lambda_1(\alpha)$ in \eqref{eq:lowest} is a discrete eigenvalue of $\sfA_\alpha$. The aim is to prove in Theorem~\ref{thm_Uniqueness_groundstate} that this eigenvalue is simple and the corresponding eigenfunction can be chosen strictly positive on $\mathbb{R}^d\setminus\Sigma$.

\begin{lem}\label{lem_Absolute_value_eigenfunction}
Let $u\in H^1(\mathbb{R}^d)$ be a real-valued eigenfunction of $\sfA_\alpha$ corresponding to $\lambda_1(\aa)$. Then also $|u|$ is an eigenfunction of $\sfA_\alpha$ corresponding to $\lambda_1(\alpha)$.
\end{lem}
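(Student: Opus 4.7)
The plan is to exploit the variational characterization
\[
\lambda_1(\alpha) = \inf_{u\in H^1(\mathbb{R}^d)\setminus\{0\}} \frac{\mathfrak{a}_\alpha[u,u]}{\|u\|^2_{L^2(\mathbb{R}^d)}}
\]
which follows from the first representation theorem applied to the closed semibounded form $\mathfrak{a}_\alpha$ (Proposition~\ref{prop_Closed_form}) together with $\lambda_1(\alpha)=\inf\sigma(\sfA_\alpha)$. Since $u$ is by hypothesis an eigenfunction for $\lambda_1(\alpha)$, it realizes this infimum: $\mathfrak{a}_\alpha[u,u]=\lambda_1(\alpha)\|u\|^2_{L^2(\mathbb{R}^d)}$. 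The strategy is then to show that $|u|$ realizes the same infimum, and to invoke the standard fact that any $L^2$-normalized minimizer of the Rayleigh quotient is an eigenfunction for the lowest eigenvalue.

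First I would verify that $|u|\in H^1(\mathbb{R}^d)$ with
\[
\nabla |u| = \sgn(u)\,\nabla u \quad \text{a.e. on } \mathbb{R}^d,
\]
which is Stampacchia's lemma on Sobolev functions (see, e.g., \cite[Thm.~4.4]{McLean} or the analogous result in \cite{Kato}). In particular $\||u|\|_{L^2(\mathbb{R}^d)}=\|u\|_{L^2(\mathbb{R}^d)}$ and $\|\nabla|u|\|_{L^2(\mathbb{R}^d;\mathbb{C}^d)}=\|\nabla u\|_{L^2(\mathbb{R}^d;\mathbb{C}^d)}$.

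Next I would show the trace compatibility $\tau_{\rm D}|u|=|\tau_{\rm D}u|$ almost everywhere on $\Sigma$. This is the step requiring a bit of care: for $u\in C^\infty(\mathbb{R}^d)\cap H^1(\mathbb{R}^d)$ it holds pointwise, and for general $u\in H^1(\mathbb{R}^d)$ one approximates $u$ in the $H^1$-norm by smooth real-valued $u_n$, uses continuity of $|\cdot|$ on $H^1$ (a consequence of Stampacchia's lemma), and passes to the limit using boundedness of $\tau_{\rm D}$ in \eqref{Eq_Trace_Sobolev_boundedness}. Consequently,
\[
\int_\Sigma \alpha\, |\tau_{\rm D}|u||^2\,\dd x = \int_\Sigma \alpha\, |\tau_{\rm D}u|^2\,\dd x,
\]
so that $\mathfrak{a}_\alpha[|u|,|u|]=\mathfrak{a}_\alpha[u,u]=\lambda_1(\alpha)\|u\|^2_{L^2(\mathbb{R}^d)}=\lambda_1(\alpha)\||u|\|^2_{L^2(\mathbb{R}^d)}$.

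Finally, $|u|$ realizes the infimum in the Rayleigh quotient and thus belongs to the eigenspace of $\sfA_\alpha$ at $\lambda_1(\alpha)$. To make this last step precise I would argue that for every $v\in H^1(\mathbb{R}^d)$ the function $t\mapsto \mathfrak{a}_\alpha[|u|+tv,|u|+tv]-\lambda_1(\alpha)\||u|+tv\|^2$ is nonnegative with a minimum at $t=0$, so differentiation yields $\mathfrak{a}_\alpha[|u|,v]=\lambda_1(\alpha)(|u|,v)_{L^2(\mathbb{R}^d)}$ for all $v\in \dom \mathfrak{a}_\alpha$; the first representation theorem \cite[Chapter VI, Theorem 2.1]{Ka1976} then places $|u|$ in $\dom \sfA_\alpha$ with $\sfA_\alpha|u|=\lambda_1(\alpha)|u|$. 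The main obstacle is the trace identity $\tau_{\rm D}|u|=|\tau_{\rm D}u|$, which cannot be established purely pointwise since $\tau_{\rm D}u$ is only defined up to $H^{1/2}$-equivalence, but it follows from the density of smooth functions and the continuity of both the trace operator and of $u\mapsto |u|$ in $H^1(\mathbb{R}^d)$.
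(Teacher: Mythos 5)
Your proposal is correct and follows essentially the same route as the paper: the identities $|\nabla|u||=|\nabla u|$ and $\tau_{\rm D}|u|=|\tau_{\rm D}u|$ show that $|u|$ has the same Rayleigh quotient as $u$, hence attains the infimum $\lambda_1(\alpha)=\inf\mathfrak{a}_\alpha[v]/\|v\|^2_{L^2(\mathbb{R}^d)}$, and therefore is an eigenfunction. The only deviations are cosmetic: you justify the final "minimizer is an eigenfunction" step directly by a first-order variation of $t\mapsto\mathfrak{a}_\alpha[|u|+tv]-\lambda_1(\alpha)\||u|+tv\|^2$ combined with the first representation theorem \cite[Chapter VI, Theorem 2.1]{Ka1976} (which, unlike the paper's citation of \cite[Chapter~10.2, Theorem~1]{BS87}, does not even use discreteness of $\lambda_1(\alpha)$), you spell out the approximation argument for $\tau_{\rm D}|u|=|\tau_{\rm D}u|$ that the paper only asserts, and your reference for $\nabla|u|=\sgn(u)\nabla u$ should be \cite[Theorem 6.17]{LiLo2001} rather than the sources you name.
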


\begin{proof}
From the fact that $|\nabla|u||=|\nabla u|$, cf. \cite[Theorem 6.17]{LiLo2001}, and $\tau_{\rm D}|u|=|\tau_{\rm D}u|$, we obtain
\begin{equation*}
\frac{\mathfrak{a}_\alpha[|u|]}{\Vert|u|\Vert_{L^2(\mathbb{R}^d)}^2}=\frac{\mathfrak{a}_\alpha[u]}{\Vert u\Vert_{L^2(\mathbb{R}^d)}^2}=\lambda_1(\alpha).
\end{equation*}
Since $\lambda_1(\alpha)$ is the bottom of the spectrum it can be represented by the min-max principle \cite[Theorem XIII.2]{ReedSimon} as
\begin{equation*}
\lambda_1(\alpha)=\inf\limits_{0\neq v\in H^1(\mathbb{R}^d)}\frac{\mathfrak{a}_\alpha[v]}{\Vert v\Vert^2_{L^2(\mathbb{R}^d)}}.
\end{equation*}
Since $\lambda_1(\alpha)$ is assumed to be a discrete eigenvalue, it follows from \cite[Chapter~10.2, Theorem~1]{BS87} that $|u|$ is indeed an eigenfunction of $\sfA_\alpha$ corresponding to the eigenvalue $\lambda_1(\alpha)$.
\end{proof}

\begin{lem}\label{lem_Zero_solution_one_point}
Let $\Omega\subseteq\mathbb{R}^d$ be open and connected. Assume that $u\in H^1(\Omega)$ and $\lambda\in\mathbb{R}$ satisfy
\begin{equation*}
(\nabla u,\nabla v)_{L^2(\Omega;\mathbb{C}^d)}=\lambda(u,v)_{L^2(\Omega)},\quad v\in H_0^1(\Omega).
\end{equation*}
Then $u\in\mathcal{C}^\infty(\Omega)$ and if $u\geq 0$ and $u(x_0)=0$ for some $x_0\in\Omega$, then $u\equiv 0$.
\end{lem}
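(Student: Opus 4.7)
The plan is to prove this in two stages corresponding to the two conclusions. First I would establish $u\in C^\infty(\Omega)$ via a standard elliptic bootstrap, and then use a Harnack/strong maximum principle argument combined with a topological connectedness argument to deduce that $u\equiv 0$.

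For the regularity part, I would read the variational identity as saying that $u$ is a distributional solution of $-\Delta u = \lambda u$ on $\Omega$. Since $u\in H^1(\Omega)\subset L^2_{\rm loc}(\Omega)$, interior elliptic regularity for the Laplacian (e.g.\ cutting off with a compactly supported smooth function and applying the standard $H^1\to H^2_{\rm loc}$ interior estimate to $-\Delta u=\lambda u$) yields $u\in H^2_{\rm loc}(\Omega)$. Bootstrapping by differentiating the equation and reapplying the interior regularity theorem, one gets $u\in H^k_{\rm loc}(\Omega)$ for every $k\in\mathbb{N}$, and then Sobolev embedding produces $u\in C^\infty(\Omega)$. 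Equivalently, one can simply note that $(-\Delta-\lambda)u=0$ distributionally has the analytic-hypoelliptic (in fact real-analytic) Laplacian-minus-constant as symbol, so $u$ is $C^\infty$ (even real-analytic).

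For the vanishing statement, the key tool is the strong maximum principle for nonnegative smooth supersolutions of uniformly elliptic equations with bounded zeroth-order coefficient. Since $u\in C^\infty(\Omega)$ satisfies the classical equation $-\Delta u - \lambda u = 0$ on $\Omega$ and $u\geq 0$, rewrite this as $-\Delta u + \lambda^- u = \lambda^+ u \geq 0$, so $u$ is a nonnegative classical supersolution of $(-\Delta+\lambda^-)u\geq 0$. By the strong maximum principle (equivalently, Harnack's inequality applied on balls), whenever $u$ attains the value $0$ at an interior point of a ball $B\subset\Omega$, it must be identically zero on $B$. Thus the set
\[
Z \coloneqq \{\, x\in\Omega : u(x)=0\,\}
\]
is open. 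On the other hand, $u$ is continuous, so $Z$ is also closed in $\Omega$. By hypothesis $x_0\in Z$, so $Z$ is nonempty, and by connectedness of $\Omega$ we conclude $Z=\Omega$, i.e.\ $u\equiv 0$.

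The only subtle point is the application of the strong maximum principle, which requires $u$ to be a genuine (classical) solution; this is why the regularity step has to precede it. There is no real obstacle beyond quoting the correct version of Harnack's inequality for $-\Delta u = \lambda u$ with arbitrary $\lambda\in\mathbb{R}$, which is standard (see, e.g., the strong maximum principle in Gilbarg–Trudinger).
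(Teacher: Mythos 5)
Your proof is correct and follows essentially the same route as the paper: interior elliptic regularity gives $u\in\mathcal{C}^\infty(\Omega)$, and the vanishing statement is deduced from Harnack's inequality (in your case phrased via the strong maximum principle for $-\Delta+\lambda^-$ with nonnegative zeroth-order term). The only cosmetic difference is how connectedness is used — you run an open-closed argument on the zero set, while the paper covers a path from $x_0$ to an arbitrary point by a bounded open set compactly contained in $\Omega$ and applies Harnack there — but both are standard and equivalent ways to globalize the local vanishing.
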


\begin{proof}
The interior regularity $u\in\mathcal{C}^\infty(\Omega)$ is well known; cf. \cite[§6.3. Theorem~3]{Ev2010}. Assume now $u\geq 0$ and $u(x_0)=0$ for some $x_0\in\Omega$. Since $\Omega$ is connected, for every $x\in\Omega$ there exists a path $\gamma$ connecting $x$ and $x_0$. Since $\Omega$ is also open, there even exists some open and bounded $U$ with $\gamma\subseteq U\subseteq\overline{U}\subseteq\Omega$. Then it follows from Harnack's inequality \cite[Corollary 8.21]{GiTr2001}, that
\begin{equation*}
\sup\limits_{y\in U}u(y)\leq C\inf\limits_{y\in U}u(y),
\end{equation*}
for some constant $C>0$. Since $u(x_0)=0$, the right and hence also the left hand side of this inequality vanishes. Therefore, $u|_U=0$ and in particular $u(x)=0$. Since $x\in\Omega$ was arbitrary, we conclude $u\equiv 0$.
\end{proof}

\begin{lem}\label{lem_Positive_eigenfunction}
Let $u\in H^1(\mathbb{R}^d)$ be a real-valued eigenfunction of $\sfA_\alpha$ corresponding to $\lambda_1(\aa)$. Then $u\in\mathcal{C}^\infty(\mathbb{R}^d\setminus\Sigma)$ is either strictly positive or strictly negative on $\mathbb{R}^d\setminus\Sigma$.
\end{lem}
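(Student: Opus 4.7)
The plan is to use that $\dR^d\setminus\Sigma$ splits into two connected components $\Omega_+$ and $\Omega_-$, together with Lemma~\ref{lem_Absolute_value_eigenfunction} and Lemma~\ref{lem_Zero_solution_one_point}, while exploiting that the assumption ``$\lambda_1(\alpha)$ is a discrete eigenvalue'' combined with Lemma~\ref{lemmilein} forces $\lambda_1(\alpha)<0$, since the discrete spectrum is disjoint from $\sess\supseteq[0,\infty)$.

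First I would observe that for any test function $v\in H_0^1(\Omega_\pm)$, its extension by zero lies in $H^1(\dR^d)$ and has vanishing Dirichlet trace on $\Sigma$. Testing the eigenvalue identity $\mathfrak{a}_\alpha[u,v]=\lambda_1(\alpha)(u,v)_{L^2(\dR^d)}$ against such $v$ makes the surface term vanish, so $u$ satisfies the hypothesis of Lemma~\ref{lem_Zero_solution_one_point} on each of $\Omega_\pm$, giving $u\in\mathcal{C}^\infty(\dR^d\setminus\Sigma)$. By Lemma~\ref{lem_Absolute_value_eigenfunction}, $|u|$ is itself an eigenfunction for $\lambda_1(\alpha)$ and hence a nonnegative weak solution of the same equation on each $\Omega_\pm$; the dichotomy in Lemma~\ref{lem_Zero_solution_one_point} then yields that on each component either $|u|\equiv 0$ or $|u|>0$ everywhere.

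The main obstacle is to rule out, on the one hand, that $|u|$ vanishes identically on one component, and on the other hand that $u$ assumes opposite signs on $\Omega_+$ and $\Omega_-$. Both cases will be handled by the same ``one-sided trace'' argument. If $u\equiv 0$ on, say, $\Omega_+$, then the trace of $u$ from the $\Omega_+$-side is zero, and since $u\in H^1(\dR^d)$ the two one-sided traces must coincide, so $\tau_{\rm D}u=0$ on $\Sigma$. Similarly, if $u>0$ on $\Omega_+$ and $u<0$ on $\Omega_-$, then the trace from $\Omega_+$ is $\geq 0$ and the trace from $\Omega_-$ is $\leq 0$ (preservation of sign by the trace for $H^1$-functions), and matching of the two traces again forces $\tau_{\rm D}u=0$.

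In either scenario, the surface term in $\mathfrak{a}_\alpha[u,u]$ vanishes and one obtains
\begin{equation*}
0\leq\|\nabla u\|_{L^2(\dR^d;\dC^d)}^2=\mathfrak{a}_\alpha[u,u]=\lambda_1(\alpha)\|u\|_{L^2(\dR^d)}^2<0,
\end{equation*}
a contradiction, since $u\not\equiv 0$ as an eigenfunction and $\lambda_1(\alpha)<0$. Consequently $|u|>0$ on both $\Omega_+$ and $\Omega_-$, and $u$ cannot change sign between them. Combined with continuity and connectedness of $u$ on each $\Omega_\pm$, this forces $u$ to be either strictly positive or strictly negative throughout $\dR^d\setminus\Sigma$, as claimed.
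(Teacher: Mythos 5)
Your proof is correct and follows essentially the same route as the paper: smoothness and the sign dichotomy come from Lemma~\ref{lem_Zero_solution_one_point} applied to $|u|$ (via Lemma~\ref{lem_Absolute_value_eigenfunction}), and the bad cases are excluded by showing $\tau_{\rm D}u=0$ and contradicting the negativity of $\lambda_1(\alpha)$ through the variational identity with $v=u$. The only difference is organizational: you dispatch the ``vanishing on one component'' and ``opposite signs'' cases with a single unified trace argument, while the paper treats them one after the other.
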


\begin{proof}
From Lemma \ref{lem_Zero_solution_one_point} we conclude $u\in\mathcal{C}^\infty(\mathbb{R}^d\setminus\Sigma)$. In order to show that $u$ has no zeros in $\mathbb{R}^d\setminus\Sigma$, we assume the converse, i.e. that $u(x_0)=0$ for some $x_0\in\mathbb{R}^d\setminus\Sigma$. It is clear that $\Sigma$ cuts the whole space $\mathbb{R}^d$ into the two domains
\begin{align*}
\Omega_+&\coloneqq\Set{(x,x_d)\in\mathbb{R}^{d-1}\times\mathbb{R} | x_d>\xi(x)},\\
\Omega_-&\coloneqq\Set{(x,x_d)\in\mathbb{R}^{d-1}\times\mathbb{R} | x_d<\xi(x)}.
\end{align*}
We will assume without loss of generality that $x_0\in\Omega_+$. Since, by Lemma \ref{lem_Absolute_value_eigenfunction}, $|u|$ is also an eigenfunction corresponding to $\lambda_1(\aa)$, we have
\begin{equation*}
(\nabla|u|,\nabla v)_{L^2(\Omega_+;{\dC^d})}=\lambda_1(\aa)(|u|,v)_{L^2(\Omega_+)},\quad v\in H_0^1(\Omega_+),
\end{equation*}
and Lemma \ref{lem_Zero_solution_one_point} implies $u|_{\Omega_+}\equiv 0$. In particular, we have $\tau_{\rm D}u=0$ and the eigenvalue equation for $u$ reduces to
\begin{equation*}
(\nabla u,\nabla v)_{L^2(\Omega_-;\mathbb{C}^d)}=\lambda_1(\aa)(u,v)_{L^2(\Omega_-)},\quad v\in H^1(\mathbb{R}^d).
\end{equation*}
Since $\lambda_1(\alpha)$ is a discrete eigenvalue, it is negative by Lemma~\ref{lemmilein}, and consequently choosing $v=u$, we conclude $u|_{\Omega_-}\equiv 0$. But this is a contradiction to the fact that $u$ is a (nonzero) eigenfunction; hence $u$ has no zeros in $\mathbb{R}^d\setminus\Sigma$.

\medskip

Since we already know that $u\in\mathcal{C}^\infty(\mathbb{R}^d\setminus\Sigma)$ has no zeros in $\mathbb{R}^d\setminus\Sigma$, it has to be either strictly positive or strictly negative on each of the domains $\Omega_\pm$. However, a priori the signs of $u$ may not coincide. If, e.g.
\begin{equation*}
u|_{\Omega_+}>0\quad\text{and}\quad u|_{\Omega_-}<0,
\end{equation*}
then $\tau_{\rm D}u=0$ and the eigenvalue equation for $u$ reduces to
\begin{equation*}
(\nabla u,\nabla v)_{L^2(\mathbb{R}^d;{\dC^d})}=\lambda_1(\aa)(u,v)_{L^2(\mathbb{R}^d)},\quad v\in H^1(\mathbb{R}^d).
\end{equation*}
Choosing $v=u$ we again conclude $u\equiv 0$ by the negativity of $\lambda_1(\alpha)$; a contradiction as $u$ is a (nonzero) eigenfunction.
\end{proof}

\begin{thm}\label{thm_Uniqueness_groundstate}
If the bottom \eqref{eq:lowest} of the spectrum of $\sfA_\alpha$ is a discrete eigenvalue, then it is simple and the corresponding eigenfunction can be chosen strictly positive on $\mathbb{R}^d\setminus\Sigma$.
\end{thm}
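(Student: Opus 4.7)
The plan is to combine Lemma \ref{lem_Absolute_value_eigenfunction} and Lemma \ref{lem_Positive_eigenfunction} with a standard orthogonality argument inside the ground-state eigenspace.

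First, I would observe that, since $\aa$ is real-valued, the form $\fra_\aa$ in \eqref{Eq_Schroedinger_form} is invariant under complex conjugation and hence so is $\sfA_\aa$. Consequently, if $u\in\ker(\sfA_\aa-\lm_1(\aa))$ is an arbitrary (possibly complex) eigenfunction, then both $\Re u$ and $\Im u$ belong to the same eigenspace and at least one of them is nonzero. Applying Lemma \ref{lem_Positive_eigenfunction} to such a nonzero real eigenfunction and, if necessary, replacing it by its negative, one immediately obtains an eigenfunction which is strictly positive on $\dR^d\sm\Sg$; this already yields the positivity part of the theorem.

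To prove simplicity I would argue by contradiction, assuming $\dim\ker(\sfA_\aa-\lm_1(\aa))\geq 2$. By the previous step, the eigenspace contains two $\dR$-linearly independent real-valued elements, and a single Gram--Schmidt step (which stays within the real part of the eigenspace, since the space is invariant under conjugation) produces two nonzero real eigenfunctions $u_1,u_2$ with $(u_1,u_2)_{L^2(\dR^d)}=0$. By Lemma \ref{lem_Positive_eigenfunction}, each of $u_1,u_2$ is either strictly positive or strictly negative on all of $\dR^d\sm\Sg$, so after multiplying by $\pm 1$ we may assume $u_1,u_2>0$ there. Since $\Sg$ is the graph of a Lipschitz function, it has $d$-dimensional Lebesgue measure zero; therefore $u_1 u_2>0$ almost everywhere on $\dR^d$ and
\begin{equation*}
(u_1,u_2)_{L^2(\dR^d)}=\int_{\dR^d}u_1 u_2\,\dd x>0,
\end{equation*}
which contradicts the orthogonality.

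The bulk of the work is already encoded in Lemma \ref{lem_Positive_eigenfunction}, which in turn rests on the assumption that $\lm_1(\aa)$ is a \emph{discrete} eigenvalue (via Lemma \ref{lemmilein}, ensuring negativity) together with the connectedness of the two half-domains $\Omega_\pm$ separated by $\Sg$. Once strict sign-definiteness on $\dR^d\sm\Sg$ is at hand, simplicity follows from the elementary observation that two strictly positive $L^2$-functions cannot be orthogonal, so no further ingredients beyond the preparatory lemmas and the null-set property of $\Sg$ are required.
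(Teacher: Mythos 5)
Your proposal is correct and follows essentially the same route as the paper: a real basis of the eigenspace via conjugation invariance, Lemma \ref{lem_Positive_eigenfunction} to get sign-definiteness on $\dR^d\sm\Sg$, and the impossibility of two orthogonal sign-definite real eigenfunctions (using that $\Sg$ has measure zero). You merely spell out a few details (Gram--Schmidt within the real eigenspace, the explicit positivity of $\int u_1u_2\,\dd x$) that the paper leaves implicit.
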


\begin{proof}
Note that there exists a real-valued basis of the eigenspace corresponding to $\lambda_1(\aa)$ since 
for every eigenfunction the complex conjugate is also an 
eigenfunction. Now consider two orthogonal real-valued eigenfunctions $u_1$ and $u_2$. According to 
Lemma \ref{lem_Positive_eigenfunction} each eigenfunction is either strictly positive or strictly negative on $\mathbb{R}^d\setminus\Sigma$. But this is a contradiction to the orthogonality 
condition
\begin{equation*}
\int_{\mathbb{R}^d}u_1\,u_2\,\dd x=0.
\end{equation*}
Hence, the eigenspace is one-dimensional and thus $\lambda_1(\aa)$ is a simple eigenvalue.
\end{proof}

\section{The Birman-Schwinger principle and an optimization result for $\delta$-potentials on a hyperplane}\label{sec_BS}

In this section we assume that the support of the $\delta$-potential is a hyperplane and we shall therefore identify $\Sigma=\mathbb{R}^{d-1}\times\{0\}\cong\mathbb{R}^{d-1}$. Moreover, as in \eqref{Eq_alpha}, everywhere in this section we consider a real-valued function $\alpha\in L^p(\mathbb{R}^{d-1})+L^\infty(\mathbb{R}^{d-1})$ with $1<p<\infty$ if $d=2$ and $d-1\leq p<\infty$ if $d\geq 3$. Later we shall also assume that there exists some $\alpha_0\in\mathbb{R}$ such that
\begin{equation}\label{Eq_Decay_property_relativistic}
\Set{x\in\mathbb{R}^{d-1} | \vert\alpha(x)-\alpha_0\vert>\varepsilon}\text{ has finite measure for every }\varepsilon>0.
\end{equation}
We first discuss the Birman-Schwinger principle for the operator $\sfA_\aa$ in this special situation, by means of which the spectral problem can be reduced to the spectral analysis of a relativistic Schrödinger operator in $L^2(\dR^{d-1})$. As an application and illustration we prove an optimization result for the bottom of the spectrum of $\sfA_\alpha$ in Theorem \ref{thm:main2}.

\subsection{The Birman-Schwinger principle for $\delta$-potentials supported on a hyperplane}

For every $\lambda<0$ we consider the form
\begin{equation}\label{Eq_Relativistic_form}
\begin{split}
\mathfrak{d}_{\alpha,\lambda}[\phi,\psi]&\coloneqq 2\big((-\Delta-\lambda)^{\frac{1}{4}}\phi,(-\Delta-\lambda)^{\frac{1}{4}}\psi\big)_{L^2(\mathbb{R}^{d-1})}-\int_{\mathbb{R}^{d-1}}\alpha\,\phi\,{\overline{\psi}}\,\dd x, \\
\dom\mathfrak{d}_{\alpha,\lambda}&\coloneqq H^{\frac{1}{2}}(\mathbb{R}^{d-1}).
\end{split}
\end{equation}
It follows from Lemma~\ref{lem_Malpha_boundedness}, that for every $\varepsilon>0$ there exists a $c_\varepsilon>0$ such that
\begin{equation}\label{Eq_Potential_boundedness_hyperplane}
\big\Vert|\alpha|^{\frac{1}{2}}\phi\big\Vert^2_{L^2(\mathbb{R}^{d-1})}\leq\varepsilon^2\Vert\phi\Vert^2_{H^{\frac{1}{2}}(\mathbb{R}^{d-1})}+c_\varepsilon^2\Vert\phi\Vert^2_{L^2(\mathbb{R}^{d-1})},\quad\phi\in H^{\frac{1}{2}}(\mathbb{R}^{d-1}).
\end{equation}
Using this inequality it follows (see the proof of Proposition \ref{prop_Closed_form}) that $\mathfrak{d}_{\alpha,\lambda}$ is a densely defined, symmetric, semibounded and closed form in $L^2(\mathbb{R}^{d-1})$. We denote the corresponding self-adjoint operator in $L^2(\mathbb{R}^{d-1})$ by $\sfD_{\alpha,\lambda}$. It turns out in Proposition~\ref{prop_Equivalence_of_eigenvalues} below that the eigenvalue $0$ of this {\it relativistic Schrödinger operator} is linked to the eigenvalue $\lambda$ of the Schrödinger operator $\sfA_\alpha$.

\medskip

We first formulate and prove a preparatory lemma; here we shall denote the extension of the 
$L^2(\mathbb{R}^{d-1})$ scalar product onto the dual pair $H^{-\frac{1}{2}}(\mathbb{R}^{d-1})\times H^{\frac{1}{2}}(\mathbb{R}^{d-1})$ by
$\langle\,\cdot\,,\,\cdot\,\rangle_{H^{-\frac{1}{2}}(\mathbb{R}^{d-1})\times H^{\frac{1}{2}}(\mathbb{R}^{d-1})}$.

\begin{lem}\label{lem_gamma_field}
For every $\lambda<0$ there exists a unique bounded linear operator $\gamma(\lambda):H^{-\frac{1}{2}}(\mathbb{R}^{d-1})\rightarrow H^1(\mathbb{R}^d)$ such that the identity
\begin{equation}\label{Eq_gamma_field_1}
\big(\nabla\gamma(\lambda)\phi,\nabla v\big)_{L^2(\mathbb{R}^d;{\mathbb{C}^d})}-\lambda\,\big(\gamma(\lambda)\phi,v\big)_{L^2(\mathbb{R}^d)}=\langle\phi,\tau_{\rm D}v\rangle_{H^{-\frac{1}{2}}(\mathbb{R}^{d-1})\times H^{\frac{1}{2}}(\mathbb{R}^{d-1})}
\end{equation}
holds for all $\phi\in H^{-\frac{1}{2}}(\mathbb{R}^{d-1})$ and $v\in H^1(\mathbb{R}^d)$. Moreover, the trace of $\gamma(\lambda)$ is given by
\begin{equation}\label{Eq_gamma_field_2}
\tau_{\rm D}\gamma(\lambda)=\frac{1}{2}(-\Delta-\lambda)^{-\frac{1}{2}},
\end{equation}
and acts as a bounded linear operator from $H^{-\frac{1}{2}}(\mathbb{R}^{d-1})$ to $H^{\frac{1}{2}}(\mathbb{R}^{d-1})$.
\end{lem}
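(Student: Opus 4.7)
The plan is to use the Lax--Milgram theorem to construct $\gamma(\lm)$ as a weak solution operator, and then to identify it explicitly by a partial Fourier transform in order to read off the trace formula. Consider the sesquilinear form
\[
b_\lm(u,v) := (\nb u,\nb v)_{L^2(\dR^d;\dC^d)} - \lm(u,v)_{L^2(\dR^d)},\qquad u,v\in H^1(\dR^d).
\]
For $\lm<0$, $b_\lm$ is bounded, symmetric and coercive; its induced norm is equivalent to the standard one on $H^1(\dR^d)$. For fixed $\phi\in H^{-\frac{1}{2}}(\dR^{d-1})$, the antilinear form $v\mapsto \langle\phi,\tau_{\rm D} v\rangle_{H^{-\frac{1}{2}}\times H^{\frac{1}{2}}}$ is bounded on $H^1(\dR^d)$ by the duality pairing together with the boundedness \eqref{Eq_Trace_Sobolev_boundedness} of $\tau_{\rm D}$ for $s=1$. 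Lax--Milgram then delivers a unique $u = \gamma(\lm)\phi\in H^1(\dR^d)$ satisfying \eqref{Eq_gamma_field_1} for every $v\in H^1(\dR^d)$, with the norm estimate $\|\gamma(\lm)\phi\|_{H^1(\dR^d)}\leq C_\lm \|\phi\|_{H^{-\frac{1}{2}}(\dR^{d-1})}$.

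To pin down $\tau_{\rm D}\gamma(\lm)$, I would compute $\gamma(\lm)\phi$ explicitly in tangential Fourier variables. Write $x = (x',x_d)\in\dR^{d-1}\times\dR$, let $\wh{\,\cdot\,}$ denote the partial Fourier transform in $x'$, and set $\kp(\xi') := \sqrt{|\xi'|^2 - \lm} \geq \sqrt{-\lm} > 0$. For $\phi$ in the dense subspace $\cS(\dR^{d-1})\subset H^{-\frac{1}{2}}(\dR^{d-1})$ I would define the candidate
\[
\wh{u}(\xi',x_d) := \frac{1}{2\kp(\xi')}\, e^{-\kp(\xi')|x_d|}\, \wh{\phi}(\xi').
\]
A Plancherel computation, using $\int_\dR e^{-2\kp|x_d|}\,\dd x_d = \kp^{-1}$ and the explicit form of $\p_{x_d}\wh{u}$, shows that each of $\|u\|_{L^2}^2$, $\|\nb_{x'} u\|_{L^2}^2$ and $\|\p_{x_d} u\|_{L^2}^2$ is controlled by a constant multiple of $\int (1+|\xi'|^2)^{-\frac{1}{2}}|\wh{\phi}(\xi')|^2\,\dd\xi' = \|\phi\|_{H^{-\frac{1}{2}}}^2$, so $u\in H^1(\dR^d)$. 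The core step is to verify that $u$ satisfies \eqref{Eq_gamma_field_1}: by Plancherel this reduces slicewise in $\xi'$ to
\[
\int_\dR \bigl[\kp(\xi')^2\,\wh{u}(\xi',x_d)\,\overline{\wh{v}(\xi',x_d)} + \p_{x_d}\wh{u}\,\overline{\p_{x_d}\wh{v}}\bigr]\,\dd x_d = \wh{\phi}(\xi')\,\overline{\wh{v}(\xi',0)},
\]
which I would establish by integrating by parts separately on $(-\infty,0)$ and $(0,\infty)$. The bulk contributions cancel because $\p_{x_d}^2\wh{u} = \kp(\xi')^2 \wh{u}$ holds classically away from zero, while the jump $\p_{x_d}\wh{u}(\xi',0^-)-\p_{x_d}\wh{u}(\xi',0^+) = \wh{\phi}(\xi')$, obtained by direct differentiation of the exponential, delivers precisely the boundary term $\wh{\phi}(\xi')\overline{\wh{v}(\xi',0)}$. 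Re-integrating in $\xi'$ and using $\widehat{\tau_{\rm D} v}(\xi') = \wh{v}(\xi',0)$ yields \eqref{Eq_gamma_field_1}; by uniqueness the Fourier candidate coincides with $\gamma(\lm)\phi$, and density extends the formula to all $\phi\in H^{-\frac{1}{2}}(\dR^{d-1})$.

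From the explicit representation, the trace formula and its $H^{\frac{1}{2}}$-boundedness are immediate. Setting $x_d = 0$ gives $\widehat{\tau_{\rm D}\gamma(\lm)\phi}(\xi') = (2\kp(\xi'))^{-1}\wh{\phi}(\xi')$, which is exactly the Fourier-multiplier description of $\tfrac12(-\Dl-\lm)^{-\frac{1}{2}}$ on $L^2(\dR^{d-1})$, and the multiplier $\kp^{-1}$ maps $H^{-\frac{1}{2}}(\dR^{d-1})$ continuously into $H^{\frac{1}{2}}(\dR^{d-1})$ because $(1+|\xi'|^2)/\kp(\xi')^2$ is bounded uniformly in $\xi'$. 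The main technical point I expect to wrestle with is the rigorous justification of the slicewise integration by parts across the kink at $x_d=0$: one must know that a.e.\ slice $\wh{v}(\xi',\cdot)$ lies in $H^1(\dR)\hookrightarrow C(\dR)$ so that $\wh{v}(\xi',0)$ makes sense pointwise, control the boundary contributions at $x_d=\pm\infty$ via the one-dimensional Sobolev embedding, and apply a Fubini-type argument to combine the slicewise identities into the $d$-dimensional one. Once these ingredients are in place the remaining steps are purely algebraic.
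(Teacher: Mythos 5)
Your argument is correct, and it takes a somewhat different route from the paper. The paper constructs $\gamma(\lambda)$ directly through the full $d$-dimensional Fourier transform, $(\cF_d\gamma(\lambda)\phi)(\tilde k)=(\cF_{d-1}\phi)(k)/(\sqrt{2\pi}(|\tilde k|^2-\lambda))$, proves the mapping property $H^{-\frac12}(\dR^{d-1})\to H^1(\dR^d)$ by an explicit integral estimate, verifies \eqref{Eq_gamma_field_1} for Schwartz functions by a Plancherel computation together with the Fourier description of the trace, and obtains \eqref{Eq_gamma_field_2} by integrating the representation over $k_d$; uniqueness is read off from the identity itself. You instead get existence, uniqueness and the norm bound at once from Lax--Milgram applied to the coercive form $b_\lambda$, and then identify the solution with the explicit slicewise kernel $(2\kappa(\xi'))^{-1}e^{-\kappa(\xi')|x_d|}\widehat\phi(\xi')$ (which is precisely the inverse Fourier transform in $k_d$ of the paper's formula), verifying the weak identity by integration by parts across $x_d=0$ via the jump of $\partial_{x_d}\widehat u$; the trace formula then drops out by evaluating at $x_d=0$, with the multiplier bound $(1+|\xi'|^2)/\kappa(\xi')^2\le 1+1/(-\lambda)$ giving the $H^{-\frac12}\to H^{\frac12}$ boundedness. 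What your route buys is that uniqueness and the a priori bound are structural (variational) rather than computational, and the trace formula becomes transparent without computing $\int_\dR \dd k_d/(|\tilde k|^2-\lambda)$; what the paper's route buys is that all objects live in one Fourier picture, so the only technical point is the Fourier description \eqref{Eq_Trace_in_Fourierspace} of $\tau_{\rm D}$, handled for Schwartz functions and extended by density. The technical issues you flag (a.e.\ slices $\widehat v(\xi',\cdot)\in H^1(\dR)$, identification of $\widehat{\tau_{\rm D}v}(\xi')$ with $\widehat v(\xi',0)$, Fubini) are exactly the analogue of that step and are genuine but routine; you could streamline them, as the paper does, by verifying your slicewise identity only for $\phi\in\mathcal S(\dR^{d-1})$ and $v\in\mathcal S(\dR^d)$ and then extending by density and continuity, since both sides of \eqref{Eq_gamma_field_1} are continuous in $(\phi,v)\in H^{-\frac12}(\dR^{d-1})\times H^1(\dR^d)$.
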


\begin{proof}
Let $\cF_d$ and $\cF_{d-1}$ be the unitary Fourier transforms in $L^2(\mathbb{R}^d)$ and $L^2(\mathbb{R}^{d-1})$, respectively, and consider Schwartz functions $\phi\in\mathcal{S}(\mathbb{R}^{d-1})$. We first define the operator $\gamma(\lambda)$ in Fourier space as
\begin{equation}\label{Eq_gamma_field}
(\cF_d\gamma(\lambda)\phi)(\tilde{k})\coloneqq\frac{(\cF_{d-1}\phi)(k)}{\sqrt{2\pi}(|\tilde{k}|^2-\lambda)},\quad\tilde{k}=(k,k_d)\in\mathbb{R}^{d-1}\times\mathbb{R}.
\end{equation}
As $\lambda<0$ and $\cF_{d-1}\phi\in\mathcal{S}(\mathbb{R}^{d-1})$, this is a well defined function in $L^2(\mathbb{R}^d)$. The fact, that $\gamma(\lambda)$ is bounded from $H^{-\frac{1}{2}}(\mathbb{R}^{d-1})$ to $H^1(\mathbb{R}^d)$ follows from the estimate
\begin{equation*}
\begin{split}
\Vert\gamma(\lambda)\phi\Vert_{H^1(\mathbb{R}^d)}^2&=\frac{1}{2\pi}\int_{\mathbb{R}^d}(1+|\tilde{k}|^2)\frac{|(\cF_{d-1}\phi)(k)|^2}{(|\tilde{k}|^2-\lambda)^2}\dd\tilde{k} \\
&=\frac{1}{2\pi}\int_{\mathbb{R}^{d-1}}\int_\mathbb{R}\frac{1+|k|^2+k_d^2}{(|k|^2+k_d^2-\lambda)^2}\dd k_d|(\cF_{d-1}\phi)(k)|^2\dd k \\
&=\frac{1}{4}\int_{\mathbb{R}^{d-1}}\frac{2|k|^2+1-\lambda}{(|k|^2-\lambda)^{\frac{3}{2}}}|(\cF_{d-1}\phi)(k)|^2\dd k \\
&\leq\frac{c(\lambda)}{4}\Vert\phi\Vert_{H^{-\frac{1}{2}}(\mathbb{R}^{d-1})}^2,
\end{split}
\end{equation*}
where $c(\lambda)$ denotes the maximum of the function $k\mapsto\frac{(2|k|^2+1-\lambda)(|k|^2+1)^{1/2}}{(|k|^2-\lambda)^{3/2}}$. Since $\mathcal{S}(\mathbb{R}^{d-1})$ is dense in $H^{-\frac{1}{2}}(\mathbb{R}^{d-1})$ the operator $\gamma(\lambda)$ can be extended by continuity onto $H^{-\frac{1}{2}}(\mathbb{R}^{d-1})$.

\medskip

In order to prove the identity \eqref{Eq_gamma_field_1} for Schwartz functions $\phi\in\mathcal{S}(\mathbb{R}^{d-1})$ and $v\in\mathcal{S}(\mathbb{R}^d)$, we use the Fourier representation
\begin{equation}\label{Eq_nabla_in_Fourierspace}
(\cF_d\nabla v)(\tilde{k})=i\tilde{k}(\cF_dv)(\tilde{k}),\quad\tilde{k}\in\mathbb{R}^d,
\end{equation}
of the gradient. For $x\in\mathbb{R}^{d-1}$ the trace can be written as
\begin{equation}\label{Eq_Trace_in_Fourierspace1}
\begin{split}
(\tau_{\rm D}v)(x)&=(\cF_d^{-1}\cF_dv)(x,0)=\frac{1}{(2\pi)^{\frac{d}{2}}}\int_{\mathbb{R}^d}e^{i\langle\tilde{k},(x,0)\rangle}(\cF_dv)(\tilde{k})\dd\tilde{k} \\
&=\frac{1}{(2\pi)^{\frac{d}{2}}}\int_{\mathbb{R}^{d-1}}e^{i\langle k,x\rangle}\int_\mathbb{R}(\cF_dv)(k,k_d)\dd k_d\dd k \\
&=\frac{1}{\sqrt{2\pi}}\cF_{d-1}^{-1}\left[\int_\mathbb{R}(\cF_dv)(\,\cdot\,,k_d)\dd k_d\right](x)
\end{split}
\end{equation}
and hence 
\begin{equation}\label{Eq_Trace_in_Fourierspace}
(\cF_{d-1}\tau_{\rm D}v)(k)=\frac{1}{\sqrt{2\pi}}\int_\mathbb{R}(\cF_dv)(k,k_d)\dd k_d,\quad k\in\mathbb{R}^{d-1}.
\end{equation}
The definition \eqref{Eq_gamma_field} of $\gamma(\lambda)$, together with 
\eqref{Eq_nabla_in_Fourierspace} and \eqref{Eq_Trace_in_Fourierspace} leads to
\begin{align*}
&\big(\nabla\gamma(\lambda)\phi,\nabla v\big)_{L^2(\mathbb{R}^d;{\dC^d})}-\lambda\big(\gamma(\lambda)\phi,v\big)_{L^2(\mathbb{R}^d)} \\
&\hspace{3cm}=\int_{\mathbb{R}^d}(|\tilde{k}|^2-\lambda)\,(\cF_d\gamma(\lambda)\phi)(\tilde{k})\,\overline{(\cF_d v)(\tilde{k})}\,\dd\tilde{k} \\
&\hspace{3cm}=\frac{1}{\sqrt{2\pi}}\int_{\mathbb{R}^d}
(\cF_{d-1}\phi)(k)\,\overline{(\cF_dv)(k,k_d)}\,\dd k_d\dd k \\
&\hspace{3cm}=\int_{\mathbb{R}^{d-1}}(\cF_{d-1}\phi)(k)\,\overline{(\cF_{d-1}\tau_{\rm D}v)(k)}\,\dd k \\
&\hspace{3cm}=(\phi,\tau_{\rm D}v)_{L^2(\mathbb{R}^{d-1})} \\
&\hspace{3cm}=\langle\phi,\tau_{\rm D}v\rangle_{H^{-\frac{1}{2}}(\mathbb{R}^{d-1})\times H^{\frac{1}{2}}(\mathbb{R}^{d-1})},
\end{align*}
and hence \eqref{Eq_gamma_field_1} holds for $\phi\in\mathcal{S}(\mathbb{R}^{d-1})$ and $v\in\mathcal{S}(\mathbb{R}^d)$. By density and continuity 
this identity extends to all $\phi\in H^{-\frac{1}{2}}(\mathbb{R}^{d-1})$ and $v\in H^1(\mathbb{R}^d)$. Also note, that the identity \eqref{Eq_gamma_field_1} uniquely defines the operator $\gamma(\lambda)$.

\medskip

For the proof of \eqref{Eq_gamma_field_2}
note first that the identity \eqref{Eq_Trace_in_Fourierspace} and its derivation \eqref{Eq_Trace_in_Fourierspace1} remain valid for 
$v\in H^1(\mathbb{R}^d)\cap\mathcal{C}(\mathbb{R}^d)$ with $\mathcal{F}_dv\in L^1(\mathbb{R}^d)$. In particular, for $\phi\in\mathcal{S}(\mathbb{R}^{d-1})$ it is not difficult to see that $\mathcal{F}_d\gamma(\lambda)\phi\in L^1(\mathbb{R}^d)$ by its definition \eqref{Eq_gamma_field} and hence also that $\gamma(\lambda)\phi=\mathcal{F}_d^{-1}\mathcal{F}_d\gamma(\lambda)\phi$ is continuous as the inverse Fourier transform of an $L^1$-function. This means that 
from \eqref{Eq_Trace_in_Fourierspace} we get
\begin{align*}
(\cF_{d-1}\tau_{\rm D}\gamma(\lambda)\phi)(k)&=\frac{1}{\sqrt{2\pi}}\int_\mathbb{R}(\cF_d\gamma(\lambda)\phi)(k,k_d)\dd k_d\\
&=\frac{(\cF_{d-1}\phi)(k)}{2\pi}\int_\mathbb{R}\frac{\dd k_d}{|\tilde{k}|^2-\lambda}=\frac{(\cF_{d-1}\phi)(k)}{2(|k|^2-\lambda)^{\frac{1}{2}}},
\end{align*}
which is exactly equation \eqref{Eq_gamma_field_2} in Fourier space. Again, by continuity this identity also holds for every $\phi\in H^{-\frac{1}{2}}(\mathbb{R}^{d-1})$.
\end{proof}

With this lemma we now find a connection between the eigenvalue $0$ of the relativistic Schrödinger operator $\sfD_{\alpha,\lambda}$ and the eigenvalue $\lambda$ of the Schrödinger operator $\sfA_\alpha$.

\begin{prop}\label{prop_Equivalence_of_eigenvalues}
For every $\lambda<0$ the restriction of the Dirichlet trace operator
\begin{equation}\label{Eq_Equivalence_of_eigenvalues}
\tau_{\rm D}:\ker(\sfA_\alpha-\lambda)\rightarrow\ker\sfD_{\alpha,\lambda}
\end{equation}
is bijective and, in particular, $\dim\ker(\sfA_\alpha-\lambda)=\dim\ker\sfD_{\alpha,\lambda}$.
\end{prop}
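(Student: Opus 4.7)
The plan is to use the $\gamma$-field from Lemma~\ref{lem_gamma_field} to set up a direct one-to-one correspondence between $\ker(\sfA_\alpha-\lambda)$ and $\ker\sfD_{\alpha,\lambda}$. First I would observe that for $u\in\ker(\sfA_\alpha-\lambda)$ the identity \eqref{dada} together with the form \eqref{Eq_Schroedinger_form} gives
\begin{equation*}
(\nabla u,\nabla v)_{L^2(\dR^d;\dC^d)}-\lambda(u,v)_{L^2(\dR^d)}=\int_\Sigma\alpha\,\tau_{\rm D}u\,\overline{\tau_{\rm D}v}\,\dd x,\qquad v\in H^1(\dR^d),
\end{equation*}
and the right-hand side is the duality pairing $\langle\alpha\,\tau_{\rm D}u,\tau_{\rm D}v\rangle_{H^{-1/2}(\dR^{d-1})\times H^{1/2}(\dR^{d-1})}$; indeed, the multiplier estimate \eqref{Eq_Potential_boundedness_hyperplane} combined with \eqref{Eq_Trace_Sobolev_boundedness} ensures $\alpha\,\tau_{\rm D}u\in H^{-1/2}(\dR^{d-1})$. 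Comparing this with \eqref{Eq_gamma_field_1} applied to $\phi=\alpha\,\tau_{\rm D}u$, I would conclude that $w:=u-\gamma(\lambda)(\alpha\,\tau_{\rm D}u)\in H^1(\dR^d)$ satisfies $(\nabla w,\nabla v)-\lambda(w,v)=0$ for every $v\in H^1(\dR^d)$; coercivity, guaranteed by $\lambda<0$, forces $w=0$, so
\begin{equation*}
u=\gamma(\lambda)\bigl(\alpha\,\tau_{\rm D}u\bigr).
\end{equation*}

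Injectivity of \eqref{Eq_Equivalence_of_eigenvalues} is then immediate: $\tau_{\rm D}u=0$ implies $u=\gamma(\lambda)(0)=0$. To see that the image lies in $\ker\sfD_{\alpha,\lambda}$, I would apply $\tau_{\rm D}$ to the preceding identity and use \eqref{Eq_gamma_field_2} to obtain
\begin{equation*}
\phi:=\tau_{\rm D}u=\tfrac12(-\Dl-\lambda)^{-1/2}(\alpha\phi),
\end{equation*}
which rearranges to $2(-\Dl-\lambda)^{1/2}\phi=\alpha\phi$ in $H^{-1/2}(\dR^{d-1})$. Since this is precisely the condition that $\mathfrak{d}_{\alpha,\lambda}[\phi,\psi]=0$ for all $\psi\in H^{1/2}(\dR^{d-1})$, the first representation theorem applied to the form \eqref{Eq_Relativistic_form} yields $\phi\in\dom\sfD_{\alpha,\lambda}$ and $\sfD_{\alpha,\lambda}\phi=0$.

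For surjectivity I would reverse the construction. Given $\phi\in\ker\sfD_{\alpha,\lambda}\subset H^{1/2}(\dR^{d-1})$, set $u:=\gamma(\lambda)(\alpha\phi)\in H^1(\dR^d)$; here $\alpha\phi\in H^{-1/2}(\dR^{d-1})$ by the same multiplier estimate. Then \eqref{Eq_gamma_field_2}, combined with the equation $\tfrac12(-\Dl-\lambda)^{-1/2}(\alpha\phi)=\phi$ characterizing $\ker\sfD_{\alpha,\lambda}$, delivers $\tau_{\rm D}u=\phi$. Substituting back via \eqref{Eq_gamma_field_1} then gives
\begin{equation*}
\mathfrak{a}_\alpha[u,v]-\lambda(u,v)_{L^2(\dR^d)}=\langle\alpha\phi,\tau_{\rm D}v\rangle-\int_\Sigma\alpha\phi\,\overline{\tau_{\rm D}v}\,\dd x=0
\end{equation*}
for every $v\in H^1(\dR^d)$, and via \eqref{dada} this means $u\in\dom\sfA_\alpha$ with $(\sfA_\alpha-\lambda)u=0$. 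The main technical point that will require care is the translation between the ``operator'' equation $2(-\Dl-\lambda)^{1/2}\phi=\alpha\phi$ in $H^{-1/2}(\dR^{d-1})$ and the form definition of $\sfD_{\alpha,\lambda}$: one has to read the first term of $\mathfrak{d}_{\alpha,\lambda}$ as the duality pairing $\langle 2(-\Dl-\lambda)^{1/2}\phi,\psi\rangle_{H^{-1/2}\times H^{1/2}}$ and verify that $\alpha\phi$ indeed belongs to $H^{-1/2}(\dR^{d-1})$, so that all identifications above are consistent.
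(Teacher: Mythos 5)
Your argument is correct, and its backbone is the same as the paper's: everything rests on the $\gamma$-field of Lemma~\ref{lem_gamma_field} together with the trace formula \eqref{Eq_gamma_field_2}, injectivity comes from coercivity for $\lambda<0$, and surjectivity from feeding the kernel equation for $\phi$ back into \eqref{Eq_gamma_field_1} (your $u=\gamma(\lambda)(\alpha\phi)$ coincides with the paper's $u_\phi=2\gamma(\lambda)(-\Delta-\lambda)^{\frac12}\phi$ precisely because $2(-\Delta-\lambda)^{\frac12}\phi=\alpha\phi$ in $H^{-\frac12}(\dR^{d-1})$). The genuine difference is in the ``maps into'' direction. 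The paper deliberately avoids interpreting $\alpha\,\tau_{\rm D}u$ as an element of $H^{-\frac12}(\dR^{d-1})$: it approximates $\sgn(\alpha)|\alpha|^{\frac12}\tau_{\rm D}u$ in $L^2(\dR^{d-1})$ by functions $\psi_n\in H^{\frac12}(\dR^{d-1})$, inserts $|\alpha|^{\frac12}\psi_n\in L^2(\dR^{d-1})$ into \eqref{Eq_gamma_field_1}, and passes to weak limits in $H^1(\dR^d)$ and $L^2(\dR^{d-1})$ to conclude $\mathfrak{d}_{\alpha,\lambda}[\tau_{\rm D}u,\psi]=0$. You instead use that multiplication by $\alpha$ is bounded from $H^{\frac12}(\dR^{d-1})$ to $H^{-\frac12}(\dR^{d-1})$, which indeed follows from \eqref{Eq_Potential_boundedness_hyperplane} by the duality estimate $|\int\alpha\phi\overline\psi\,\dd x|\le\||\alpha|^{\frac12}\phi\|_{L^2}\,\||\alpha|^{\frac12}\psi\|_{L^2}$; this one-line lemma should be stated explicitly, since the paper never records it. The payoff of your route is the exact representation $u=\gamma(\lambda)(\alpha\,\tau_{\rm D}u)$, which yields injectivity for free, lets you read off the equation $2(-\Delta-\lambda)^{\frac12}\phi=\alpha\phi$ for $\phi=\tau_{\rm D}u$ directly from \eqref{Eq_gamma_field_2}, and makes the two directions of the bijection symmetric; the price is the extra multiplier bound and the identification of the first term of \eqref{Eq_Relativistic_form} with the pairing $\langle 2(-\Delta-\lambda)^{\frac12}\phi,\psi\rangle_{H^{-\frac12}\times H^{\frac12}}$ (which you correctly flag, and which the paper also uses implicitly in its surjectivity step). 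Both routes invoke the converse part of the first representation theorem to pass from the vanishing of the forms to membership in the operator kernels, so there is no gap there.
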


\begin{proof}
In order to see that the restriction of $\tau_{\rm D}$ onto $\ker(\sfA_\alpha-\lambda)$ maps into $\ker\sfD_{\alpha,\lambda}$ consider some $u\in\ker(\sfA_\alpha-\lambda)$. By \eqref{Eq_Schroedinger_form} we have $u\in H^1(\mathbb{R}^d)$ and
\begin{equation}\label{Eq_Equivalence_of_eigenspaces_1}
(\nabla u,\nabla v)_{L^2(\mathbb{R}^d;{\dC^d})}-\lambda(u,v)_{L^2(\mathbb{R}^d)}=\big(\sgn(\alpha)|\alpha|^{\frac{1}{2}}\tau_{\rm D}u,|\alpha|^{\frac{1}{2}}\tau_{\rm D}v\big)_{L^2(\mathbb{R}^{d-1})}
\end{equation}
for all $v\in H^1(\mathbb{R}^d)$. Since $\tau_{\rm D}u\in H^{\frac{1}{2}}(\mathbb{R}^{d-1})$, we get $|\alpha|^{\frac{1}{2}}\tau_{\rm D}u\in L^2(\mathbb{R}^{d-1})$ from \eqref{Eq_Potential_boundedness_hyperplane} and hence there exist  $\psi_n\in H^{\frac{1}{2}}(\mathbb{R}^{d-1})$ such that 
\begin{equation}\label{Eq_Equivalence_of_eigenspaces_2}
{\sgn(\alpha)}|\alpha|^{\frac{1}{2}}\tau_{\rm D}u=\lim\limits_{n\rightarrow\infty}\psi_n\quad\text{in }L^2(\mathbb{R}^{d-1}).
\end{equation}
Again, by (\ref{Eq_Potential_boundedness_hyperplane}), we have $|\alpha|^{\frac{1}{2}}\psi_n\in L^2(\mathbb{R}^{d-1})$ and inserting these 
into \eqref{Eq_gamma_field_1} leads to
\begin{equation*}
\big(\nabla\gamma(\lambda)|\alpha|^{\frac{1}{2}}\psi_n,\nabla v\big)_{L^2(\mathbb{R}^d;{\dC^d})}-\lambda\big(\gamma(\lambda)|\alpha|^{\frac{1}{2}}\psi_n,v\big)_{L^2(\mathbb{R}^d)}=\big(\psi_n,|\alpha|^{\frac{1}{2}}\tau_{\rm D}v\big)_{L^2(\mathbb{R}^{d-1})}
\end{equation*}
for all $v\in H^1(\mathbb{R}^d)$.
Combining this with \eqref{Eq_Equivalence_of_eigenspaces_1} and \eqref{Eq_Equivalence_of_eigenspaces_2} implies the 
convergence
\begin{equation*}
\gamma(\lambda)|\alpha|^{\frac{1}{2}}\psi_n\rightharpoonup u\quad\text{weakly in }H^1(\mathbb{R}^d).
\end{equation*}
Applying the bounded operator $(-\Delta-\lambda)^{\frac{1}{4}}\tau_{\rm D}:H^1(\mathbb{R}^d)\rightarrow L^2(\mathbb{R}^{d-1})$ and using (\ref{Eq_gamma_field_2}) gives
\begin{equation*}
\frac{1}{2}(-\Delta-\lambda)^{-\frac{1}{4}}|\alpha|^{\frac{1}{2}}\psi_n=(-\Delta-\lambda)^{\frac{1}{4}}\tau_{\rm D}\gamma(\lambda)|\alpha|^{\frac{1}{2}}\psi_n\rightharpoonup(-\Delta-\lambda)^{\frac{1}{4}}\tau_{\rm D}u
\end{equation*}
weakly in $L^2(\mathbb{R}^{d-1})$. Hence, for every $\psi\in H^{\frac{1}{2}}(\mathbb{R}^{d-1})$ we get
\begin{align*}
\mathfrak{d}_{\alpha,\lambda}[\tau_{\rm D}u,\psi]&=\lim\limits_{n\rightarrow\infty}\big((-\Delta-\lambda)^{-\frac{1}{4}}|\alpha|^{\frac{1}{2}}\psi_n,(-\Delta-\lambda)^{\frac{1}{4}}\psi\big)_{L^2(\mathbb{R}^{d-1})}-\int_{\mathbb{R}^{d-1}}\alpha\,\tau_{\rm D}u\,\overline{\psi}\,\dd x \\
&=\lim\limits_{n\rightarrow\infty}\big(\psi_n,|\alpha|^{\frac{1}{2}}\psi\big)_{L^2(\mathbb{R}^{d-1})}-\int_{\mathbb{R}^{d-1}}\alpha\,\tau_{\rm D}u\,\overline{\psi}\,\dd x=0,
\end{align*}
where \eqref{Eq_Equivalence_of_eigenspaces_2} was used in the last step. Thus, we conclude $\tau_{\rm D}u\in\ker\sfD_{\alpha,\lambda}$.

\medskip

Next we show that \eqref{Eq_Equivalence_of_eigenvalues} is injective. In fact, assume that $\tau_{\rm D}u=0$ for some $u\in\ker(\sfA_\alpha-\lambda)$. Then 
\eqref{Eq_Schroedinger_form} leads to
\begin{equation*}
(\nabla u,\nabla v)_{L^2(\mathbb{R}^d;{\dC^d})}=\lambda(u,v)_{L^2(\mathbb{R}^{d-1})},\quad v\in H^1(\mathbb{R}^d).
\end{equation*}
Since $\lambda<0$ we can choose $v=u$ and conclude $u=0$.

\medskip

For the surjectivity of \eqref{Eq_Equivalence_of_eigenvalues} let $\phi\in\ker\sfD_{\alpha,\lambda}$. By \eqref{Eq_Relativistic_form} we have 
$\phi\in H^{\frac{1}{2}}(\mathbb{R}^{d-1})$ and
\begin{equation}\label{Eq_Equivalence_of_eigenspaces_4}
2\big((-\Delta-\lambda)^{\frac{1}{4}}\phi,(-\Delta-\lambda)^{\frac{1}{4}}\psi\big)_{L^2(\mathbb{R}^{d-1})}=\int_{\mathbb{R}^{d-1}}\alpha\,\phi\,\overline{\psi}\,\dd x,\quad\psi\in H^{\frac{1}{2}}(\mathbb{R}^{d-1}).
\end{equation}
Now define $u_\phi\coloneqq 2\gamma(\lambda)(-\Delta-\lambda)^{\frac{1}{2}}\phi$. Then $\tau_{\rm D}u_\phi=\phi$ by \eqref{Eq_gamma_field_2} and using  \eqref{Eq_gamma_field_1} with $\phi$ replaced by $2(-\Delta-\lambda)^{\frac{1}{2}}\phi$, gives for any $v\in H^1(\dR^d)$
\begin{align*}
\big(\nabla u_\phi,\nabla v\big)_{L^2(\mathbb{R}^d;{\mathbb{C}^d})}-\lambda\,\big(u_\phi,v\big)_{L^2(\mathbb{R}^d)}&=2\langle(-\Delta-\lambda)^{\frac{1}{2}}\phi,\tau_{\rm D}v\rangle_{H^{-\frac{1}{2}}(\mathbb{R}^{d-1})\times H^{\frac{1}{2}}(\mathbb{R}^{d-1})} \\
&=\int_{\mathbb{R}^{d-1}}\alpha\,\phi\,\overline{\tau_{\rm D}v}\,\dd x \\
&=\int_{\mathbb{R}^{d-1}}\alpha\,\tau_{\rm D}u_\phi\,\overline{\tau_{\rm D}v}\,\dd x,
\end{align*}
where in the second step we used \eqref{Eq_Equivalence_of_eigenspaces_4} with $\psi=\tau_{\rm D}v$. Summing up, for $\phi\in\ker\sfD_{\alpha,\lambda}$ we found  $u_\phi\in\ker(\sfA_\alpha-\lambda)$ such that $\tau_{\rm D}u_\phi=\phi$, that is, the mapping \eqref{Eq_Equivalence_of_eigenvalues} is surjective.
\end{proof}

Next we analyse how the bottom of the spectrum $\sigma(\sfD_{\alpha,\lambda})$ behaves as a function of $\lambda<0$.

\begin{lem}\label{lem_Eigenvalue_function}
For $\lambda<0$ the mapping
\begin{equation}\label{Eq_Eigenvalue_function}
\lambda\mapsto \mu_\aa(\lambda)\coloneqq\inf\s(\sfD_{\aa,\lm}) = 
\inf\limits_{0\neq\phi\in H^{1/2}(\dR^{d-1})}\frac{\mathfrak{d}_{\alpha,\lambda}[\phi]}{\Vert\phi\Vert^2_{L^2(\mathbb{R}^{d-1})}}
\end{equation}
is nonincreasing, continuous and admits the limit
\begin{equation}\label{Eq_Eigenvalue_function_limit}
\lim\limits_{\lambda\rightarrow-\infty}\mu_\aa(\lambda)=\infty.
\end{equation}
\end{lem}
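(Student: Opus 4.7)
The plan is to work in Fourier space, where the kinetic term takes the explicit form
\[
2\|(-\Dl-\lm)^{1/4}\phi\|_{L^2(\dR^{d-1})}^2 = 2\int_{\dR^{d-1}}(|k|^2-\lm)^{1/2}|(\cF_{d-1}\phi)(k)|^2\,\dd k,
\]
and to read the three properties off from the dependence of the scalar function $\lm\mapsto (|k|^2-\lm)^{1/2}$.

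First, for monotonicity: if $\lm_1<\lm_2<0$, then $(|k|^2-\lm_1)^{1/2}\ge(|k|^2-\lm_2)^{1/2}$ pointwise, so $\fra_{\aa,\lm_1}[\phi]\ge\fra_{\aa,\lm_2}[\phi]$ for every $\phi\in H^{1/2}(\dR^{d-1})$ (the $\aa$-term does not depend on $\lm$). Taking the infimum over normalised $\phi$ yields $\mu_\aa(\lm_1)\ge\mu_\aa(\lm_2)$.

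Next, for the limit \eqref{Eq_Eigenvalue_function_limit}: I split the kinetic term in half and use two lower bounds. For $\lm\le -1$ we have $(|k|^2-\lm)^{1/2}\ge(1+|k|^2)^{1/2}$, hence
\[
\|(-\Dl-\lm)^{1/4}\phi\|_{L^2}^2\ge \|\phi\|_{H^{1/2}(\dR^{d-1})}^2,
\]
while trivially $(|k|^2-\lm)^{1/2}\ge(-\lm)^{1/2}$ gives $\|(-\Dl-\lm)^{1/4}\phi\|_{L^2}^2\ge (-\lm)^{1/2}\|\phi\|_{L^2}^2$. Apply the relative form bound \eqref{Eq_Potential_boundedness_hyperplane} with $\varepsilon=\tfrac12$ to obtain $\int|\aa||\phi|^2\,\dd x\le \tfrac14\|\phi\|_{H^{1/2}}^2+c_{1/2}^2\|\phi\|_{L^2}^2$. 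Combining these,
\[
\fra_{\aa,\lm}[\phi]\ge \bigl(1-\tfrac14\bigr)\|\phi\|_{H^{1/2}}^2+\bigl((-\lm)^{1/2}-c_{1/2}^2\bigr)\|\phi\|_{L^2}^2\ge \bigl((-\lm)^{1/2}-c_{1/2}^2\bigr)\|\phi\|_{L^2}^2,
\]
which forces $\mu_\aa(\lm)\ge(-\lm)^{1/2}-c_{1/2}^2\to\infty$.

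Finally, for continuity at a fixed $\lm_0<0$: by monotonicity it suffices to show the left and right limits equal $\mu_\aa(\lm_0)$. For the right limit, let $\lm_n\downarrow\lm_0$ and pick normalised near-minimisers $\phi_n\in H^{1/2}(\dR^{d-1})$ with $\fra_{\aa,\lm_n}[\phi_n]\le\mu_\aa(\lm_n)+\tfrac1n$. From
\[
(|k|^2-\lm_0)^{1/2}-(|k|^2-\lm_n)^{1/2}=\int_{\lm_0}^{\lm_n}\frac{\dd t}{2(|k|^2-t)^{1/2}}\le \frac{\lm_n-\lm_0}{2(|k|^2-\lm_n)^{1/2}}
\]
and, once $\lm_n\le\lm_0/2$, the uniform bound $(|k|^2-\lm_n)^{1/2}\ge(|\lm_0|/2)^{1/2}$, I obtain a $k$-uniform estimate that gives
\[
0\le\fra_{\aa,\lm_0}[\phi_n]-\fra_{\aa,\lm_n}[\phi_n]\le \frac{\lm_n-\lm_0}{(|\lm_0|/2)^{1/2}}\xrightarrow{n\to\infty}0.
\]
Hence $\mu_\aa(\lm_0)\le\fra_{\aa,\lm_0}[\phi_n]\le\mu_\aa(\lm_n)+o(1)$, so $\mu_\aa(\lm_0)\le\lim_{\lm\downarrow\lm_0}\mu_\aa(\lm)$, which combined with monotonicity gives equality. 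The left-continuity is analogous (now the difference $\fra_{\aa,\lm_n}[\phi_n]-\fra_{\aa,\lm_0}[\phi_n]$ is what I bound, using $\lm_n<\lm_0$ and the same Lipschitz estimate). The only mild obstacle is securing this \emph{uniform} Lipschitz bound in $k$; the trick is simply to stay in a compact $\lm$-neighbourhood of $\lm_0$ bounded away from $0$, which transforms the integrand's pointwise Lipschitz behaviour into a genuinely uniform one.
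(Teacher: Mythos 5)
Your proposal is correct and follows essentially the same route as the paper: the Fourier-space representation of $\mathfrak{d}_{\alpha,\lambda}$ gives monotonicity pointwise in the symbol, the relative bound \eqref{Eq_Potential_boundedness_hyperplane} yields $\mu_\alpha(\lambda)\geq\sqrt{-\lambda}-c_{1/2}^2\to\infty$, and continuity comes from a $\phi$-uniform bound on $\mathfrak{d}_{\alpha,\lambda_1}[\phi]-\mathfrak{d}_{\alpha,\lambda_2}[\phi]$. The only cosmetic difference is that the paper avoids your near-minimiser argument by using the global estimate $(|k|^2-\lambda_1)^{1/2}-(|k|^2-\lambda_2)^{1/2}\leq\sqrt{-\lambda_1}-\sqrt{-\lambda_2}$, which gives the explicit modulus $0\leq\mu_\alpha(\lambda_1)-\mu_\alpha(\lambda_2)\leq2\big(\sqrt{-\lambda_1}-\sqrt{-\lambda_2}\big)$ directly (and note your forms should be denoted $\mathfrak{d}_{\alpha,\lambda}$, not $\mathfrak{a}_{\alpha,\lambda}$).
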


\begin{proof}
With the help of the Fourier transform in $L^2(\mathbb{R}^{d-1})$ we see that the form $\mathfrak{d}_{\alpha,\lambda}$ admits the representation 
\begin{equation}\label{Eq_Properties_gn_1}
\mathfrak{d}_{\alpha,\lambda}[\phi]=2\int_{\mathbb{R}^{d-1}}(|k|^2-\lambda)^{\frac{1}{2}}|(\cF_{d-1}\phi)(k)|^2\dd k-\int_{\mathbb{R}^{d-1}}\alpha\,|\phi|^2\,\dd x,\quad\phi\in H^{\frac{1}{2}}(\mathbb{R}^{d-1}),
\end{equation}
which shows that $\mathfrak{d}_{\alpha,\lambda}[\phi]$ is nonincreasing in $\lambda$. Hence the same is true for $\mu_\alpha$ in \eqref{Eq_Eigenvalue_function}.

\medskip

For the continuity of $\mu_\alpha$ consider $\lambda_1\leq\lambda_2<0$. Then for every $\phi\in H^{\frac{1}{2}}(\mathbb{R}^{d-1})$ we can estimate the difference
\begin{align*}
\mathfrak{d}_{\alpha,\lambda_1}[\phi]-\mathfrak{d}_{\alpha,\lambda_2}[\phi]&=2\int_{\mathbb{R}^{d-1}}\big((|k|^2-\lambda_1)^{\frac{1}{2}}-(|k|^2-\lambda_2)^{\frac{1}{2}}\big)|(\cF_{d-1}\phi)(k)|^2\dd k \\
&\leq 2\big(\sqrt{-\lambda_1}-\sqrt{-\lambda_2}\big)\,\Vert\phi\Vert^2_{L^2(\mathbb{R}^{d-1})},
\end{align*}
and via \eqref{Eq_Eigenvalue_function} we also conclude 
\begin{equation*}
\mu_\aa(\lm_1)-\mu_\aa(\lm_2)\leq 2\big(\sqrt{-\lambda_1}-\sqrt{-\lambda_2}\big),
\end{equation*}
which proves the continuity of $\lambda\mapsto\mu_\aa(\lm)$.

\medskip

It remains to verify \eqref{Eq_Eigenvalue_function_limit}. For this we use the estimate
\begin{equation*}
\Big|\int_{\mathbb{R}^{d-1}}\alpha\,|\phi|^2\dd x\Big|\leq\Vert\phi\Vert^2_{H^{\frac{1}{2}}(\mathbb{R}^{d-1})}+c_1^2\Vert\phi\Vert^2_{L^2(\mathbb{R}^{d-1})},\quad\phi\in H^{\frac{1}{2}}(\mathbb{R}^{d-1}),
\end{equation*}
from \eqref{Eq_Potential_boundedness_hyperplane}. Plugging this in \eqref{Eq_Properties_gn_1} gives
\begin{align*}
\mathfrak{d}_{\alpha,\lambda}[\phi]&\geq\int_{\mathbb{R}^{d-1}}\big(2(|k|^2-\lambda)^{\frac{1}{2}}-(1+|k|^2)^{\frac{1}{2}}\big)|(\cF_{d-1}\phi)(k)|^2\dd k-c_1^2\Vert\phi\Vert^2_{L^2(\mathbb{R}^{d-1})} \\
&\geq(c(\lambda)-c_1^2)\Vert\phi\Vert^2_{L^2(\mathbb{R}^{d-1})},
\end{align*}
where $c(\lambda)\in\mathbb{R}$ is the minimum of $k\mapsto 2(|k|^2-\lambda)^{\frac{1}{2}}-(1+|k|^2)^{\frac{1}{2}}$. From \eqref{Eq_Eigenvalue_function} we then conclude
\begin{equation*}
\mu_\aa(\lm)\geq c(\lambda)-c_1^2\overset{\lambda\rightarrow-\infty}{\longrightarrow}\infty.\qedhere
\end{equation*}
\end{proof}

Next, we compute the essential spectrum of $\sfD_{\alpha,\lambda}$ under the additional assumption that $\alpha$ satisfies the decay condition \eqref{Eq_Decay_property_relativistic}.

\begin{prop}\label{prop_ess}
Assume that $\alpha$ satisfies \eqref{Eq_Decay_property_relativistic} with some $\alpha_0\in\mathbb R$. Then for every $\lambda<0$ the essential spectrum of $\sfD_{\alpha,\lambda}$ is given by
\begin{equation}\label{Eq_Essential_spectrum_D}
\sess(\sfD_{\alpha,\lambda})=\big[2\sqrt{-\lambda}-\alpha_0,\infty\big).
\end{equation}
Furthermore, the mapping $\lambda\mapsto\mu_\alpha(\lambda)$ from \eqref{Eq_Eigenvalue_function} is strictly decreasing on $(-\infty,0)$.
\end{prop}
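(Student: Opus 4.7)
The plan is to establish the essential spectrum identity by reducing to the case of constant strength $\alpha_0$, and then to obtain strict monotonicity as a short corollary. Let $\sfD_{\alpha_0,\lambda}$ denote the self-adjoint operator associated with $\mathfrak{d}_{\alpha_0,\lambda}$, that is, the form obtained by replacing $\alpha$ by the constant $\alpha_0$. The Fourier representation \eqref{Eq_Properties_gn_1} shows that $\cF_{d-1}\sfD_{\alpha_0,\lambda}\cF_{d-1}^{-1}$ acts as multiplication by $m_\lambda(k):=2(|k|^2-\lambda)^{1/2}-\alpha_0$ on $L^2(\mathbb{R}^{d-1})$; since $m_\lambda$ is continuous with range $[2\sqrt{-\lambda}-\alpha_0,\infty)$ this yields $\sigma(\sfD_{\alpha_0,\lambda})=\sess(\sfD_{\alpha_0,\lambda})=[2\sqrt{-\lambda}-\alpha_0,\infty)$.

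To compare $\sfD_{\alpha,\lambda}$ with $\sfD_{\alpha_0,\lambda}$, put $\alpha_1:=\alpha-\alpha_0$, which by \eqref{Eq_Decay_property_relativistic} satisfies that $\{|\alpha_1|>\varepsilon\}$ has finite measure for every $\varepsilon>0$. Mimicking Step~3 of the proof of Theorem~\ref{thm:Essential_spectrum}, pick $\zeta<\inf\bigl(\sigma(\sfD_{\alpha_0,\lambda})\cup\sigma(\sfD_{\alpha,\lambda})\bigr)$ and, for $f,g\in L^2(\mathbb{R}^{d-1})$, set $u:=(\sfD_{\alpha_0,\lambda}-\zeta)^{-1}f$ and $v:=(\sfD_{\alpha,\lambda}-\zeta)^{-1}g$. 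Expressing the arising inner products via the forms $\mathfrak{d}_{\alpha,\lambda}$ and $\mathfrak{d}_{\alpha_0,\lambda}$ gives
\[
\bigl(\bigl[(\sfD_{\alpha_0,\lambda}-\zeta)^{-1}-(\sfD_{\alpha,\lambda}-\zeta)^{-1}\bigr]f,g\bigr)_{L^2(\mathbb{R}^{d-1})}=-\int_{\mathbb{R}^{d-1}}\alpha_1\,u\,\overline{v}\,\dd x=(\sfT_1 f,\sfT_2 g)_{L^2(\mathbb{R}^{d-1})},
\]
with $\sfT_1:=|\alpha_1|^{1/2}(\sfD_{\alpha_0,\lambda}-\zeta)^{-1}$ and $\sfT_2:=-\sgn(\alpha_1)|\alpha_1|^{1/2}(\sfD_{\alpha,\lambda}-\zeta)^{-1}$. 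Both resolvents map $L^2(\mathbb{R}^{d-1})$ boundedly into the form domain $H^{1/2}(\mathbb{R}^{d-1})$, and multiplication by $|\alpha_1|^{1/2}\colon H^{1/2}(\mathbb{R}^{d-1})\to L^2(\mathbb{R}^{d-1})$ is compact by Proposition~\ref{prop_Malpha_compactness} applied with the hypersurface $\mathbb{R}^{d-1}$. Hence $\sfT_1,\sfT_2$ are compact, so is the resolvent difference $\sfT_2^*\sfT_1$, and stability of the essential spectrum yields $\sess(\sfD_{\alpha,\lambda})=\sess(\sfD_{\alpha_0,\lambda})=[2\sqrt{-\lambda}-\alpha_0,\infty)$.

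For the strict decrease fix arbitrary $\lambda_1<\lambda_2<0$. If $\mu_\alpha(\lambda_1)<2\sqrt{-\lambda_1}-\alpha_0$, then by the already proven formula for $\sess$ the value $\mu_\alpha(\lambda_1)$ is an isolated eigenvalue with some eigenfunction $\phi_1\in H^{1/2}(\mathbb{R}^{d-1})\setminus\{0\}$; since $2(|k|^2-\lambda_1)^{1/2}-2(|k|^2-\lambda_2)^{1/2}>0$ pointwise and $\cF_{d-1}\phi_1$ is nonzero on a set of positive measure, inserting $\phi_1$ as a test function in the variational principle for $\mu_\alpha(\lambda_2)$ gives
\[
\mu_\alpha(\lambda_2)\le\frac{\mathfrak{d}_{\alpha,\lambda_2}[\phi_1]}{\|\phi_1\|^2_{L^2(\mathbb{R}^{d-1})}}<\frac{\mathfrak{d}_{\alpha,\lambda_1}[\phi_1]}{\|\phi_1\|^2_{L^2(\mathbb{R}^{d-1})}}=\mu_\alpha(\lambda_1).
\]
Otherwise $\mu_\alpha(\lambda_1)=2\sqrt{-\lambda_1}-\alpha_0$, and then $\mu_\alpha(\lambda_2)\le\inf\sess(\sfD_{\alpha,\lambda_2})=2\sqrt{-\lambda_2}-\alpha_0<\mu_\alpha(\lambda_1)$, completing the argument.

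The main obstacle is the compactness of the resolvent difference: it hinges on the interplay between the decay hypothesis \eqref{Eq_Decay_property_relativistic} and the Sobolev regularity of the form domain of $\sfD_{\alpha,\lambda}$, both of which are packaged into Proposition~\ref{prop_Malpha_compactness}. Once the essential spectrum is pinned down, strict monotonicity is essentially a free consequence of a short case split according to whether $\mu_\alpha(\lambda)$ lies strictly below the essential bottom or coincides with it.
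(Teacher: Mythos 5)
Your proposal is correct and follows essentially the same route as the paper: the constant-coefficient case via the Fourier symbol $2(|k|^2-\lambda)^{1/2}-\alpha_0$, compactness of the resolvent difference through the factorization $\sfT_2^*\sfT_1$ with Proposition~\ref{prop_Malpha_compactness} applied to $\alpha_1=\alpha-\alpha_0$, and the same two-case argument (discrete eigenvalue versus bottom of the essential spectrum) for strict monotonicity of $\mu_\alpha$. No gaps to report.
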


\begin{proof}
It is clear that for the special case of a constant $\alpha(x)=\alpha_0\in\mathbb R$ the relativistic Schrödinger operator is given by $\sfD_{\alpha_0,\lambda}=2(-\Delta-\lambda)^{\frac{1}{2}}-\alpha_0$ with $\dom \sfD_{\alpha_0,\lambda}=H^1(\mathbb{R}^{d-1})$. Hence 
we have
\begin{equation}\label{juhu}
\sigma(\sfD_{\alpha_0,\lambda})=\sess(\sfD_{\aa_0,\lm})=\big[2\sqrt{-\lambda}-\alpha_0,\infty\big).
\end{equation}

For nonconstant $\alpha$ we define $\alpha_1(x)\coloneqq\alpha(x)-\alpha_0$. Then $\{x\in\mathbb{R}^{d-1} \,|\, \vert\alpha_1(x)\vert>\varepsilon\}$ has finite measure for every $\varepsilon>0$ by the decay property \eqref{Eq_Decay_property_relativistic}. To prove \eqref{Eq_Essential_spectrum_D} we proceed in the same way as in Step 3 of the proof of Theorem~\ref{prop_ess} and check that for some $\mu<\inf(\sigma(\sfD_{\alpha_0,\lambda})\cup\sigma(\sfD_{\alpha,\lambda}))$ the resolvent difference
\begin{equation*}
\sfW\coloneqq(\sfD_{\alpha_0,\lambda}-\mu)^{-1}-(\sfD_{\alpha,\lambda}-\mu)^{-1}
\end{equation*}
is a compact operator in $L^2(\mathbb{R}^{d-1})$. For this let $\phi,\psi\in L^2(\mathbb{R}^{d-1})$ and set
\begin{equation*}
\phi_\mu\coloneqq(\sfD_{\alpha_0,\lambda}-\mu)^{-1}\phi\quad\text{and}\quad\psi_\mu\coloneqq(\sfD_{\alpha,\lambda}-\mu)^{-1}\psi.
\end{equation*}
In the same way as in \eqref{Eq_W_reduction} one verifies
\begin{align*}
(\sfW\phi,\psi)_{L^2(\mathbb{R}^{d-1})}&=\big(\phi_\mu,\sfD_{\alpha,\lambda}\psi_\mu\big)_{L^2(\mathbb{R}^{d-1})}-\big(\sfD_{\alpha_0,\lambda}\phi_\mu,\psi_\mu\big)_{L^2(\mathbb{R}^{d-1})}\\
&=-\int_{\mathbb{R}^{d-1}}\alpha_1\,\phi_\mu\overline{\psi_\mu}\dd x \\
&=(\sfT_1\phi,\sfT_2\psi)_{L^2(\mathbb{R}^{d-1})},
\end{align*}
where 
\begin{equation*}
\sfT_1\coloneqq|\alpha_1|^{\frac{1}{2}}(\sfD_{\alpha_0,\lambda}-\mu)^{-1}\quad\text{and}\quad\sfT_2\coloneqq-\sgn(\alpha_1)|\alpha_1|^{\frac{1}{2}}(\sfD_{\alpha,\lambda}-\mu)^{-1}.
\end{equation*}
As $(\sfD_{\alpha_0,\lambda}-\mu)^{-1}$ and $(\sfD_{\alpha,\lambda}-\mu)^{-1}$ are bounded operators from $L^2(\mathbb{R}^{d-1})$ into $H^{\frac{1}{2}}(\mathbb{R}^{d-1})$ it follows from Proposition \ref{prop_Malpha_compactness} that both $\sfT_1$ and $\sfT_2$ are compact operators in $L^2(\mathbb{R}^{d-1})$. Thus the resolvent difference $\sfW=\sfT_2^*\sfT_1$ is compact as well, which implies $\sess(\sfD_{\alpha_0,\lambda})=\sess(\sfD_{\alpha,\lambda})$ and \eqref{Eq_Essential_spectrum_D} follows from \eqref{juhu}.

\medskip

For the proof of the strict monotonicity of $\lambda\mapsto\mu_\alpha(\lambda)$, let $\lm_1<\lm_2<0$. Then 
\begin{equation}\label{eq:mu_aa_est}
\mu_\aa(\lm_j)\le 2\sqrt{-\lm_j}-\aa_0, \qquad j=1,2,
\end{equation}
by \eqref{Eq_Essential_spectrum_D}. If $\mu_\aa(\lm_1)=2\sqrt{-\lm_1}-\aa_0$ we conclude from $\mu_\aa(\lm_2)\le 2\sqrt{-\lm_2}-\aa_0$ that $\mu_\aa(\lm_2)<\mu_\aa(\lm_1)$. If $\mu_\aa(\lm_1)<2\sqrt{-\lm_1}-\aa_0$ we know from \eqref{Eq_Essential_spectrum_D} that $\mu_\alpha(\lambda_1)$ is a discrete eigenvalue of $\sfD_{\aa,\lm_1}$ and hence there is a corresponding eigenfunction $\phi\in\dom\sfD_{\aa,\lm_1}\subset H^{\frac12}(\dR^{d-1})$. Since, in particular, $\phi\neq 0$ we conclude from \eqref{Eq_Properties_gn_1} that $\lambda\mapsto\mathfrak{d}_{\alpha,\lambda}[\phi]$ is strictly decreasing, and hence
\begin{equation*}
\mu_\alpha(\lambda_1)=\frac{\mathfrak{d}_{\alpha,\lambda_1}[\phi]}{\Vert\phi\Vert^2_{L^2(\mathbb{R}^{d-1})}}>\frac{\mathfrak{d}_{\alpha,\lambda_2}[\phi]}{\Vert\phi\Vert^2_{L^2(\mathbb{R}^{d-1})}}\geq\mu_\alpha(\lambda_2).\qedhere
\end{equation*}
\end{proof}

\begin{lem}\label{lem_Lowest_eigenvalue}
Assume that $\alpha$ satisfies \eqref{Eq_Decay_property_relativistic} with some $\alpha_0\in\mathbb R$.
For the lowest spectral point $\lambda_1(\alpha)$ of $\sfA_\alpha$ in \eqref{eq:lowest} and the lowest spectral point $\mu_\alpha(\lambda)$ of $\sfD_{\alpha,\lambda}$ in \eqref{Eq_Eigenvalue_function} the following are equivalent:

\begin{itemize}
\item[{\rm (i)}] $\lambda_1(\alpha)\in\sigma_{\rm d}(\sfA_\alpha)$

\item[{\rm (ii)}] $\mu_\alpha$ admits a zero strictly below $\left\{\begin{array}{ll} -\frac{\alpha_0^2}{4}, & \text{if }\alpha_0\geq 0 \\ 0, & \text{if }\alpha_0\leq 0. \end{array}\right.$ 
\end{itemize}

In this situation the zero of $\mu_\alpha$ coincides with $\lambda_1(\alpha)$.
\end{lem}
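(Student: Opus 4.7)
The plan is to run both implications through the Birman--Schwinger correspondence of Proposition \ref{prop_Equivalence_of_eigenvalues}, combined with the description of $\sess(\sfD_{\alpha,\lambda})$ from Proposition \ref{prop_ess} and the continuity/strict monotonicity of $\mu_\alpha$. The crucial preparatory observation is: whenever $\lambda$ lies strictly below the threshold appearing in (ii) (either $\lambda<-\alpha_0^2/4$ for $\alpha_0\geq 0$, or $\lambda<0$ for $\alpha_0\leq 0$), one checks directly that $2\sqrt{-\lambda}-\alpha_0>0$, so by Proposition \ref{prop_ess} the point $0$ sits strictly below $\sess(\sfD_{\alpha,\lambda})$. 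Consequently, for such $\lambda$ any occurrence of $0$ in $\sigma(\sfD_{\alpha,\lambda})$ is automatically a discrete eigenvalue, and in particular $\mu_\alpha(\lambda)=0$ implies $0\in\sp(\sfD_{\alpha,\lambda})$.

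To prove (i)$\Rightarrow$(ii), I would assume $\lambda_1(\alpha)\in\sigma_{\rm d}(\sfA_\alpha)$. Theorem \ref{thm:Essential_spectrum} (applied with $\xi\equiv 0$) places $\lambda_1(\alpha)$ strictly below the threshold, and Proposition \ref{prop_Equivalence_of_eigenvalues} gives $0\in\sp(\sfD_{\alpha,\lambda_1(\alpha)})$, so $\mu_\alpha(\lambda_1(\alpha))\leq 0$. Suppose for contradiction that $\mu_\alpha(\lambda_1(\alpha))<0$. Since $\mu_\alpha$ is continuous on $(-\infty,0)$ and $\mu_\alpha(\lambda)\to\infty$ as $\lambda\to-\infty$ by Lemma \ref{lem_Eigenvalue_function}, the intermediate value theorem yields some $\lambda_0<\lambda_1(\alpha)$ with $\mu_\alpha(\lambda_0)=0$. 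This $\lambda_0$ is also strictly below the threshold, so the preparatory observation gives $0\in\sp(\sfD_{\alpha,\lambda_0})$, and Proposition \ref{prop_Equivalence_of_eigenvalues} translates this into $\lambda_0\in\sp(\sfA_\alpha)$. But $\lambda_0<\lambda_1(\alpha)=\inf\sigma(\sfA_\alpha)$, a contradiction. Hence $\mu_\alpha(\lambda_1(\alpha))=0$.

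For (ii)$\Rightarrow$(i), I would start from a zero $\lambda_0$ of $\mu_\alpha$ lying strictly below the threshold. By the preparatory observation $\mu_\alpha(\lambda_0)=0$ forces $0\in\sp(\sfD_{\alpha,\lambda_0})$, so Proposition \ref{prop_Equivalence_of_eigenvalues} gives $\lambda_0\in\sp(\sfA_\alpha)$. By Theorem \ref{thm:Essential_spectrum} this $\lambda_0$ lies strictly below $\sess(\sfA_\alpha)$, so it is a discrete eigenvalue of $\sfA_\alpha$; therefore $\lambda_1(\alpha)\leq\lambda_0<\inf\sess(\sfA_\alpha)$, which in turn forces $\lambda_1(\alpha)$ itself to be a discrete eigenvalue. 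The identity that $\lambda_1(\alpha)$ equals the zero of $\mu_\alpha$ then follows, since the forward direction shows $\mu_\alpha(\lambda_1(\alpha))=0$ and the strict monotonicity from Proposition \ref{prop_ess} guarantees that $\mu_\alpha$ has at most one zero.

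The main obstacle is the intermediate value step in the forward implication, where one must exclude $\mu_\alpha(\lambda_1(\alpha))<0$; this step combines continuity, the limit $\mu_\alpha(\lambda)\to\infty$ as $\lambda\to-\infty$, and the observation that the candidate zero $\lambda_0$ produced by the intermediate value theorem automatically still lies strictly below the threshold, so that the Birman--Schwinger correspondence can be reapplied to contradict $\lambda_1(\alpha)=\inf\sigma(\sfA_\alpha)$.
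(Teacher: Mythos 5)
Your proposal is correct and follows essentially the same route as the paper: both directions go through the Birman--Schwinger correspondence of Proposition \ref{prop_Equivalence_of_eigenvalues}, the key observation that $2\sqrt{-\lambda}-\alpha_0>0$ strictly below the threshold (so that $\mu_\alpha(\lambda)=0$ forces a discrete eigenvalue of $\sfD_{\alpha,\lambda}$), and the continuity/limit statement of Lemma \ref{lem_Eigenvalue_function} to rule out $\mu_\alpha(\lambda_1(\alpha))<0$ by producing a zero below $\lambda_1(\alpha)$, with uniqueness of the zero from the strict monotonicity in Proposition \ref{prop_ess}. The paper's argument is the same in all essential steps.
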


\begin{proof}
For an easier notation we write $\lambda_1\coloneqq\lambda_1(\alpha)$. For the implication ${\rm (i)}\Rightarrow{\rm (ii)}$ let $\lambda_1\in\sigma_{\rm d}(\sfA_\alpha)$ and note that due to the explicit form of the essential spectrum \eqref{Eq_Essential_spectrum} we have
\begin{equation}\label{Eq_lambda1_estimate}
\lambda_1<\left\{\begin{array}{ll} -\frac{\alpha_0^2}{4}, & \text{if }\alpha_0\geq 0, \\ 0, & \text{if }\alpha_0\leq 0. \end{array}\right.
\end{equation}
It follows from Proposition \ref{prop_Equivalence_of_eigenvalues} that zero is an eigenvalue of $\sfD_{\alpha,\lambda_1}$. Assume now $\mu_\alpha(\lambda_1)\neq 0$.

\begin{itemize}
\item The case $\mu_\alpha(\lambda_1)=\inf\sigma(\sfD_{\alpha,\lambda_1})>0$ is a contradiction to the fact that zero is an eigenvalue of $\sfD_{\aa,\lm_1}$.

\item If $\mu_\alpha(\lambda_1)<0$, then $\mu_\alpha(\tilde{\lambda})=0$ for some $\tilde{\lambda}<\lambda_1$ by Lemma \ref{lem_Eigenvalue_function}. Also note, that 
\begin{equation*}
\inf\sess(\sfD_{\alpha,\tilde{\lambda}})= 2\sqrt{-\tilde{\lambda}}-\alpha_0\geq 2\sqrt{-\lambda_1}-\alpha_0>0
\end{equation*}
by Proposition \ref{prop_ess} and the estimate \eqref{Eq_lambda1_estimate}. But then the bottom of the spectrum
\begin{equation*}
0=\mu_\alpha(\tilde{\lambda})=\inf\sigma(\sfD_{\alpha,\tilde{\lambda}})
\end{equation*}
is a point in the discrete spectrum and hence an eigenvalue of $\sfD_{\alpha,\tilde{\lambda}}$. Consequently, Proposition \ref{prop_Equivalence_of_eigenvalues} implies that $\tilde{\lambda}<\lambda_1$ is an eigenvalue of $\sfA_\alpha$; a contradiction as $\lambda_1$ is the smallest spectral point of $\sfA_\alpha$.
\end{itemize}

Hence our assumption is wrong and we conclude $\mu_\alpha(\lambda_1)=0$. Due to the strict monotonicity in Proposition \ref{prop_ess}, this is also the only zero of $\mu_\alpha$.

For the implication ${\rm(ii)}\Rightarrow{\rm(i)}$ assume that $\mu_\alpha$ admits a zero
\begin{equation}\label{Eq_lambdatilde_estimate}
\tilde{\lambda}<\left\{\begin{array}{ll} -\frac{\alpha_0^2}{4}, & \text{if }\alpha_0\geq 0, \\ 0, & \text{if }\alpha_0\leq 0, \end{array}\right.
\end{equation}
that is, $0=\mu_\alpha(\tilde{\lambda})=\inf\sigma(\sfD_{\alpha,\tilde{\lambda}})$. Since $2\sqrt{-\tilde{\lambda}}-\alpha_0>0$ by \eqref{Eq_lambdatilde_estimate} we conclude from \eqref{Eq_Essential_spectrum_D} that zero belongs to the discrete spectrum of $\sfD_{\alpha,\tilde{\lambda}}$, and hence Proposition~\ref{prop_Equivalence_of_eigenvalues} implies that $\tilde{\lambda}$ is an eigenvalue of $\sfA_\alpha$. Hence, also the bottom of the spectrum
\begin{equation*}
\lambda_1=\inf\sigma(\sfA_\alpha)\leq\tilde{\lambda}<\left\{\begin{array}{ll} -\frac{\alpha_0^2}{4}, & \text{if }\alpha_0\geq 0, \\ 0, & \text{if }\alpha_0\leq 0, \end{array}\right.
\end{equation*}
belongs to the discrete spectrum of $\sfA_\alpha$ by \eqref{Eq_Essential_spectrum}.
\end{proof}

\subsection{Optimization of $\lambda_1(\aa)$ and the symmetric decreasing rearrangement.}\label{sec_opti}

In this subsection we prove an optimization result for the bottom of the spectrum of $\sfA_\alpha$, which will be formulated in terms of the so-called symmetric decreasing rearrangement of the positive part of the function $\alpha_1(x)\coloneqq\alpha(x)-\alpha_0$, with $\alpha_0\in\mathbb{R}$ from \eqref{Eq_Decay_property_relativistic}. We first briefly recall the definition and some basic properties of the symmetric decreasing rearrangement and formulate our main result in Theorem~\ref{thm:main2} below. Further details on symmetric decreasing rearrangements can be found  in the monographs~\cite{B,LiLo2001}.

\medskip

Let $\cA\subseteq\mathbb{R}^{d-1}$, $d\geq 2$, be a measurable set of finite volume. Then its \emph{symmetric rearrangement} $\cA^*$ is defined as the open ball centered at the origin and having the same volume. Let $u\colon\dR^{d-1}\arr\dR$ be a nonnegative measurable function, that vanishes at infinity in the sense that
\begin{equation}\label{Eq_Decay_property_u}
\Set{x\in\mathbb{R}^{d-1} | u(x)>t}\text{ has finite measure for every }t>0.
\end{equation}
We define the \emph{symmetric decreasing rearrangement} $u^*$ of $u$ by symmetrizing its level sets as
\begin{equation}\label{Eq_Symmetric_decreasing_rearrangement}
u^*(x)\coloneqq\int_0^\infty\chi_{\{u>t\}^*}(x)\,\dd t.
\end{equation}
Here $\chi_\cA\colon\dR^{d-1}\arr\dR$ denotes the characteristic function. The rearrangement $u^*$ has a number of straightforward properties, which will be needed below in the proofs of Theorem~\ref{thm:main2} and Lemma~\ref{lem_Properties_rearrangement}; cf. \cite[Section 3.3 (iv) and Theorem 3.4]{LiLo2001}.

\begin{lem}\label{lem:rearr}
Let $u,v\colon\dR^{d-1}\arr\dR$ be nonnegative measurable functions satisfying \eqref{Eq_Decay_property_u}. Then the following holds:

\begin{myenum}
\item $u^*$ is nonnegative;
\item $u^*$ is radially symmetric and nonincreasing;
\item $u$ and $u^*$ are equi-measurable, i.e.,
\begin{equation*}
\big|\Set{x\in\dR^{d-1} | u(x)>t}\big|=\big|\Set{x\in\dR^{d-1} | u^*(x)>t}\big|,\quad t>0;
\end{equation*}
\item $(u^*)^2=(u^2)^*$.
\item $\Vert u\Vert_{L^p(\dR^{d-1})}=\Vert u^*\Vert_{L^p(\dR^{d-1})}$,\quad $p\geq 1$\quad (Conservation of $L^p$-norm);
\item $\int_{\dR^{d-1}}u\,v\,\dd x\leq\int_{\dR^{d-1}}u^*v^*\dd x$\quad (Hardy-Littlewood inequality).
\end{myenum}
\end{lem}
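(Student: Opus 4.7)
My plan is to deduce items (i)--(v) rather quickly from the definition \eqref{Eq_Symmetric_decreasing_rearrangement} and the layer-cake representation, and to treat the Hardy--Littlewood inequality (vi) as the main substantial step. Note first that the decay condition \eqref{Eq_Decay_property_u} guarantees that $|\{u>t\}|$ is finite for every $t>0$, so each symmetric rearrangement $\{u>t\}^*$ is a genuine open ball in $\dR^{d-1}$ and \eqref{Eq_Symmetric_decreasing_rearrangement} is well defined. For (i) and (ii) I observe that for every fixed $t>0$ the function $x\mapsto\chi_{\{u>t\}^*}(x)$ is nonnegative, radially symmetric, and nonincreasing in $|x|$, and these three properties are preserved under integration in $t$. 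For (iii) the key point is that the strict superlevel sets of $u^*$ coincide, up to null sets, with the balls $\{u>t\}^*$, which follows from \eqref{Eq_Symmetric_decreasing_rearrangement} together with the nesting $\{u>s\}^*\supseteq\{u>t\}^*$ for $s<t$; since $|\{u>t\}^*|=|\{u>t\}|$ by the very definition of the symmetric rearrangement of a set, equi-measurability is immediate.

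Item (iv) I would obtain from the uniqueness of the symmetric decreasing rearrangement among radially symmetric, nonincreasing representatives: both $(u^*)^2$ and $(u^2)^*$ are nonnegative, radially symmetric, nonincreasing, and equi-measurable with $u^2$, the former because $\{(u^*)^2>t\}=\{u^*>\sqrt t\}$ and $\{u^2>t\}=\{u>\sqrt t\}$ have the same measure by (iii). Item (v) is then immediate from the layer-cake identity
\[
\int_{\dR^{d-1}}|u|^p\,\dd x=p\int_0^\infty t^{p-1}|\{u>t\}|\,\dd t,\qquad p\ge 1,
\]
applied to both $u$ and $u^*$ and invoking (iii).

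The main work, and the only point where a genuinely geometric argument enters, is the Hardy--Littlewood inequality (vi). My strategy is the standard double layer-cake representation
\[
\int_{\dR^{d-1}}u\,v\,\dd x=\int_0^\infty\!\!\int_0^\infty|\{u>s\}\cap\{v>t\}|\,\dd s\,\dd t,
\]
together with the analogous identity in which $u,v$ are replaced by $u^*,v^*$. Comparing integrands in $(s,t)$ reduces everything to the set-theoretic estimate
\[
|\{u>s\}\cap\{v>t\}|\le\min\bigl(|\{u>s\}|,|\{v>t\}|\bigr)=|\{u^*>s\}\cap\{v^*>t\}|,
\]
where the equality exploits that $\{u^*>s\}$ and $\{v^*>t\}$ are concentric open balls, so their intersection is simply the smaller ball. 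This geometric comparison of level sets is the essential content; integration in $(s,t)$ then delivers (vi).
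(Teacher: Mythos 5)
Your proof is correct, but note that the paper does not actually prove Lemma~\ref{lem:rearr}: it simply refers to \cite[Section 3.3 and Theorem 3.4]{LiLo2001}, so there is no internal argument to compare with, and what you have written is essentially the standard Lieb--Loss proof (layer-cake definition for (i)--(iii) and (v), double layer-cake representation plus the fact that concentric balls intersect in the smaller one for (vi)). The only step I would tighten is (iv): equimeasurability does not by itself single out a unique radially symmetric nonincreasing representative --- two such functions can differ on the spheres where the radial profile jumps, e.g.\ the indicators of an open ball and of the corresponding closed ball --- so ``uniqueness among radially symmetric, nonincreasing representatives'' needs a lower-semicontinuity normalization or should be bypassed. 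The cleaner route, which your argument for (iii) already contains, is the exact set identity $\{u^*>t\}=\{u>t\}^*$ for all $t>0$: by nesting, $\{u^*>t\}=\bigcup_{s>t}\{u>s\}^*$ is again a centered open ball, of volume $\lim_{s\downarrow t}|\{u>s\}|=|\{u>t\}|$ by continuity of the measure. Applying this to both $u$ and $u^2$ gives $\{(u^*)^2>t\}=\{u^*>\sqrt t\,\}=\{u>\sqrt t\,\}^*=\{u^2>t\}^*=\{(u^2)^*>t\}$ for every $t>0$, and since a nonnegative function is recovered pointwise from its superlevel sets via the layer-cake formula, $(u^*)^2=(u^2)^*$ exactly, not merely almost everywhere. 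With that adjustment all six items are fully justified, and you rightly observe that the decay assumption \eqref{Eq_Decay_property_u} is precisely what makes each $\{u>t\}^*$ a ball of finite radius, so that \eqref{Eq_Symmetric_decreasing_rearrangement} is well defined.
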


Next we formulate our optimization result for the bottom of the spectrum of $\sfA_\alpha$.

\begin{thm}\label{thm:main2} 
Assume that $\alpha$ satisfies \eqref{Eq_Decay_property_relativistic} with some $\alpha_0\in\mathbb R$ and let $\alpha_1(x)\coloneqq\alpha(x)-\alpha_0$. Then we have the inequality
\begin{equation*}
\lambda_1(\alpha_0+(\alpha_1)_+^*)\leq\lambda_1(\alpha_0+\alpha_1),
\end{equation*}
where $(\alpha_1)_+^*$ is the symmetric decreasing rearrangement of the positive part $(\alpha_1)_+\coloneqq\max\{\alpha_1,0\}$ defined in \eqref{Eq_Symmetric_decreasing_rearrangement}. 
\end{thm}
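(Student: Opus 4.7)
The plan is to reduce the inequality, via the Birman--Schwinger link established in Proposition \ref{prop_Equivalence_of_eigenvalues} and Lemma \ref{lem_Lowest_eigenvalue}, to a comparison of the lowest spectral points $\mu_{\alpha_0+\alpha_1}(\lambda)$ and $\mu_{\alpha_0+(\alpha_1)_+^*}(\lambda)$ of the relativistic Schr\"odinger operators on $L^2(\mathbb{R}^{d-1})$. This is natural since symmetric decreasing rearrangement is defined on $\mathbb{R}^{d-1}$, which is exactly where $\sfD_{\alpha,\lambda}$ acts. First I would observe that $(\alpha_1)_+^*$, being equi-measurable with $(\alpha_1)_+$, vanishes at infinity together with it, so $\alpha_0+(\alpha_1)_+^*$ satisfies the decay assumption \eqref{Eq_Decay_property_relativistic} with the same constant $\alpha_0$; in particular $\sfA_{\alpha_0+\alpha_1}$ and $\sfA_{\alpha_0+(\alpha_1)_+^*}$ share the same essential spectrum by Theorem \ref{thm:Essential_spectrum}. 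If $\lambda_1(\alpha_0+\alpha_1)$ coincides with the bottom of this common essential spectrum, the claim is immediate because the same value is already an upper bound for $\lambda_1(\alpha_0+(\alpha_1)_+^*)$; I may therefore assume that $\lambda_1\coloneqq\lambda_1(\alpha_0+\alpha_1)$ is a discrete eigenvalue lying strictly below the bottom of the essential spectrum.

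By Lemma \ref{lem_Lowest_eigenvalue} we then have $\mu_{\alpha_0+\alpha_1}(\lambda_1)=0$, and since $0$ is strictly below $\inf\sess(\sfD_{\alpha_0+\alpha_1,\lambda_1})=2\sqrt{-\lambda_1}-\alpha_0$ by Proposition \ref{prop_ess}, this infimum is attained at some $\phi\in H^{\frac{1}{2}}(\mathbb{R}^{d-1})$. The positivity of the heat kernel $p_t$ yields $|e^{t\Delta}\phi|\le e^{t\Delta}|\phi|$, hence $\|e^{t\Delta}\phi\|_{L^2}\le\|e^{t\Delta}|\phi|\|_{L^2}$, and together with a Balakrishnan subordination representation of $(-\Delta-\lambda_1)^{\frac{1}{2}}$ this gives
\[
\bigl\langle|\phi|,(-\Delta-\lambda_1)^{\frac{1}{2}}|\phi|\bigr\rangle_{L^2}\le\bigl\langle\phi,(-\Delta-\lambda_1)^{\frac{1}{2}}\phi\bigr\rangle_{L^2};
\]
since the potential term depends only on $|\phi|^2$, I may assume $\phi\ge 0$. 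Let $\phi^*$ denote its symmetric decreasing rearrangement. For the potential term, $\alpha_1\le(\alpha_1)_+$ combined with Lemma~\ref{lem:rearr}~(iv),~(v),~(vi) yields
\[
\int_{\mathbb{R}^{d-1}}(\alpha_0+\alpha_1)\phi^2\,\dd x\le\int_{\mathbb{R}^{d-1}}(\alpha_0+(\alpha_1)_+)\phi^2\,\dd x\le\int_{\mathbb{R}^{d-1}}(\alpha_0+(\alpha_1)_+^*)(\phi^*)^2\,\dd x,
\]
while the relativistic P\'olya--Szeg\H{o} inequality (Lemma \ref{lem_Properties_rearrangement}) delivers
\[
\bigl\|(-\Delta-\lambda_1)^{\frac{1}{4}}\phi^*\bigr\|^2_{L^2(\mathbb{R}^{d-1})}\le\bigl\|(-\Delta-\lambda_1)^{\frac{1}{4}}\phi\bigr\|^2_{L^2(\mathbb{R}^{d-1})}.
\]
Combining both estimates gives $\mathfrak{d}_{\alpha_0+(\alpha_1)_+^*,\lambda_1}[\phi^*]\le\mathfrak{d}_{\alpha_0+\alpha_1,\lambda_1}[\phi]=0$, and hence $\mu_{\alpha_0+(\alpha_1)_+^*}(\lambda_1)\le 0$.

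To finish, I invoke Lemma \ref{lem_Eigenvalue_function}: the map $\lambda\mapsto\mu_{\alpha_0+(\alpha_1)_+^*}(\lambda)$ is continuous and nonincreasing on $(-\infty,0)$ and tends to $+\infty$ as $\lambda\to-\infty$, so by the intermediate value theorem it vanishes at some $\tilde\lambda\le\lambda_1$. Since $\tilde\lambda\le\lambda_1$ is strictly below the common essential-spectrum bottom, the implication (ii)$\Rightarrow$(i) of Lemma \ref{lem_Lowest_eigenvalue} identifies $\tilde\lambda$ with $\lambda_1(\alpha_0+(\alpha_1)_+^*)$, which proves the theorem.

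The main obstacle in this plan is the relativistic P\'olya--Szeg\H{o} step, i.e.\ Lemma \ref{lem_Properties_rearrangement}. I would establish it through the subordination formula $(|k|^2+m^2)^{\frac{1}{2}}=\frac{1}{2\sqrt{\pi}}\int_0^\infty(1-e^{-t(|k|^2+m^2)})t^{-\frac{3}{2}}\,\dd t$ with $m=\sqrt{-\lambda_1}$, which rewrites the kinetic energy in Fourier variables as a positively weighted integral of $\|\phi\|_{L^2}^2-e^{-tm^2}\|e^{t\Delta/2}\phi\|_{L^2}^2$, thereby reducing the claim to the rearrangement monotonicity $\|e^{s\Delta}\phi^*\|_{L^2}\ge\|e^{s\Delta}\phi\|_{L^2}$ of the heat semigroup for $\phi\ge 0$, itself a direct consequence of the Riesz rearrangement inequality applied to the Gaussian kernel.
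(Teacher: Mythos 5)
Your proposal is correct and follows the paper's overall strategy: reduce the problem via the Birman--Schwinger correspondence to the relativistic form $\mathfrak{d}_{\alpha,\lambda}$, use the rearrangement inequality of Lemma~\ref{lem_Properties_rearrangement} to conclude $\mu_{\alpha_0+(\alpha_1)_+^*}(\lambda_1)\le 0$, and return to $\sfA_\alpha$ through the monotonicity and continuity of $\mu$ (Lemma~\ref{lem_Eigenvalue_function}) and Lemma~\ref{lem_Lowest_eigenvalue}. The genuine difference lies in how the nonnegative trial function is produced. The paper takes the strictly positive ground state $u_1$ of $\sfA_{\alpha_0+\alpha_1}$ furnished by Theorem~\ref{thm_Uniqueness_groundstate} (a Harnack-inequality argument) and transfers it to $\phi_1=\tau_{\rm D}u_1\in\ker\sfD_{\alpha_0+\alpha_1,\lambda_1}$ via Proposition~\ref{prop_Equivalence_of_eigenvalues}; you instead work entirely on the boundary: since $\mu_{\alpha_0+\alpha_1}(\lambda_1)=0$ lies strictly below $\inf\sess(\sfD_{\alpha_0+\alpha_1,\lambda_1})=2\sqrt{-\lambda_1}-\alpha_0$ by Proposition~\ref{prop_ess}, a minimizer $\phi$ exists, and your subordination/heat-kernel-positivity argument shows that replacing $\phi$ by $|\phi|$ does not increase the kinetic term while leaving the potential term and the $L^2$-norm unchanged, so the trial function may be taken nonnegative. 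This makes the optimization result independent of the ground-state positivity theorem for $\sfA_\alpha$, at the price of an extra (standard) subordination lemma; the paper's route is shorter once Theorem~\ref{thm_Uniqueness_groundstate} is in hand, and it additionally exhibits the optimizing trace as the trace of the actual ground state. Your closing sketch of the kinetic inequality \eqref{Eq_Properties_rearrangement2} via the subordination formula and the Riesz rearrangement inequality for the Gaussian kernel is a valid substitute for the citation to Lieb--Loss used in the proof of Lemma~\ref{lem_Properties_rearrangement}, and the remaining steps --- equi-measurability giving the same decay condition and hence the same essential spectrum, the trivial case where $\lambda_1(\alpha_0+\alpha_1)$ sits at the essential-spectrum threshold, the potential estimate via Lemma~\ref{lem:rearr}, and the identification of the zero of $\mu_{\alpha_0+(\alpha_1)_+^*}$ with $\lambda_1(\alpha_0+(\alpha_1)_+^*)$ --- coincide with the paper's argument.
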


\begin{cor}\label{qqq}
Let $\omg\subset\dR^{d-1}$ be a set of finite measure and $\omg^*\subset\dR^{d-1}$ be a ball with the same volume as $\omg$, and let  $\chi_{\omg}$ and $\chi_{\omg^*}$ be the characteristic functions of $\omg$ and $\omg^*$, respectively. Then for $\beta\geq 0$ we have the inequality
\begin{equation*}
\lambda_1(\beta\chi_{\omg^*})\leq\lambda_1(\beta\chi_\omg).
\end{equation*}
\end{cor}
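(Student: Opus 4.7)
The plan is a direct reduction to Theorem~\ref{thm:main2} with the parameter choice $\alpha_0=0$ and $\alpha=\beta\chi_\omg$. First I would verify that the hypotheses apply: since $\omg$ has finite measure, $\beta\chi_\omg$ is bounded with finite-measure support, so $\beta\chi_\omg\in L^p(\dR^{d-1})$ for every $p\geq 1$ and in particular for any admissible $p$ from the standing assumptions. With $\alpha_0=0$, the decay hypothesis \eqref{Eq_Decay_property_relativistic} reads $\{x\in\dR^{d-1}\,|\,|\beta\chi_\omg(x)|>\varepsilon\}\subseteq\omg$, which has finite measure for every $\varepsilon>0$.

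Next, setting $\alpha_1\coloneqq\alpha-\alpha_0=\beta\chi_\omg$, the assumption $\beta\geq 0$ yields $\alpha_1\geq 0$ and hence $(\alpha_1)_+=\beta\chi_\omg$. The one small computation needed is to identify its symmetric decreasing rearrangement. Since
\begin{equation*}
\{x\in\dR^{d-1}\,|\,\beta\chi_\omg(x)>t\}=\begin{cases}\omg,&0\leq t<\beta,\\ \emptyset,& t\geq \beta,\end{cases}
\end{equation*}
the symmetrization of these level sets gives $\omg^*$ for $0\leq t<\beta$ and $\emptyset$ otherwise. Plugging into definition \eqref{Eq_Symmetric_decreasing_rearrangement},
\begin{equation*}
(\beta\chi_\omg)^*(x)=\int_0^\infty\chi_{\{\beta\chi_\omg>t\}^*}(x)\,\dd t=\int_0^\beta\chi_{\omg^*}(x)\,\dd t=\beta\chi_{\omg^*}(x).
\end{equation*}

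Applying Theorem~\ref{thm:main2} then yields
\begin{equation*}
\lambda_1(\beta\chi_{\omg^*})=\lambda_1(\alpha_0+(\alpha_1)_+^*)\leq \lambda_1(\alpha_0+\alpha_1)=\lambda_1(\beta\chi_\omg),
\end{equation*}
which is the desired inequality. There is no substantial obstacle: the corollary is essentially a concrete instance of Theorem~\ref{thm:main2}, and the only nontrivial step is the explicit computation of the rearrangement of $\beta\chi_\omg$, which follows immediately from the layer-cake formula used to define it.
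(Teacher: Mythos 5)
Your proposal is correct and is exactly the argument the paper intends: Corollary~\ref{qqq} is stated as a direct instance of Theorem~\ref{thm:main2} with $\alpha_0=0$, $\alpha_1=\beta\chi_\omg$, and your hypothesis check together with the layer-cake computation $(\beta\chi_\omg)^*=\beta\chi_{\omg^*}$ supplies precisely the specialization needed.
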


The proof of Theorem~\ref{thm:main2} relies on the Birman-Schwinger principle for the operator $\sfA_\aa$, by means of which the problem is reduced 
to an eigenvalue inequality for the relativistic Schr\"odinger operator in $L^2(\dR^{d-1})$. The latter is proven with the help of the fact that the symmetric decreasing rearrangement decreases the kinetic energy term corresponding to the relativistic Schr\"odinger operator; cf. Lemma~\ref{lem_Properties_rearrangement}. This property of the kinetic energy can be viewed as an analogue of the P\'{o}lya-Szeg\H{o} inequality.

\begin{lem}\label{lem_Properties_rearrangement}
For every $\lambda<0$ and nonnegative $\phi\in H^{\frac{1}{2}}(\mathbb{R}^{d-1})$ the rearrangements $(\alpha_1)_+^*,\phi^*$ 
in \eqref{Eq_Symmetric_decreasing_rearrangement} and the form \eqref{Eq_Relativistic_form} satisfy
\begin{equation}\label{Eq_Relativistic_form_estimate}
\mathfrak{d}_{\alpha_0+(\alpha_1)_+^*,\lambda}[\phi^*]\leq\mathfrak{d}_{\alpha_0+\alpha_1,\lambda}[\phi].
\end{equation}
\end{lem}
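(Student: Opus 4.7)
The strategy is to split the right and left hand sides into three pieces and compare them separately: the relativistic kinetic term $2\|(-\Delta-\lambda)^{1/4}\phi\|^2_{L^2(\mathbb{R}^{d-1})}$, the constant part $-\alpha_0\|\phi\|^2_{L^2(\mathbb{R}^{d-1})}$ and the perturbation $-\int\alpha_1|\phi|^2\,\dd x$. The constant part is identical on both sides by the $L^2$-conservation of the rearrangement (Lemma~\ref{lem:rearr}(v)), so the inequality reduces to proving that the kinetic energy decreases under rearrangement, while the potential energy contribution becomes (at most) more negative.

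For the potential term I first bound $\alpha_1\le(\alpha_1)_+$ pointwise to obtain $\int\alpha_1|\phi|^2\,\dd x\le\int(\alpha_1)_+|\phi|^2\,\dd x$. Then, using $(|\phi|^2)^*=(\phi^*)^2$ (Lemma~\ref{lem:rearr}(iv), valid since $\phi\ge 0$) together with the Hardy–Littlewood inequality (Lemma~\ref{lem:rearr}(vi)) applied to the pair $(\alpha_1)_+$ and $\phi^2$, I get
\begin{equation*}
\int_{\mathbb{R}^{d-1}}\alpha_1\,|\phi|^2\,\dd x\le\int_{\mathbb{R}^{d-1}}(\alpha_1)_+^*\,(\phi^*)^2\,\dd x,
\end{equation*}
which is the desired bound for the potential contribution. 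Note that $(\alpha_1)_+$ indeed vanishes at infinity in the sense of \eqref{Eq_Decay_property_u} thanks to the hypothesis \eqref{Eq_Decay_property_relativistic}, and $\phi\in L^2$ likewise satisfies \eqref{Eq_Decay_property_u}, so both rearrangements are well defined.

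The main analytic obstacle is the kinetic inequality
\begin{equation*}
\|(-\Delta-\lambda)^{1/4}\phi^*\|^2_{L^2(\mathbb{R}^{d-1})}\le\|(-\Delta-\lambda)^{1/4}\phi\|^2_{L^2(\mathbb{R}^{d-1})}.
\end{equation*}
My plan is to reduce it to a Riesz rearrangement inequality via Bochner subordination. Setting $\mu^2\coloneqq-\lambda>0$ and using the identity $s^{1/2}=\frac{1}{2\sqrt{\pi}}\int_0^\infty(1-e^{-st})t^{-3/2}\,\dd t$ for $s\ge 0$, I write
\begin{equation*}
(|k|^2+\mu^2)^{1/2}=\frac{1}{2\sqrt{\pi}}\int_0^\infty\bigl(1-e^{-t(|k|^2+\mu^2)}\bigr)t^{-3/2}\,\dd t,
\end{equation*}
and Plancherel then yields
\begin{equation*}
\|(-\Delta-\lambda)^{1/4}\phi\|^2_{L^2(\mathbb{R}^{d-1})}=\frac{1}{2\sqrt{\pi}}\int_0^\infty t^{-3/2}\Bigl(\|\phi\|^2_{L^2(\mathbb{R}^{d-1})}-e^{-\mu^2t}\bigl(\phi,e^{t\Delta}\phi\bigr)_{L^2(\mathbb{R}^{d-1})}\Bigr)\,\dd t.
\end{equation*}
Because $\|\phi^*\|_{L^2}=\|\phi\|_{L^2}$, it suffices to prove $(\phi,e^{t\Delta}\phi)\le(\phi^*,e^{t\Delta}\phi^*)$ for each $t>0$. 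But the heat kernel $k_t(x)=(4\pi t)^{-(d-1)/2}e^{-|x|^2/(4t)}$ is a positive, radial, symmetric decreasing function with $k_t^*=k_t$, so the classical Riesz rearrangement inequality applied to the triple $(\phi,k_t,\phi)$ gives exactly this estimate. Integrating in $t$ (everything is nonnegative, so Fubini/Tonelli is legitimate) delivers the kinetic inequality.

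Assembling the three estimates yields \eqref{Eq_Relativistic_form_estimate}. The subordination representation and the attendant Riesz step are the heart of the argument; once they are in place, the rest is bookkeeping with the standard properties of $(\cdot)^*$ from Lemma~\ref{lem:rearr}. One should also briefly observe that $\phi^*\in H^{1/2}(\mathbb{R}^{d-1})$, which follows a posteriori from the just-proved kinetic inequality (applied with $\mu^2=1$, say), so that all quantities in \eqref{Eq_Relativistic_form_estimate} are well defined and finite.
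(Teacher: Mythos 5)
Your proposal is correct and follows the paper's proof in essentially the same way: the potential term is handled exactly as in the paper (bounding $\alpha_1$ by $(\alpha_1)_+$, then using Lemma~\ref{lem:rearr}\,(iv)--(vi)), and the remaining issue is the relativistic kinetic inequality \eqref{Eq_Properties_rearrangement2}. The only difference is that where the paper simply cites \cite[Section 7.11 (5), Section 7.17 (2) and the remark afterwards]{LiLo2001} for that inequality, you reprove it via the subordination formula $s^{1/2}=\tfrac{1}{2\sqrt{\pi}}\int_0^\infty(1-e^{-st})t^{-3/2}\,\dd t$, Plancherel, and the Riesz rearrangement inequality applied to the heat kernel --- which is precisely the argument underlying the cited result, so your write-up is a self-contained version of the same proof.
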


\begin{proof}
First, in view of Lemma \ref{lem:rearr} (iv), (v) and (vi) we have
\begin{equation}\label{Eq_Properties_rearrangement1}
\int_{\dR^{d-1}}(\aa_0+\aa_1)\phi^2\dd x\leq\int_{\dR^{d-1}}(\aa_0+(\aa_1)_+)\phi^2\dd x\leq\int_{\dR^{d-1}}(\aa_0+(\aa_1)_+^*)(\phi^*)^2\dd x.
\end{equation}
Moreover, it is proven in \cite[Section 7.11 (5), Section 7.17 (2) and the remark afterwards]{LiLo2001} that
\begin{equation}\label{Eq_Properties_rearrangement2}
\big\Vert(-\Delta-\lambda)^{\frac{1}{4}}\phi^*\big\Vert^2_{L^2(\mathbb{R}^{d-1})}\leq\big\Vert(-\Delta-\lambda)^{\frac{1}{4}}\phi\big\Vert_{L^2(\mathbb{R}^{d-1})}.
\end{equation}
Combining \eqref{Eq_Properties_rearrangement1} and \eqref{Eq_Properties_rearrangement2} then proves the stated inequality \eqref{Eq_Relativistic_form_estimate}.
\end{proof}

\begin{proof}[Proof of Theorem \ref{thm:main2}]
Observe that by Theorem~\ref{thm:Essential_spectrum} and Lemma~\ref{lem:rearr}\,(v) the essential spectra of the Schrödinger operators $\sfA_{\alpha_0+\alpha_1}$ and $\sfA_{\alpha_0+(\alpha_1)_+^*}$ are given by
\begin{equation*}
\sess(\sfA_{\aa_0+\aa_1})=\sess(\sfA_{\aa_0+(\aa_1)_+^*})=\left\{\begin{array}{ll} [-\frac{\alpha_0^2}{4},\infty), & \text{if }\alpha_0\geq 0, \\ {[0,\infty)}, & \text{if }\alpha_0\leq 0. \end{array}\right.
\end{equation*}
We assume that $\aa_1$ is such that
\begin{equation*}
\lambda_1\coloneqq\lambda_1(\alpha_0+\alpha_1)<\left\{\begin{array}{ll} -\frac{\alpha_0^2}{4}, & \text{if }\alpha_0\geq 0, \\ 0, & \text{if }\alpha_0\leq 0, \end{array}\right.
\end{equation*}
as otherwise the statement of the theorem is clear. Then $\lambda_1\in\sigma_{\rm d}(\sfA_{\alpha_0+\alpha_1})$ and by Theorem \ref{thm_Uniqueness_groundstate} there exists a nonnegative eigenfunction $u_1\in\ker(\sfA_{\alpha_0+\alpha_1}-\lambda_1)$. By Proposition \ref{prop_Equivalence_of_eigenvalues}, we then have $\phi_1\coloneqq\tau_Du_1\in\ker\sfD_{\alpha_0+\alpha_1,\lambda_1}$ for the trace of the eigenfunction, and also $\phi_1\geq 0$ follows from $u_1\geq 0$. Lemma~\ref{lem:rearr}~(v) and Lemma~\ref{lem_Properties_rearrangement} give the estimate
\begin{equation*}
0=\frac{\mathfrak{d}_{\alpha_0+\alpha_1,\lambda_1}[\phi_1]}{\Vert\phi_1\Vert_{L^2(\mathbb{R}^{d-1})}^2}\geq\frac{\mathfrak{d}_{\aa_0+(\aa_1)_+^*,\lambda_1}[\phi_1^*]}{\Vert\phi_1^*\Vert^2_{L^2(\mathbb{R}^{d-1})}}\geq\mu_{\aa_0+(\aa_1)_+^*}(\lambda_1).
\end{equation*}
Since $\mu_{\aa_0+(\aa_1)_+^*}$ is nonincreasing by Lemma \ref{lem_Eigenvalue_function} it admits a zero
\begin{equation*}
\tilde{\lambda}_1\leq\lambda_1<\left\{\begin{array}{ll} -\frac{\alpha_0^2}{4}, & \text{if }\alpha_0\geq 0, \\ 0, & \text{if }\alpha_0\leq 0. \end{array}\right.
\end{equation*}
Hence $\tilde{\lambda}_1\in\sigma_{\rm d}(\sfA_{\alpha_0+(\alpha_1)_+^*})$ and we have $\lambda_1(\alpha_0+(\alpha_1)_+^*)\leq\tilde{\lambda}_1\leq\lambda_1$, which proves the theorem.
\end{proof}

\begin{bem}
We mention that the above results remain valid for Robin Laplacians on the upper half-space $\mathbb{R}^d_+$. More precisely, if 
$\sfB_\alpha$ denotes the self-adjoint operator in $L^2(\dR^d_+)$ 
associated with the densely defined, symmetric, semibounded, 
and closed form 
\begin{align*}
\mathfrak{b}_\alpha[u,v]&:=(\nabla u,\nabla v)_{L^2(\mathbb{R}^d_+,\mathbb{C}^d)}-\int_{\mathbb{R}^{d-1}}\alpha\,\tau_{\rm D}u\,\overline{\tau_{\rm D}v}\,\dd x, \\
\dom\mathfrak{b}_\alpha&:=H^1(\mathbb{R}^d_+),
\end{align*}
and we replace $\lambda_1(\alpha)=\inf\sigma(\sfA_\alpha)$ by the bottom of the spectrum $\lambda_1(\alpha)\coloneqq\inf\sigma(\sfB_\alpha)$, then Theorem~\ref{thm:main2} and Corollary~\ref{qqq} hold.
\end{bem}

\begin{bem}\label{otherbem}
	Theorem~\ref{thm:main2} can be proved differently using Steiner symmetrization; the following elegant argument was communicated to us recently.
	Consider a nonnegative function $u\colon\dR^d\arr\dR$ such that $\dR^{d-1}\ni x' \mapsto u(x',x_d)$ is vanishing at infinity for all $x_d \in\dR$.
	Following the lines of~\cite[Chapter 6]{B} we recall that the $(d-1,d)$-Steiner symmetrization $u^\sharp$ of the function $u$ is defined as	
	\[
		u^\sharp(x',x_d) := (u^*(\cdot,x_d))(x',x_d),
	\]
	where the symmetric decreasing rearrangement in the right hand side is taken for each $x_d\in\dR$ with respect to first $d-1$ variables. Let the nonnegative function $u_1\in H^1(\dR^d)$ be the normalized ground state of the operator $\sfA_{\aa_0+\aa_1}$. It is not difficult to check that $u_1$ is vanishing at infinity slice-wise in the above sense; cf.~\cite[\S 6.8]{B}. According to~\cite[Theorem 6.8]{B} we have
	\begin{equation}\label{eq:Steiner1}
		\|u_1^\sharp\|_{L^2(\dR^d)} = \|u_1\|_{L^2(\dR^d)} = 1.
	\end{equation}
	In view of~\cite[Theorem 6.19]{B}
	we get $u_1^\sharp \in H^1(\dR^d)$ and
	\begin{equation}\label{eq:Steiner2}
		\|\nabla u_1^\sharp\|_{L^2(\dR^d;\dC^d)}\le \|\nabla u_1\|_{L^2(\dR^d;\dC^d)}.
	\end{equation}
	Lemma~\ref{lem:rearr}\,(iv), (v) and (vi)
	yield
	\begin{equation}\label{eq:Steiner3}
		\int_{\dR^{d-1}}(\alpha_0+(\alpha_1)_+^*)|\tau_{\rm D}u_1^\sharp|^2\dd x \ge
		\int_{\dR^{d-1}}(\alpha_0+\alpha_1)|\tau_{\rm D}u_1|^2\dd x.  
	\end{equation}
	Finally, combining~\eqref{eq:Steiner1},~\eqref{eq:Steiner2}, and~\eqref{eq:Steiner3} we obtain by the min-max principle that
	\[
	\begin{aligned}
		\lm_1(\alpha_0+(\alpha_1)_+^*)
		&\le\fra_{\alpha_0+(\alpha_1)_+^*}[u_1^\sharp]
		= \|\nabla u_1^\sharp\|^2_{L^2(\dR^d;\dC^d)} - \int_{\dR^{d-1}}(\alpha_0+(\alpha_1)_+^*)|\tau_{\rm D}u_1^\sharp|^2\dd x\\
		& \le \fra_{\alpha_0+\alpha_1}[u_1] 
		= \lambda_1(\alpha_0+\alpha_1). 
	\end{aligned}
	\]
\end{bem}

\begin{appendix}

\section{}

In this appendix let again $\Sigma$ be a Lipschitz hypersurface as in \eqref{Eq_Sigma} and assume that $\alpha\in L^p(\Sigma)+L^\infty(\Sigma)$ for some $1<p<\infty$ in $d=2$ and for $d-1\leq p<\infty$ in $d\geq 3$ dimensions, as in \eqref{Eq_alpha}. In this setting we consider the multiplication operator
\begin{equation}\label{Eq_Malpha}
M_\alpha:H^{\frac{1}{2}}(\Sigma)\rightarrow L^2(\Sigma)\quad\text{with}\quad M_\alpha\phi\coloneqq|\alpha|^{\frac{1}{2}}\phi,\quad\phi\in H^{\frac{1}{2}}(\Sigma),
\end{equation}
which plays a crucial role in the well definedness of the form $\mathfrak{a}_\alpha$ in Proposition~\ref{prop_Closed_form} and in the derivation of the essential spectrum in Theorem~\ref{thm:Essential_spectrum}. If, in addition, \eqref{Eq_Decay_property_potential} holds, then it turns out that the operator $M_\alpha$ is compact; for 
the convenience of the reader we will provide a complete proof below. The preparatory 
estimate in Lemma~\ref{lem_Malpha_boundedness} is also used to conclude the semiboundedness of the 
form $\mathfrak{a}_\alpha$ in Proposition \ref{prop_Closed_form}. 

\medskip
 
We also want to mention that we consider Sobolev and Lebesgue spaces on the surface $\Sigma$ in the sense that for every $s>0$ and $q\in[1,\infty]$
\begin{equation}\label{Eq_Spaces_on_the_boundary}
\begin{split}
&\phi\in H^s(\Sigma)\text{ if and only if }\phi\circ\Xi\in H^s(\mathbb{R}^{d-1})\text{ and }\Vert\phi\Vert_{H^s(\Sigma)}\coloneqq\Vert\phi\circ\Xi\Vert_{H^s(\mathbb{R}^{d-1})}, \\
&\phi\in L^q(\Sigma)\text{ if and only if }\phi\circ\Xi\in L^q(\mathbb{R}^{d-1})\text{ and }\Vert\phi\Vert_{L^q(\Sigma)}\coloneqq\Vert\phi\circ\Xi\Vert_{L^q(\mathbb{R}^{d-1})},
\end{split}
\end{equation}
where $\Xi(x)\coloneqq(x,\xi(x))$ is a bijective map from $\mathbb{R}^{d-1}$ onto $\Sigma$.

\begin{lem}\label{lem_Malpha_boundedness}
For every $\varepsilon>0$ there exists some $c_\varepsilon\geq 0$, depending on $\alpha$, such that
\begin{equation}\label{Eq_Malpha_boundedness}
\Vert M_\alpha\phi\Vert^2_{L^2(\Sigma)}\leq\varepsilon^2\Vert\phi\Vert^2_{H^{\frac{1}{2}}(\Sigma)}+c_\varepsilon^2\Vert\phi\Vert^2_{L^2(\Sigma)},\quad\phi\in H^{\frac{1}{2}}(\Sigma).
\end{equation}
\end{lem}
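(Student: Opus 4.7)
The plan is to split $\alpha$ according to~\eqref{Eq_alpha} into an $L^p$ and an $L^\infty$ piece, dispose of the bounded piece trivially, estimate the $L^p$ piece with Hölder's inequality combined with a Sobolev embedding on $\Sigma$, and finally use a truncation argument on the $L^p$ piece to push the coefficient in front of $\|\phi\|_{H^{1/2}(\Sigma)}^2$ below any prescribed $\varepsilon^2$.

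First I would fix a decomposition $\alpha=\alpha_p+\alpha_\infty$ with $\alpha_p\in L^p(\Sigma)$ and $\alpha_\infty\in L^\infty(\Sigma)$, which gives
\[
\|M_\alpha\phi\|^2_{L^2(\Sigma)}
=\int_\Sigma|\alpha|\,|\phi|^2\,\dd x
\le\int_\Sigma|\alpha_p|\,|\phi|^2\,\dd x+\|\alpha_\infty\|_{L^\infty(\Sigma)}\,\|\phi\|^2_{L^2(\Sigma)}.
\]
The $\alpha_\infty$-term is already of the desired form and can be absorbed into $c_\varepsilon^2\|\phi\|_{L^2(\Sigma)}^2$. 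To the remaining integral I would apply Hölder's inequality with conjugate exponents $p$ and $p'=p/(p-1)$,
\[
\int_\Sigma|\alpha_p|\,|\phi|^2\,\dd x\le\|\alpha_p\|_{L^p(\Sigma)}\,\|\phi\|_{L^{2p/(p-1)}(\Sigma)}^2.
\]

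The key analytic input is the Sobolev embedding $H^{1/2}(\mathbb{R}^{d-1})\hookrightarrow L^q(\mathbb{R}^{d-1})$, which holds for every $q\in[2,\infty)$ when $d=2$ (the borderline case $n=2s$) and for every $q\in[2,\,2(d-1)/(d-2)]$ when $d\ge 3$. A short computation shows $2p/(p-1)\le 2(d-1)/(d-2)\iff p\ge d-1$, so the assumption on $p$ in~\eqref{Eq_alpha} places $2p/(p-1)$ precisely inside the admissible range in both dimensional regimes. Since the map $\Xi(x)=(x,\xi(x))$ is bi-Lipschitz from $\mathbb{R}^{d-1}$ onto $\Sigma$, the convention~\eqref{Eq_Spaces_on_the_boundary} transfers this embedding to the surface and produces a constant $C>0$ with $\|\phi\|^2_{L^{2p/(p-1)}(\Sigma)}\le C\,\|\phi\|^2_{H^{1/2}(\Sigma)}$ for every $\phi\in H^{1/2}(\Sigma)$.

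To reduce the fixed prefactor $C\,\|\alpha_p\|_{L^p(\Sigma)}$ to an arbitrary $\varepsilon^2$, I would truncate at height $M>0$: write $\alpha_p=\beta_M+\gamma_M$ with $\beta_M\coloneqq\alpha_p\chi_{\{|\alpha_p|\le M\}}\in L^\infty(\Sigma)$ and $\gamma_M\coloneqq\alpha_p\chi_{\{|\alpha_p|>M\}}\in L^p(\Sigma)$. Dominated convergence yields $\|\gamma_M\|_{L^p(\Sigma)}\to 0$ as $M\to\infty$, so for prescribed $\varepsilon>0$ I may choose $M$ so large that $C\,\|\gamma_M\|_{L^p(\Sigma)}\le\varepsilon^2$. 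Applying the Hölder/Sobolev chain to $\gamma_M$ and bounding the $\beta_M$-contribution by $M\,\|\phi\|^2_{L^2(\Sigma)}$ then assembles~\eqref{Eq_Malpha_boundedness} with $c_\varepsilon^2=\|\alpha_\infty\|_{L^\infty(\Sigma)}+M$. The only delicate bookkeeping point is matching the dimensional assumption $p\ge d-1$ (resp.\ $p>1$ when $d=2$) with the critical Sobolev exponent $2p/(p-1)$; the rest of the argument is routine.
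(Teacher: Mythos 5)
Your proposal is correct and follows essentially the same route as the paper's proof: split $\alpha$ into an $L^p$ and an $L^\infty$ part, truncate the $L^p$ part so that its unbounded tail has $L^p$-norm below a prescribed threshold, estimate that tail via H\"older's inequality together with the Sobolev embedding $H^{1/2}\hookrightarrow L^{2p/(p-1)}$ transferred to $\Sigma$ through the pullback definition \eqref{Eq_Spaces_on_the_boundary}, and absorb all bounded contributions into $c_\varepsilon^2\Vert\phi\Vert^2_{L^2(\Sigma)}$. Your explicit verification that $2p/(p-1)$ lies in the admissible Sobolev range under the hypotheses on $p$ is a detail the paper leaves implicit, but the argument is the same.
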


\begin{proof}
We decompose $\alpha\in L^p(\Sigma)+L^\infty(\Sigma)$ into
\begin{equation*}
\alpha=\beta+\gamma,\quad\beta\in L^p(\Sigma),\,\gamma\in L^\infty(\Sigma).
\end{equation*}
Fix $\varepsilon>0$.
Then the integrability condition $\beta\in L^p(\Sigma)$ ensures the existence of some $C_\varepsilon\geq 0$ such that $\beta=\beta_1+\beta_2$, where
\begin{equation*}
\beta_1(x)\coloneqq\left\{\begin{array}{ll} 0, & |\beta(x)|\leq C_\varepsilon, \\ \beta(x), & |\beta(x)|>C_\varepsilon, \end{array}\right.\quad\text{and}\quad\beta_2(x)\coloneqq\left\{\begin{array}{ll} \beta(x), & |\beta(x)|\leq C_\varepsilon, \\ 0, & |\beta(x)|>C_\varepsilon, \end{array}\right. 
\end{equation*}
and
\begin{equation}\label{Eq_Potential_decomposition1}
\Vert\beta_1\Vert_{L^p(\Sigma)}\leq\varepsilon^2.
\end{equation}
We now split $\alpha=\beta_1+(\beta_2+\gamma)$ into a bounded part $\beta_2+\gamma$ and an unbounded remainder $\beta_1$ and estimate both parts separately. For $\beta_1$ we use Hölder's inequality and the estimate \eqref{Eq_Potential_decomposition1} to get
\begin{equation}\label{Eq_beta1}
\big\Vert|\beta_1|^{\frac{1}{2}}\phi\big\Vert^2_{L^2(\Sigma)}\leq\Vert\beta_1\Vert_{L^p(\Sigma)}\Vert\phi\Vert^2_{L^{\frac{2p}{p-1}}(\Sigma)}\leq\varepsilon^2\Vert\phi\Vert^2_{L^{\frac{2p}{p-1}}(\Sigma)}\leq\varepsilon^2c_E^2\Vert\phi\Vert^2_{H^{\frac{1}{2}}(\Sigma)},
\end{equation}
where in the last inequality we additionally used the Sobolev embedding on the surface $\Vert\cdot\Vert_{L^{\frac{2p}{p-1}}(\Sigma)}\leq c_E\Vert\cdot\Vert_{H^{\frac{1}{2}}(\Sigma)}$, which follows from the classical Sobolev embedding theorem \cite[Theorem 8.12.6]{Bh2012} on $\mathbb{R}^{d-1}$ and the definition of the Sobolev and Lebesgue norms in \eqref{Eq_Spaces_on_the_boundary}.

\medskip

On the other hand, $\beta_2+\gamma$ can simply be estimated by
\begin{equation}\label{Eq_beta2}
\big\Vert|\beta_2+\gamma|^{\frac{1}{2}}\phi\big\Vert^2_{L^2(\Sigma)}\leq\big(C_\varepsilon+\Vert\gamma\Vert_{L^\infty(\Sigma)}\big)\Vert\phi\Vert^2_{L^2(\Sigma)}.
\end{equation}
Now the estimate \eqref{Eq_Malpha_boundedness} follows from \eqref{Eq_beta1} and \eqref{Eq_beta2}.
\end{proof}

The next lemma treats the transition from weak $H^\frac{1}{2}$-convergence on $\Sigma$ to strong $L^2$-convergence on subsets of finite measure of $\Sigma$; this observation is 
preparatory for the compactness result in Proposition~\ref{prop_Malpha_compactness}.

\begin{lem}\label{lem_Convergence_on_a_set_of_finite_measure}
For every $\phi_0,(\phi_n)_n\in H^{\frac{1}{2}}(\Sigma)$, the convergence
\begin{equation}\label{Eq_Weak_convergence}
\phi_n\rightharpoonup\phi_0\quad\text{weakly in }H^{\frac{1}{2}}(\Sigma),
\end{equation}
implies for any Borel set $A\subseteq\Sigma$ with finite measure, the convergence
\begin{equation}\label{Eq_Convergence_on_sets_of_finite_measure}
\phi_n\rightarrow\phi_0\quad\text{strongly in }L^2(A).
\end{equation}
\end{lem}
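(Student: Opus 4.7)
The plan is to transfer the problem from $\Sigma$ to $\dR^{d-1}$ via the bi-Lipschitz parametrization $\Xi(x)=(x,\xi(x))$, using the fact that Sobolev and Lebesgue norms on $\Sigma$ are defined through the pullback in \eqref{Eq_Spaces_on_the_boundary}. After this reduction, setting $\psi_n \coloneqq (\phi_n-\phi_0)\circ\Xi$ and $A' \coloneqq \Xi^{-1}(A)\subseteq\dR^{d-1}$, the hypothesis \eqref{Eq_Weak_convergence} becomes $\psi_n\rightharpoonup 0$ weakly in $H^{1/2}(\dR^{d-1})$, while the conclusion \eqref{Eq_Convergence_on_sets_of_finite_measure} reduces to $\psi_n\to 0$ strongly in $L^2(A')$, and $|A'|<\infty$.

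To establish this, I would split $A'$ into a bounded part and a part of small measure. Weak convergence in $H^{1/2}(\dR^{d-1})$ implies norm-boundedness by the uniform boundedness principle, and the same Sobolev embedding that enters the proof of Lemma~\ref{lem_Malpha_boundedness} (which gives $H^{1/2}(\dR^{d-1})\hookrightarrow L^q(\dR^{d-1})$ for some $q>2$ via \cite[Theorem~8.12.6]{Bh2012}) then furnishes a uniform bound $\|\psi_n\|_{L^q(\dR^{d-1})}\le K$. For each $R>0$, H\"older's inequality on $A'\setminus B_R$ gives
\[
\|\psi_n\|^2_{L^2(A'\setminus B_R)}\le K^2\,|A'\setminus B_R|^{1-2/q},
\]
which can be made uniformly small in $n$ by taking $R$ large enough, since $|A'|<\infty$ forces $|A'\setminus B_R|\to 0$ as $R\to\infty$. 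On the bounded region $B_R$, the compact embedding $H^{1/2}(B_R)\hookrightarrow L^2(B_R)$, a fractional-order Rellich--Kondrachov theorem, turns weak $H^{1/2}$-convergence into strong $L^2$-convergence, so $\|\psi_n\|_{L^2(A'\cap B_R)}\to 0$ as $n\to\infty$. Combining these two estimates along the disjoint decomposition $A'=(A'\cap B_R)\cup(A'\setminus B_R)$ yields the claim.

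The only step requiring some care is the fractional Rellich embedding on the bounded Lipschitz domain $B_R$; I would invoke this as a standard fact from the literature, noting that it also follows from the interpolation identity $H^{1/2}(B_R)=[L^2(B_R),H^1(B_R)]_{1/2}$ combined with the classical Rellich embedding $H^1(B_R)\hookrightarrow L^2(B_R)$. Everything else reduces to routine estimates once the change of variables via $\Xi$ is in place.
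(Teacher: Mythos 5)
Your argument is correct, but it is a genuinely different route from the one in the paper. You reduce to $\dR^{d-1}$ via $\Xi$ first (the paper does this transfer as its second step, using exactly the definition \eqref{Eq_Spaces_on_the_boundary}), and then split $A'=(A'\cap B_R)\cup(A'\setminus B_R)$: on the tail you combine the uniform bound from weak convergence with the embedding $H^{1/2}(\dR^{d-1})\hookrightarrow L^q(\dR^{d-1})$, $q>2$ (the same embedding used in Lemma~\ref{lem_Malpha_boundedness}), H\"older's inequality and $|A'\setminus B_R|\to 0$; on the ball you invoke the compactness of $H^{1/2}(B_R)\hookrightarrow L^2(B_R)$ to upgrade weak to strong convergence. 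The paper instead avoids any compact embedding theorem: it mollifies with the Gaussian kernel $\varphi_t$, obtains pointwise convergence of $\varphi_t*\phi_n$ from the duality pairing $H^{-1/2}\times H^{1/2}$, uses dominated convergence on the finite-measure set $A$ to get $\Vert\varphi_t*(\phi_n-\phi_0)\Vert_{L^2(A)}\to 0$, and controls the mollification error uniformly in $n$ by the Fourier-side estimate $\Vert\phi-\varphi_t*\phi\Vert_{L^2}\le c\,t^{1/4}\Vert\phi\Vert_{H^{1/2}}$, letting $t\downarrow 0$ at the end. Both proofs exploit the finiteness of $|A|$ at the decisive point (you through the H\"older tail bound, the paper through dominated convergence). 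Your version is shorter and modular, but it imports the fractional Rellich--Kondrachov theorem as a black box; if you justify it by interpolation, note that compactness does not interpolate in general for the complex method, though here one is safe because the target space is fixed (Lions--Peetre), or one can simply cite the fractional compact embedding directly. The paper's mollifier argument is longer but entirely self-contained, using only the Fourier transform, boundedness of weakly convergent sequences, and dominated convergence.
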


\begin{proof}
In \textit{Step 1} we consider the hyperplane case $\Sigma=\mathbb{R}^{d-1}\times\{0\}\cong\mathbb{R}^{d-1}$. For every $t>0$, we define the mollifier
\begin{equation}\label{Eq_Mollifier}
\varphi_t(x)\coloneqq\frac{1}{(4\pi t)^{\frac{d-1}{2}}}e^{-\frac{|x|^2}{4t}},\quad x\in\mathbb{R}^{d-1}.
\end{equation}
Then by the weak convergence \eqref{Eq_Weak_convergence}, we conclude the pointwise convergence of the convolution
\begin{equation}\label{Eq_Convergence_on_a_set_of_finite_measure_6}
\begin{split}
\lim\limits_{n\rightarrow\infty}(\varphi_t*\phi_n)(x)&=\lim\limits_{n\rightarrow\infty}\big<\varphi_t(x-\,\cdot\,),\phi_n\big>_{H^{-\frac{1}{2}}(\mathbb{R}^{d-1})\times H^{\frac{1}{2}}(\mathbb{R}^{d-1})} \\
&=\big<\varphi_t(x-\,\cdot\,),\phi_0\big>_{H^{-\frac{1}{2}}(\mathbb{R}^{d-1})\times H^{\frac{1}{2}}(\mathbb{R}^{d-1})}=(\varphi_t*\phi_0)(x).
\end{split}
\end{equation}
Since the weakly convergent sequence $(\phi_n)_n$ is bounded, i.e. $\Vert\phi_n\Vert_{H^{\frac{1}{2}}(\mathbb{R}^{d-1})}\leq M$ for some $M\geq 0$, 
we also conclude the uniform boundedness of the convolution
\begin{equation}\label{Eq_Convergence_on_a_set_of_finite_measure_2}
|(\varphi_t*\phi_n)(x)|\leq\Vert\varphi_t\Vert_{L^2(\mathbb{R}^{d-1})}\Vert\phi_n\Vert_{L^2(\mathbb{R}^{d-1})}\leq M\Vert\varphi_t\Vert_{L^2(\mathbb{R}^{d-1})},
\end{equation}
for every $x\in\mathbb{R}^{d-1}$, $n\in\mathbb{N}$. Since $A$ is a set of finite measure, \eqref{Eq_Convergence_on_a_set_of_finite_measure_6} \& \eqref{Eq_Convergence_on_a_set_of_finite_measure_2} are sufficient to apply the dominated convergence theorem, which leads to the norm convergence
\begin{equation}\label{Eq_Convergence_on_a_set_of_finite_measure_3}
\lim\limits_{n\rightarrow\infty}\Vert\varphi_t*(\phi_n-\phi_0)\Vert_{L^2(A)}=0.
\end{equation}
For the Fourier transform of the mollifier \eqref{Eq_Mollifier} we have
\begin{align*}
(\mathcal{F}\varphi_t)(k)&=\frac{1}{(2\pi)^{\frac{d-1}{2}}}\int_{\mathbb{R}^{d-1}}e^{-ikx}\varphi_t(x)\dd x=\frac{1}{(8\pi^2t)^{\frac{d-1}{2}}}\int_{\mathbb{R}^{d-1}}e^{-ikx}e^{-\frac{|x|^2}{4t}}\dd x \\
&=\frac{1}{(8\pi^2t)^{\frac{d-1}{2}}}e^{-t|k|^2}\int_{\mathbb{R}^{d-1}}e^{-\frac{(x+2itk)^2}{4t}}\dd x=\frac{1}{(2\pi)^{\frac{d-1}{2}}}e^{-t|k|^2},\quad k\in\mathbb{R}^{d-1},
\end{align*}
and we use the estimate
\begin{equation*}
\big|1-(2\pi)^{\frac{d-1}{2}}(\mathcal{F}\varphi_t)(k)\big|=1-e^{-t|k|^2}\leq c(t|k|^2)^{\frac{1}{4}}\leq ct^{\frac{1}{4}}(1+|k|^2)^{\frac{1}{4}},\quad k\in\mathbb{R}^{d-1},
\end{equation*}
where $c\coloneqq\sup_{y>0} (1-e^{-y}) y^{-\frac{1}{4}}$. Since the Fourier transform of the convolution can be written as the product $\mathcal{F}(\varphi_t*\phi_n)=(2\pi)^{\frac{d-1}{2}}(\mathcal{F}\varphi_t)(\mathcal{F}\phi_n)$, we can estimate the $L^2$-norm
\begin{equation}\label{Eq_Convergence_on_a_set_of_finite_measure_4}
\begin{split}
\Vert\phi_n-\varphi_t*\phi_n\Vert_{L^2(\mathbb{R}^{d-1})}&=\big\Vert\big(1-(2\pi)^{\frac{d-1}{2}}\mathcal{F}\varphi_t\big)\mathcal{F}\phi_n\big\Vert_{L^2(\mathbb{R}^{d-1})}  \\
&\leq ct^{\frac{1}{4}}\big\Vert(1+|\cdot|^2)^{\frac{1}{4}}\mathcal{F}\phi_n\big\Vert_{L^2(\mathbb{R}^{d-1})} \\
&=ct^{\frac{1}{4}}\Vert\phi_n\Vert_{H^{\frac{1}{2}}(\mathbb{R}^{d-1})}. 
\end{split}
\end{equation}
The inequality \eqref{Eq_Convergence_on_a_set_of_finite_measure_4} of course also holds with $\phi_n$ replaced by $\phi_0$, which leads to the estimate
\begin{equation*}
\Vert\phi_n-\phi_0\Vert_{L^2(A)}\leq ct^{\frac{1}{4}} M+\Vert\varphi_t*(\phi_n-\phi_0)\Vert_{L^2(A)}+ct^{\frac{1}{4}}\Vert\phi_0\Vert_{H^{\frac{1}{2}}(\mathbb{R}^{d-1})},
\end{equation*}
for every $n\in\mathbb{N}$ and $t>0$. The first and third term can be made arbitrary small by the choice of $t>0$ and the second term converges by \eqref{Eq_Convergence_on_a_set_of_finite_measure_3}. This proves the statement of the lemma for $\Sigma\cong\mathbb{R}^{d-1}\times\{0\}$.

\medskip

In \textit{Step 2} we consider the general case of a Lipschitz graph $\Sigma$. By the definition of the boundary spaces \eqref{Eq_Spaces_on_the_boundary}, it follows immediately from the weak convergence \eqref{Eq_Weak_convergence}, that also
\begin{equation*}
\phi_n\circ\Xi\rightharpoonup\phi_0\circ\Xi\quad\text{weakly in }H^{\frac{1}{2}}(\mathbb{R}^{d-1}).
\end{equation*}
Since $A$ has finite measure, the preimage $\Xi^{-1}(A)=\{x\in\mathbb{R}^{d-1} \,|\, \Xi(x)\in A\}$ has finite measure as well, and we conclude from the first step 
\begin{equation*}
\phi_n\circ\Xi\rightarrow\phi_0\circ\Xi\quad\text{strongly in }L^2(\Xi^{-1}(A)).
\end{equation*}
By the definition of the boundary spaces \eqref{Eq_Spaces_on_the_boundary} this implies \eqref{Eq_Convergence_on_sets_of_finite_measure}.
\end{proof}

Next we prove the compactness of the multiplication operator $M_\alpha$ under the decay property \eqref{Eq_Decay_property} of the function $\alpha$. Note that, although stated for $\alpha$, this decay property only affects the $L^\infty$-part of $\alpha$. Any function in $L^p(\mathbb{R}^{d-1})$ satisfies \eqref{Eq_Decay_property} automatically. 

\begin{prop}\label{prop_Malpha_compactness}
Assume that the function  $\alpha$ satisfies
\begin{equation}\label{Eq_Decay_property}
\Set{x\in\Sigma | \vert\alpha(x)\vert>\varepsilon}\text{ has finite measure for every }\varepsilon>0.
\end{equation}
Then the multiplication operator $M_\alpha$ in \eqref{Eq_Malpha} is compact.
\end{prop}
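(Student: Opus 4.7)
The plan is to show that for any bounded sequence $(\phi_n)_n\subset H^{1/2}(\Sigma)$ the image $(|\alpha|^{1/2}\phi_n)_n$ has a strongly convergent subsequence in $L^2(\Sigma)$. First I would invoke the Banach--Alaoglu theorem to pass to a subsequence with $\phi_n\rightharpoonup\phi_0$ weakly in $H^{1/2}(\Sigma)$, thereby reducing the problem to showing $\||\alpha|^{1/2}(\phi_n-\phi_0)\|_{L^2(\Sigma)}\to 0$.

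My strategy is to partition $\Sigma$ according to the size of $|\alpha|$. Fixing $\delta>0$ and choosing a small $\varepsilon>0$ and a large $K>0$, I would split $\Sigma$ into the three measurable sets $\{|\alpha|\leq\varepsilon\}$, $\{\varepsilon<|\alpha|\leq K\}$, and $\{|\alpha|>K\}$, and estimate the integral $\int|\alpha|\,|\phi_n-\phi_0|^2\dd x$ on each piece. On $\{|\alpha|\leq\varepsilon\}$ the integrand is bounded by $\varepsilon|\phi_n-\phi_0|^2$, hence the contribution is controlled by $\varepsilon\|\phi_n-\phi_0\|^2_{L^2(\Sigma)}$, which is uniformly small since $H^{1/2}(\Sigma)\hookrightarrow L^2(\Sigma)$ and $(\phi_n)_n$ is bounded. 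On $\{|\alpha|>K\}$ I would apply H\"older's inequality together with the Sobolev embedding $H^{1/2}(\Sigma)\hookrightarrow L^{2p/(p-1)}(\Sigma)$, valid precisely under the standing assumptions $1<p<\infty$ for $d=2$ and $p\geq d-1$ for $d\geq 3$ (as used already in the proof of Lemma~\ref{lem_Malpha_boundedness}), to bound the contribution by $\|\alpha\chi_{\{|\alpha|>K\}}\|_{L^p(\Sigma)}\|\phi_n-\phi_0\|^2_{H^{1/2}(\Sigma)}$. On the middle annular set $A\coloneqq\{\varepsilon<|\alpha|\leq K\}$, which has finite measure by the decay hypothesis \eqref{Eq_Decay_property}, the integrand is pointwise bounded by $K|\phi_n-\phi_0|^2$ and Lemma~\ref{lem_Convergence_on_a_set_of_finite_measure} provides the strong $L^2(A)$-convergence needed to drive the contribution to zero as $n\to\infty$.

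The main obstacle will be to verify the decay $\|\alpha\chi_{\{|\alpha|>K\}}\|_{L^p(\Sigma)}\to 0$ as $K\to\infty$; this is not automatic because, in the decomposition $\alpha=\beta+\gamma$ with $\beta\in L^p(\Sigma)$ and $\gamma\in L^\infty(\Sigma)$, the $L^\infty$-part $\gamma$ need not lie in $L^p(\Sigma)$. Here the decay assumption \eqref{Eq_Decay_property} does the heavy lifting: for $K>\|\gamma\|_{L^\infty(\Sigma)}$ the inclusion $\{|\alpha|>K\}\subseteq\{|\beta|>K-\|\gamma\|_{L^\infty(\Sigma)}\}$ combined with the bound $|\alpha|\leq|\beta|+\|\gamma\|_{L^\infty(\Sigma)}$ yields an estimate of the form
\[
\|\alpha\chi_{\{|\alpha|>K\}}\|_{L^p(\Sigma)}\leq\|\beta\chi_{\{|\beta|>K-\|\gamma\|_{L^\infty(\Sigma)}\}}\|_{L^p(\Sigma)}+\|\gamma\|_{L^\infty(\Sigma)}|\{|\alpha|>K\}|^{1/p},
\]
where the first term tends to zero by dominated convergence (since $\beta\in L^p(\Sigma)$) and the second because $|\{|\alpha|>K\}|\to 0$ by Markov's inequality applied to $\beta$. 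Combining the three estimates gives $\||\alpha|^{1/2}(\phi_n-\phi_0)\|_{L^2(\Sigma)}<\delta$ for $n$ large, which establishes the compactness of $M_\alpha$.
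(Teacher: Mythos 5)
Your argument is correct and rests on the same pillars as the paper's proof: the weak-to-strong continuity criterion for compactness, the H\"older--Sobolev estimate already exploited in Lemma~\ref{lem_Malpha_boundedness} (the embedding $H^{1/2}(\Sigma)\hookrightarrow L^{2p/(p-1)}(\Sigma)$ is indeed valid in the stated range of $p$), and the strong $L^2$-convergence on sets of finite measure from Lemma~\ref{lem_Convergence_on_a_set_of_finite_measure}. The difference lies in how the splitting is organized. The paper splits the \emph{function}: it writes $\alpha=\beta_1+\beta_2+\gamma$ with $\Vert\beta_1\Vert_{L^p(\Sigma)}\leq\varepsilon^2$ and $\beta_2+\gamma$ bounded, and then works on the set $A_\varepsilon=\{|\beta_2|>\varepsilon^2\}\cup\{|\gamma|>\varepsilon^2\}$, which requires a short indirect argument (via the inclusion $\{|\gamma|>\varepsilon^2\}\subseteq\{|\beta|>\tfrac{\varepsilon^2}{2}\}\cup\{|\alpha|>\tfrac{\varepsilon^2}{2}\}$ together with \eqref{Eq_Decay_property}) to see that $A_\varepsilon$ has finite measure. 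You instead split the \emph{domain} by level sets of $|\alpha|$: the finite measure of the middle region $\{\varepsilon<|\alpha|\leq K\}$ is then immediate from \eqref{Eq_Decay_property}, but you must supply the extra tail estimate $\Vert\alpha\chi_{\{|\alpha|>K\}}\Vert_{L^p(\Sigma)}\to 0$ as $K\to\infty$, which your Markov/dominated-convergence argument based on the decomposition $\alpha=\beta+\gamma$ handles correctly; it is worth noting that this step uses only $\alpha\in L^p(\Sigma)+L^\infty(\Sigma)$ and not the decay hypothesis, consistent with the paper's remark that \eqref{Eq_Decay_property} effectively constrains only the $L^\infty$-part. Your three regions correspond precisely to the paper's three terms ($\{|\alpha|>K\}$ to the small-$L^p$ piece $\beta_1$, the middle region to $A_\varepsilon$, and $\{|\alpha|\leq\varepsilon\}$ to $\Sigma\setminus A_\varepsilon$), and the order of quantifiers is sound because the weakly convergent sequence is bounded in $H^{1/2}(\Sigma)$, so $\varepsilon$ and $K$ can be fixed uniformly in $n$ before letting $n\to\infty$.
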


\begin{proof}
From Lemma \ref{lem_Malpha_boundedness} we conclude that
$M_\alpha$ in \eqref{Eq_Malpha} is an everywhere defined and bounded operator. In order to prove that $M_\alpha$ is compact, we verify that for any sequence $\phi_n\rightharpoonup\phi_0$ weakly in $H^{\frac{1}{2}}(\Sigma)$, the sequence $M_\alpha\phi_n\rightarrow M_\alpha\phi_0$ converges strongly in $L^2(\Sigma)$. As in the proof of Lemma \ref{lem_Malpha_boundedness}, let $\varepsilon>0$ and decompose the potential into
\begin{equation*}
\alpha=\beta_1+\beta_2+\gamma.
\end{equation*}
Next, we define the set
\begin{equation}\label{Eq_Convergence_property_of_the_potential_2}
A_\varepsilon\coloneqq\Set{x\in\Sigma | \vert\beta_2(x)\vert>\varepsilon^2}\cup\Set{x\in\Sigma | \vert\gamma(x)\vert>\varepsilon^2}.
\end{equation}
The integrability condition $\beta_2\in L^p(\Sigma)$ implies that the set $\{|\beta_2|>\varepsilon^2\}$ has finite measure. Furthermore, since 
$\{|\gamma|>\varepsilon^2\}\subseteq\{|\beta|>\frac{\varepsilon^2}{2}\}\cup\{|\alpha|>\frac{\varepsilon^2}{2}\}$ it follows from the integrability condition 
$\beta\in L^p(\Sigma)$ and the decay property \eqref{Eq_Decay_property} that $\{|\gamma|>\varepsilon^2\}$ also has finite measure. 
Then Lemma~\ref{lem_Convergence_on_a_set_of_finite_measure} shows
\begin{equation*}
\lim\limits_{n\rightarrow\infty}\Vert\phi_n-\phi_0\Vert_{L^2(A_\varepsilon)}=0.
\end{equation*}
This convergence in particular gives an index $N_\varepsilon\in\mathbb{N}$, such that
\begin{equation}\label{Eq_Convergence_property_of_the_potential_4}
\Vert\phi_n-\phi_0\Vert^2_{L^2(A_\varepsilon)}\leq\frac{\varepsilon^2}{C_\varepsilon+\Vert\gamma\Vert_{L^\infty(\Sigma)}},\quad n\geq N_\varepsilon,
\end{equation}
with $C_\varepsilon$ the cut-off from \eqref{Eq_Potential_decomposition1}. Then the equations \eqref{Eq_beta1} \& \eqref{Eq_Convergence_property_of_the_potential_4}, as well as the fact 
that $|\beta_2+\gamma|\leq C_\varepsilon+\Vert\gamma\Vert_{L^\infty(\Sigma)}$ on $\Sigma$ and $|\beta_2+\gamma|\leq 2 \varepsilon^2$ on $\Sigma\setminus A_\varepsilon$, we can estimate
\begin{align*}
\big\Vert|\alpha|^{\frac{1}{2}}(\phi_n-\phi_0)\big\Vert^2_{L^2(\Sigma)}&\leq\big\Vert|\beta_1|^{\frac{1}{2}}(\phi_n-\phi_0)\big\Vert^2_{L^2(\Sigma)}+\big\Vert|\beta_2+\gamma|^{\frac{1}{2}}(\phi_n-\phi_0)\big\Vert^2_{L^2(A_\varepsilon)} \\
&\quad+\big\Vert|\beta_2+\gamma|^{\frac{1}{2}}(\phi_n-\phi_0)\big\Vert^2_{L^2(\Sigma\setminus A_\varepsilon)} \\
&\leq\varepsilon^2c_E^2\Vert\phi_n-\phi_0\Vert^2_{H^{\frac{1}{2}}(\Sigma)}+\varepsilon^2+2\varepsilon^2\Vert\phi_n-\phi_0\Vert^2_{L^2(\Sigma\setminus A_\varepsilon)} \\
&\leq\varepsilon^2\big((c_E^2+2)\Vert\phi_n-\phi_0\Vert^2_{H^{\frac{1}{2}}(\Sigma)}+1\big),\quad n\geq N_\varepsilon.
\end{align*}
Since $\Vert\phi_n-\phi_0\Vert_{H^{\frac{1}{2}}(\Sigma)}$ on the right hand side is bounded as a consequence of the weak $H^{\frac{1}{2}}$-convergence, this inequality implies the norm convergence
\begin{equation*}
\lim\limits_{n\rightarrow\infty}\big\Vert|\alpha|^{\frac{1}{2}}(\phi_n-\phi_0)\big\Vert^2_{L^2(\Sigma)}=0,
\end{equation*}
and hence the compactness of the operator $M_\alpha$.
\end{proof}

\end{appendix}

\end{document}